\newcommand{\frto}[2]{
\begin{smallmatrix}
#2\\
#1
\end{smallmatrix}
}
\newcommand{\conca}{}
\DeclarePairedDelimiter\floor{\lfloor}{\rfloor}
\newcommand{\prfn}[2]{{\ensuremath {\Pi_{#1} \text{-} R ^{#2}}}\xspace}
\newcommand{\W}{\mathbb{W}}
\newcommand{\la}{\langle}
\newcommand{\ra}{\rangle}
\DeclareMathOperator{\ea}{EA}
\DeclareMathOperator{\con}{Con}
\DeclareMathOperator{\ewd}{EWD}
\DeclareMathOperator{\rfn}{RFN}
\DeclareMathOperator{\bgl}{GL}
\DeclareMathOperator{\glp}{GLP}
\DeclareMathOperator{\aca}{ACA}
\DeclareMathOperator{\is}{I\Sigma}
\DeclareMathOperator{\pa}{PA}
\DeclareMathOperator{\monid}{\top}
\newcommand{\w}{\mathbb{W}}
\newcommand{\utb}{\ensuremath{\textup{UTB}}\xspace}
\DeclareFontFamily{OMX}{MnSymbolE}{}
\DeclareSymbolFont{MnLargeSymbols}{OMX}{MnSymbolE}{m}{n}
\DeclareFontShape{OMX}{MnSymbolE}{m}{n}{
	<-6>  MnSymbolE5
	<6-7>  MnSymbolE6
	<7-8>  MnSymbolE7
	<8-9>  MnSymbolE8
	<9-10> MnSymbolE9
	<10-12> MnSymbolE10
	<12->   MnSymbolE12
}{}
\DeclareFontShape{OMX}{MnSymbolE}{b}{n}{
	<-6>  MnSymbolE-Bold5
	<6-7>  MnSymbolE-Bold6
	<7-8>  MnSymbolE-Bold7
	<8-9>  MnSymbolE-Bold8
	<9-10> MnSymbolE-Bold9
	<10-12> MnSymbolE-Bold10
	<12->   MnSymbolE-Bold12
}{}
\DeclareMathDelimiter{\ulcorner}
{\mathopen}{MnLargeSymbols}{'036}{MnLargeSymbols}{'036}
\DeclareMathDelimiter{\urcorner}
{\mathclose}{MnLargeSymbols}{'043}{MnLargeSymbols}{'043}
\newcommand{\dotminus}{\mathbin{\text{\@dotminus}}}
\newcommand{\@dotminus}{%
	\ooalign{\hidewidth\raise1ex\hbox{.}\hidewidth\cr$\m@th-$\cr}%
}
\newtheorem{rem}{Remark}
\newtheorem{notation}{Notation}
\begin{document}
\title{Arithmetical and Hyperarithmetical Worm Battles\thanks{Partially supported by the FWO-FWF Lead Agency Grant G030620N and the Spanish Ministry of Science and Innovation, Grant PID2019-107667GB-I00.}}
%
%
\author{David Fern\'andez-Duque\inst{1}\orcidID{0000-0001-8604-4183} 
 \and
Joost J. Joosten\inst{2}\orcidID{0000-0001-9590-5045}
\and
Fedor Pakhomov\inst{3}\orcidID{0000-0002-9629-9259}
\and
Konstantinos Papafilippou\inst{4}\orcidID{0000-0002-2831-0575}
\and Andreas Weierman\inst{5}\orcidID{0000-0002-5561-5323}}
\authorrunning{Fern\'andez-Duque et al.}
%
\institute{Department of Mathematics WE16, Ghent University, Ghent, Belgium\\
\email{David.FernandezDuque@UGent.be} \and
Department of Philosophy, University of Barcelona, Catalonia, Spain.\\
\email{jjoosten@ub.edu} \and
Department of Mathematics WE16, Ghent University, Ghent, Belgium\\
\email{fedor.pakhomov@ugent.be} \and 
Department of Mathematics WE16, Ghent University, Ghent, Belgium\\
\email{Konstantinos.Papafilippou@UGent.be} \and
Department of Mathematics WE16, Ghent University, Ghent, Belgium\\
\email{Andreas.Weiermann@UGent.be}}
\maketitle              
\begin{abstract}

Japaridze's provability logic $\glp$ has one modality $[n]$ for each natural number and has been used by Beklemishev for a proof theoretic analysis of Peano aritmetic ($\pa$) and related theories.
Among other benefits, this analysis yields the so-called {\em Every Worm Dies} ($\ewd$) principle, a natural combinatorial statement independent of $\pa$.
Recently, Beklemishev and Pakhomov have studied notions of provability corresponding to transfinite modalities in $\glp$.
We show that indeed the natural transfinite extension of $\glp$ is sound for this interpretation, and yields independent combinatorial principles for the second order theory $\aca$ of arithmetical comprehension with full induction.
We also provide restricted versions of $\ewd$ related to the fragments $\is_n$ of Peano arithmetic.
In order to prove the latter, we show that standard Hardy functions majorize their variants based on tree ordinals.

\keywords{Provability logics  \and Independence results \and Ordinal analysis.}
\end{abstract}
\section{Introduction}

It is an empirically observed phenomenon that `natural' theories are linearly ordered by strength, suggesting that this strength could be quantified in some fashion.
Much as real numbers are used to measure e.g.~the distance between two points on the plane, proof theorists use ordinal numbers to measure the power of formal theories~\cite{RathjenArt}.
The precise relationship between these theories and their respective ordinals may be defined in various ways, each with advantages and disadvantages.
One relatively recent and particularly compelling way to assign ordinals to a theory $T$ lies in studying hierarchies of iterated consistency or reflection principles for a weaker base theory $B$ that are provable in $T$.
The work of Beklemishev~\cite{Beklemishev:2004:ProvabilityAlgebrasAndOrdinals} has shown how provability logic, particularly Japaridze's polymodal variant $\glp$ \cite{Japaridze:1988}, provides an elegant framework for analyzing theories in this fashion.
$\glp$ is a propositional logic which has one modality $[n]$ for each natural number.
The expression $[n]\varphi$ is read {\em $\varphi$ is $n$-provable,} where $n$-provability is defined by allowing any true $\Pi_n$ sentence as an axiom.
Dually, $\langle n\rangle\varphi$ denotes the {\em $n$-consistency} of $\varphi$, which is equivalent to the schema stating that all $\Sigma_n$ consequences of $\varphi$ are true, also known as {\em $\Sigma_n$-reflection.}

This approach to ordinal analysis is based on special elements of the logic --the so-called \emph{worms}.
Formally, worms are expressions of the form $\langle n_1\rangle\ldots \langle n_m\rangle \top$, representing iterated reflection principles.
However, worms can be interpreted in many ways: formulas of a logic, words over an infinite alphabet, special fragments of arithmetic \cite{Beklemishev:1997:InductionRules,Leivant:1983:OptimalityOfInduction}, Turing progressions \cite{Joosten:2016:TuringTaylorExpansion,Turing:1939:TuringProgressions}, worlds in a special model for the closed fragment of $ \glp $, ordinals \cite{FernandezJoosten:2014:WellOrders}, and also iterations of special functions on ordinals \cite{FernandezJoosten:2012:Hyperations}.
These interpretations of worms allowed Beklemishev~\cite{Beklemishev:2004:ProvabilityAlgebrasAndOrdinals} to give an ordinal analysis of Peano arithmetc ($\pa$) and related systems, yielding as side-products a  classification of provably total recursive functions, consistency proofs, and a combinatorial principle independent of $\pa$, colloquially called {\em Every Worm Dies.}

Recently, Beklemishev and Pakhomov~ \cite{BeklemishevPakhomov:2019:GLPforTheoriesOfTruth} extended the method of ordinal analysis via provability logics to predicative systems of second order arithmetic.
It is important to investigate if said analysis also comes with the expected regular side-products for theories beyond the strength of $\pa$.
This paper is a first exploration in this direction, expanding on their analysis to provide combinatorial principles independent of standard extensions of $\pa$.

Beklemishev and Pakhomov's analysis involves notions of provability naturally corresponding to modalities $[\lambda]$ for $\lambda\geq \omega$ in the natural transfinite extension of $\glp$.
This extension is denoted $\glp_\Lambda$ \cite{BeklemishevFernandezJoosten:2014:LinearlyOrderedGLP}, where $\Lambda$ is the supremum of all modalities allowed.
The model theory of the logics $\glp_\Lambda$ has been extensively studied \cite{Fernandez:2012:TopologicalCompleteness,FernandezJoosten:2013:ModelsOfGLP}, as have various proof-theoretic interpretations and applications \cite{Joosten:2013:AnalysisBeyondFO,CordonFernandezJoostenLara:2017:PredicativityThroughTransfiniteReflection,FernandezJoosten:2018:OmegaRuleInterpretationGLP,Joosten:2020:MunchhausenProvability}.
Our first result is that $\glp_\Lambda$ is also sound for the notions of provability employed in \cite{BeklemishevPakhomov:2019:GLPforTheoriesOfTruth}.

The soundness of $\glp_\Lambda$ allows us to develop combinatorial principles in the style of Beklemishev \cite{Beklemishev:2004:ProvabilityAlgebrasAndOrdinals}, an effort that was initiated in Papafilippou's master thesis \cite{Papafillipou:2020:MastersThesis}.
We consider variants of the {\em Every Worm Dies} principle, denoted $\ewd^\Lambda$ for suitable $\Lambda$.
Our further main results are that over elementary arithmetic ($\ea$),  $\ewd^{\omega^2}$ is equivalent to the $1$-consistency of $\aca$, and that the principles $\ewd^{n+1}$ are equivalent to the $1$-consistency of $\is_n$.

\section{Preliminaries}

For first order arithmetic, we shall work with theories with identity in the language $\mathcal{L}_{\pa} :=\{ 0, S, +, \cdot, {\rm exp}\}$, with ${\rm exp}$ being the unary function for $x \mapsto 2^x$. We define $\Delta_0 = \Sigma_0 = \Pi_0$-formulas as those whose quantifiers occur in the form $\forall x{<}t \ \varphi$ or $\exists x{<}t \ \varphi$. Then, $\Sigma_{n+1}/\Pi_{n+1}$-formulas are inductively defined to be those of the form $\exists  x  \varphi/\forall  x   \varphi$, where $\varphi$ is a $\Pi_n/\Sigma_n$-formula, respectively. 
We may extend the above classes with a new predicate $P$ by treating it as an atomic formula: the resulting classes are denoted $\Pi_n (P) $, $\Sigma_n (P) $, etc.
More generally, for an extension $\mathcal{L} \supsetneq \mathcal{L}_{\pa}$ of the language of $\pa$ with new predicate symbols, we write $\Pi_n ^\mathcal{L}$, $\Sigma_n ^\mathcal{L}$, etc.~to denote the corresponding classes of formulas with any new predicate symbols of $\mathcal{L}$ treated as atoms.

\emph{Elementary Arithmetic} ($\ea$) or \emph{Kalmar Arithmetic} contains the basic axioms describing the non-logical symbols together with the induction axiom $ \text{I}_\varphi $ for every $\Delta_0$-formula $ \varphi $, which as usual denotes $  \text{I}_\varphi {:=} \varphi(0) \, \wedge \, \forall \, x \ \big( \varphi(x) \rightarrow \varphi(S(x)) \big) \rightarrow \forall \, x \ \varphi(x)$. For a given class of formulas $\Gamma$, we denote by $I\Gamma$ the theory extended $\ea$ with induction for all $\Gamma$-formulas. By $\ea^+$ we denote the extension of $\ea$ by the axiom expressing the totality of the  \emph{super-exponentiation} function $2^x _y$, which is defined inductively as: $2^x _0{ :=} x; \ \ 2^x _{n+1}{:=} 2^{2^x _n}$. Finally $\pa$ can be seen as the union of all $\is_n$ for every $n$. 
A theory $S$ of a language $\mathcal{L} \supseteq \mathcal{L}_{\pa}$ is elementary axiomatizable if there is a $ \Delta_0 $-formula $ Ax_S(x) $ that is true iff $ x $ is the code of an axiom of $ S $. By Craig's trick, all c.e. theories have an equivalent that is elementary axiomatizable.

The language of second order arithmetic is the extension of the language of first order arithmetic $ \mathcal{L}_{\pa} $ by the addition of second order variables and parameters and the predicate symbol $ \in $. The expression $ t {\in} X $ is an atomic formula where $ t $ is a term and $ X $ a second order variable. We add no symbol for the second order identity, but it can be defined via extensionality.
\begin{definition}[$ \aca $]
	The theory $ \aca $ is a theory in the language of second order arithmetic that extends $ \pa $ by the induction schema for all second order formulas and the comprehension schema:
	$ \exists \, Y \, \forall \, x \ (x {\in} Y \leftrightarrow \varphi(x) ),
	$
	for every arithmetical formula with possibly both first and second order parameters (excluding $ Y $).
\end{definition}

We may represent ordinals within arithmetic as pairs $\langle \Lambda,{<_\Lambda}\rangle$, where $\Lambda\subseteq \mathbb N$ and $<_\Lambda\subseteq \Lambda\times\Lambda$ are defined by $\Delta_0$ formulas (or, to be precise, $\Delta_0({\rm exp})$ formulas, as we allow exponentiation in our language).
Ordinals represented in this way are {\em elementarily presented;} any computable ordinal may be elementarily presented by Craig's trick (see~\cite{Beklemishev:2004:ProvabilityAlgebrasAndOrdinals}).
We notationally identify $\Lambda$ with $\langle \Lambda,{<_\Lambda}\rangle$ and may write $\lambda<\Lambda$ instead of $\lambda\in \Lambda$.
It is convenient to assume that limit ordinals below $\Lambda$ are equipped with {\em fundamental sequences,} i.e.~increasing sequences $\langle \lambda[n]\rangle_{n\in\mathbb N}$ which converge to $\lambda$; we also assume that fundamental sequences are elementary (i.e., have a $\Delta_0$ graph).
We also set $0[n] = 0$ and $(\alpha+1)[n] = \alpha$.
In second order logic we may express the property ``$\Lambda$ is well ordered,'' and it is well known that for large $\Lambda$, principles of this form are not provable over weak theories (see e.g.~\cite{Pohlers:2009:PTBook}).

\begin{definition}
	For $\Lambda$ an ordinal, the logic $\glp_\Lambda$ is the propositional modal logic with a modality $[\alpha]$ for each $\alpha<\Lambda$. Each $[\alpha]$ modality satisfies the $\bgl$ identities given by all tautologies, distribution axioms $[\alpha] (\varphi \to \psi) \to ([\alpha]\varphi \to [\alpha]\psi)$, L\"ob's axiom scheme $[\alpha] ([\alpha] \varphi \to \varphi ) \to [\alpha]\varphi$ and the rules modus ponens and necesitation $\varphi/[\alpha]\varphi$. The interaction between modalities is governed by two schemes, monotonicity $[\beta]\varphi \to [\alpha] \varphi$ and, negative introspection $\la \beta \ra \varphi \to [\alpha]\la \beta\ra \varphi$ where in both schemes it is required that $\beta < \alpha < \Lambda$. As usual $\la \alpha \ra \varphi$ is a shorthand for $\neg [ \alpha ] \neg \varphi$.
\end{definition}

Note that $\glp_\Lambda$ is well defined for any linear order $\Lambda$~\cite{BeklemishevFernandezJoosten:2014:LinearlyOrderedGLP}, but in this paper we do not consider ill-founded $\Lambda$.
The closed fragment of $\glp_\Lambda$ suffices for ordinal analyses and worms are its backbone.

\begin{definition}\label{definition:WormStuff}
	The class of \emph{worms} of $\glp_\Lambda$ is denoted $\W^\Lambda$ and defined by $\top{\in} \W^\Lambda$, and $A{\in} \W^\Lambda \wedge \alpha{<}\Lambda \Rightarrow \la \alpha\ra A {\in} \W^\Lambda$. By $\W^\Lambda_\alpha$ we denote the set of worms where all occurring modalities are at least $\alpha$. We define an order $<_\alpha$ for each $\alpha{<}\Lambda$ by setting $A{<_\alpha}B $ if $\glp_\Lambda {\vdash} B {\to} \la \alpha \ra A$. 
\end{definition}

It will be convenient to introduce notation to compose and decompose worms.
Let us write $\alpha$ instead of $\langle\alpha\rangle$ when this does not lead to confusion.
For worms $A$ and $B$ we define the concatenation $A{\conca} B$ via $\top {\conca} B{:=}B$ and $( \alpha   A){\conca} B  {:=}  \alpha   (A{\conca} B)$.
We define the $\alpha$-head $h_\alpha$ of $A$ inductively: $h_\alpha(\top){:=}\top$; $h_\alpha (  \beta A){:=}\top$ if $\beta{<}\alpha$, and $h_\alpha ( \beta A){:=} \beta h_\alpha(A)$ otherwise. Likewise, we define the $\alpha$-remainder $r_\alpha$ of $A$ as $r_\alpha(\top){:=}\top$ and, $r_\alpha ( \beta A){:=} \beta A$ if $\beta{<}\alpha$ and $r_\alpha ( \beta A){:=}r_\alpha (A)$ otherwise. We define the head $h$ and remainder $r$ of $ \alpha A$ as $h( \alpha A){:=}h_\alpha ( \alpha A)$ and $r( \alpha A){:=}r_\alpha ( \alpha A)$. Further, $h(\top){:=}r(\top){:=}\top$.

\begin{lemma} \label{lem: basic properties of glp concerned with building up}
	The following formulas are derivable in $ \glp_\Lambda $:
	\begin{itemize}
		\item[(i)] If $ \alpha \leq \beta $ and $ A \in \w^\Lambda $, then $ \glp_\Lambda\vdash \beta \alpha A \to \alpha A $;
		\item[(ii)] If $ \alpha < \beta $, then $ \glp_\Lambda\vdash \beta \varphi \wedge \alpha \psi \leftrightarrow \beta (\varphi \wedge \alpha \psi) $;
		\item[(iii)] If $ A \in \w_{\alpha+1} ^\Lambda $, then $ \glp_\Lambda\vdash AC \wedge \alpha B \leftrightarrow A(C \wedge \alpha B) $;
		\item[(iv)] If $ A \in \w_{\alpha+1} ^\Lambda $, then $ \glp_\Lambda\vdash A \wedge \alpha B \leftrightarrow A \alpha B $.
	\end{itemize}
\end{lemma}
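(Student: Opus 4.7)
My plan is to prove the four items in order, treating (i) and (ii) as direct applications of the $\glp_\Lambda$ axioms and then leveraging them in structural inductions for (iii) and (iv).

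For (i), I split on whether $\alpha=\beta$ or $\alpha<\beta$. In the equality case, the statement $\langle\alpha\rangle\langle\alpha\rangle A \to \langle\alpha\rangle A$ is just the dual of the $\bgl$-derivable transitivity axiom $[\alpha]\varphi\to[\alpha][\alpha]\varphi$. In the strict case, I first apply monotonicity (in its dual form) $\langle\beta\rangle\psi\to\langle\alpha\rangle\psi$ with $\psi:=\langle\alpha\rangle A$ to get $\langle\beta\rangle\langle\alpha\rangle A \to \langle\alpha\rangle\langle\alpha\rangle A$, and then fall back on the equality case. The direction of monotonicity has to be read carefully from the paper's convention $[\beta]\varphi\to[\alpha]\varphi$ for $\beta<\alpha$; this is the chief source of possible confusion, but once it is fixed, both parts are immediate.

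For (ii), the direction $\beta(\varphi\wedge\alpha\psi)\to\beta\varphi\wedge\alpha\psi$ follows from normality of $[\beta]$ (giving $\beta(\varphi\wedge\alpha\psi)\to\beta\varphi$) together with $\beta\alpha\psi\to\alpha\psi$, an instance of (i) using $\alpha<\beta$. The converse uses the negative introspection axiom: since $\alpha<\beta$, from $\alpha\psi$ we derive $[\beta]\alpha\psi$, and then the standard modal fact $\langle\beta\rangle\varphi\wedge[\beta]\chi\to\langle\beta\rangle(\varphi\wedge\chi)$ applied with $\chi:=\alpha\psi$ delivers $\beta(\varphi\wedge\alpha\psi)$.

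For (iii), I proceed by induction on the length of $A\in\W^\Lambda_{\alpha+1}$. The base case $A=\top$ is trivial. For the step, write $A=\gamma A'$ with $\gamma\geq\alpha+1$ and $A'\in\W^\Lambda_{\alpha+1}$; then by (ii) (applicable since $\alpha<\gamma$) one has $\gamma A'C\wedge\alpha B\leftrightarrow\gamma(A'C\wedge\alpha B)$, and the induction hypothesis applied inside gives $\gamma(A'C\wedge\alpha B)\leftrightarrow\gamma A'(C\wedge\alpha B)=A(C\wedge\alpha B)$. Finally (iv) is the instance of (iii) with $C:=\top$, since $A\top=A$ and $\top\wedge\alpha B\leftrightarrow\alpha B$.

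I do not expect any serious obstacle: the only delicate point is keeping the monotonicity direction straight (paper's convention has larger ordinal $=$ stronger provability), and being explicit that (ii) is applied with $\varphi:=A'$ and $\psi:=B$, which is fine because $\varphi,\psi$ in (ii) are arbitrary formulas.
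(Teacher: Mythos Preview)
Your proof is correct and follows the standard route. The paper itself does not give a proof of this lemma; it only remarks that the items follow successively from the $\glp_\Lambda$ axioms and refers to Beklemishev's survey and the Beklemishev--Fern\'andez--Joosten paper for details. Your argument is precisely the kind of derivation those references contain: (i) from transitivity (the $\mathbf 4$-axiom, derivable in $\bgl$) plus dual monotonicity, (ii) from negative introspection and the $K$-valid $\langle\beta\rangle\varphi\wedge[\beta]\chi\to\langle\beta\rangle(\varphi\wedge\chi)$, and (iii)--(iv) by induction on the length of $A$ using (ii). One tiny remark: when you invoke (i) inside the $(\leftarrow)$-direction of (ii) you are applying it with an arbitrary formula $\psi$ rather than a worm $A$; this is harmless since your proof of (i) nowhere uses that $A$ is a worm, but it is worth saying explicitly.
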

The proof of which follows successively from the axioms of $ \glp_\Lambda $, details for which can be found in \cite{Beklemishev:2005:Survey} and \cite{BeklemishevFernandezJoosten:2014:LinearlyOrderedGLP}. With this lemma in our toolbelt, we can prove the following proposition which will be of use to us later as we present worm battles.
Below, the length of a worm $\left| B \right|$ is defined inductively as: $\left| \monid \right| = 0$ and $\left| \alpha B \right|= 1 + \left| B \right|$.
\begin{proposition} \label{prop: <a>^n+1 implies smaller A with <a>^n }
	The following is provable over $\ea$. Let $ n\in\mathbb N $, $ \alpha <\Lambda $ be ordinals, $ A \in \w^{\alpha} $, and $ B \in \w ^{\alpha + 1} $ be such that $ \left| B \right| \leq n $.
	Then,  
	$ \glp_\Lambda \vdash \la \alpha \ra^{n+1}\top \rightarrow A B. 
	$
\end{proposition}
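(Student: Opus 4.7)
The plan is to prove the proposition by induction on $|B|$, with support from a preliminary ``prepend'' lemma. First I would establish the sub-lemma: for every $C\in\w^\alpha$ and every formula $\varphi$, $\glp_\Lambda\vdash\la\alpha\ra\varphi\to C\la\alpha\ra\varphi$, where $C\la\alpha\ra\varphi$ denotes the result of prepending $C$ in front of $\la\alpha\ra\varphi$. I would prove this by induction on $|C|$; the nontrivial step writes $C=\la\beta\ra C'$ with $\beta<\alpha$, combines negative introspection $\la\alpha\ra\varphi\to[\beta]\la\alpha\ra\varphi$ with the monotonicity consequence $\la\alpha\ra\varphi\to\la\beta\ra\top$ using the $K$-tautology $[\beta]\psi\wedge\la\beta\ra\top\to\la\beta\ra\psi$ to get $\la\alpha\ra\varphi\to\la\beta\ra\la\alpha\ra\varphi$, and then pushes the inductive hypothesis inside $\la\beta\ra$ by necessitation and distribution. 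I will also use the standard consequence of L\"ob's axiom $[\alpha]\varphi\to[\alpha][\alpha]\varphi$ to obtain $\glp_\Lambda\vdash\la\alpha\ra^{n+1}\top\to\la\alpha\ra^k\top$ for every $k\leq n+1$.

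For the main induction on $|B|$, the base case $|B|=0$ gives $B=\monid$: the sub-lemma with $\varphi=\monid$ yields $\la\alpha\ra\top\to A\la\alpha\ra\top$, a simple application of necessitation and distribution pushes the trivial $\la\alpha\ra\top\to\top$ through the modalities of $A$ to give $A\la\alpha\ra\top\to A$, and the auxiliary $\la\alpha\ra^{n+1}\top\to\la\alpha\ra\top$ then closes the argument. For $|B|\geq 1$, decompose $B=\la\beta\ra B'$ with $\beta\leq\alpha$. If $\beta<\alpha$ then $A\la\beta\ra\in\w^\alpha$ and $|B'|\leq n-1$, so the inductive hypothesis applied to $(A\la\beta\ra,B')$ gives $\la\alpha\ra^n\top\to AB$, which combines with $\la\alpha\ra^{n+1}\top\to\la\alpha\ra^n\top$. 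If $\beta=\alpha$, the inductive hypothesis applied to $(\monid,B')$ gives $\la\alpha\ra^n\top\to B'$; one application of necessitation and distribution at $\la\alpha\ra$ yields $\la\alpha\ra^{n+1}\top\to\la\alpha\ra B'$, and the sub-lemma with $\varphi=B'$ finally lifts this to $\la\alpha\ra^{n+1}\top\to A\la\alpha\ra B'=AB$.

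The main difficulty I anticipate is the bookkeeping in the inductive step: keeping track of which head of $B$ is to be absorbed into $A$, which is to trigger the $\la\alpha\ra$-step, and how the parameter $n$ descends in each subcase; no single step is hard on its own. Formalization over $\ea$ is then routine, since each derivation produced above has length elementary in $|A|$, $|B|$, and $n$, and the entire construction can be carried out by elementary recursion.
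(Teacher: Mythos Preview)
Your proposal is correct and relies on the same $\glp_\Lambda$ principles as the paper (negative introspection, monotonicity, the $\mathbf{4}$-consequence $\la\alpha\ra^{n+1}\top\to\la\alpha\ra^k\top$, and necessitation/distribution), but the two inductions are organized differently. The paper first proves by induction on $n$ that $\glp_\Lambda\vdash\la\alpha\ra^{n}\top\to B$ for every $B\in\w^{\alpha+1}$ with $|B|\leq n$, and then runs a second external induction on $|A|$, using Lemma~\ref{lem: basic properties of glp concerned with building up} at the inductive step to prepend each modality of $A$. Your ``prepend'' sub-lemma $\la\alpha\ra\varphi\to C\la\alpha\ra\varphi$ packages that second induction once and for all (and in a slightly more general form, for arbitrary $\varphi$), after which a single induction on $|B|$ with the $\beta<\alpha$ / $\beta=\alpha$ case split does the rest. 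The net effect is the same; your version makes the role of negative introspection more explicit, while the paper hides that work inside the cited lemma.
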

\begin{toappendix}

\begin{proof}
	We will prove this fact through two external inductions, first we will show that for every $ n $ and $ B $ satisfying the above conditions, $ \glp_\Lambda \vdash \la \alpha \ra^{n}\top \rightarrow B $. If $ n = 0 $, then it is clear. Assume now that it holds for $ n=k $. Let $ B \in \w ^{\alpha + 1} $ with $ \left| B\right| \leq k $ and let $ \beta \leq \alpha $, then in $\glp_\Lambda$, $ \la \alpha \ra^{k+1}\top \vdash \la \alpha \ra^{\left|B\right|+1}\top \vdash \la \alpha \ra\la \alpha \ra^{\left|B\right|}\top \vdash \la \alpha \ra B\vdash \la \beta \ra B$, where the first step uses at most $k $ applications of the \textbf{4} axiom.
	
	Now we will perform an external induction on $ \left|A\right| $. If $ A{=} \beta $ for some $ \beta < \alpha $, then we fall in the case of the previous induction.	If $ A {=} \la\beta\ra C  $, where $ \beta < \alpha $ and $ \glp_\Lambda \vdash \la \alpha \ra^{n+1}\top \rightarrow C B $, then in $\glp_\Lambda$, $ \la \alpha \ra^{n+1}\top \vdash (C B \wedge \la \alpha \ra \top)\vdash \la \alpha \ra CB$, using Lemma \ref{lem: basic properties of glp concerned with building up}.
\end{proof}

\end{toappendix}

From \cite{Beklemishev:2004:ProvabilityAlgebrasAndOrdinals} we know that $\la \W^\omega_n{/}{\equiv}, <_n\ra\, {\cong}\, \la \varepsilon_0, < \ra$, so that worms (modulo provable equivalence) can be used to denote ordinals. One can find analogs of fundamental sequences for ordinals by defining $Q^\alpha _0(\varphi){:=}\la\alpha\ra \varphi$; $Q^\alpha_{k+1}(\varphi){:=}\la \alpha\ra \big( \varphi \wedge Q^\alpha _k(\varphi)\big)$. By an easy induction on $k$ one sees that $Q^\alpha_{k+1}(A) <_\beta  \la \alpha{+}1\ra A$ for any $\beta\leq\alpha$ yielding a so-called step-down function.

This step-down function can be rewritten to get a more combinatorial flavour reminiscent of the Hydra battle. To this end, we define the \emph{chop-operator} $c$ on worms that do not start with a limit ordinal by $c(\top){:=}\top; c(\la 0\ra A){:=}A$ and, $c(\la \alpha+1\ra A){:=}\la \alpha\ra A$. Now we define a stepping down function based on a combination of chopping a worm, the worm growing back and using a given and fixed fundamental sequence of the limit ordinals occurring in $\glp_\Lambda$ for countable $\Lambda$. 

\begin{definition}[Step-down function]
For any number $k$ let $A\llbracket k\rrbracket{:=}c(A)$ for $A{=}\top$ or $A{=}0B$, $A\llbracket k\rrbracket{:=}\big( c(h(A))\big)^{k+1}\conca r(A)$ for $A = \la \alpha + 1 \ra B$ and $A\llbracket k\rrbracket{:=} \la \lambda [k] \ra B$ for $A = \la \lambda \ra B$ where $\lambda$ is a limit ordinal and $\lambda [k]$ is the $k$-th element of the fundamental sequence of $\lambda$.
\end{definition}
The definition above relates to the functions $Q^\alpha _k $ and serves as a way to produce fundamental sequences of worms inside $\la \W^\Lambda /\equiv, <_0 \ra$. With an easy induction on $k$, one can prove the following:
\begin{lemma}
	If $ A,B \in \w^\Lambda $ and $ A = \la \alpha {+} 1 \ra B $ then for every $ k \in \mathbb{N} $ we have that, 
	$$ \glp_\Lambda \vdash Q^\alpha _k (B) \leftrightarrow (\alpha h_{\alpha+1}(B))^{k+1} r_{\alpha+1}(B) .
	$$
\end{lemma}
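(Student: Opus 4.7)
The plan is to induct externally on $k$. Write $H := h_{\alpha+1}(B)$ and $R := r_{\alpha+1}(B)$, so that $B = H \conca R$ with $H \in \w_{\alpha+1}^\Lambda$ and with $R$ either $\top$ or beginning with a modality $\leq \alpha$. The base case $k = 0$ is immediate from the definitions: $Q^\alpha_0(B) = \la\alpha\ra B = \la\alpha\ra HR = (\alpha H)^{1} R$.

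For the inductive step, assume $Q^\alpha_k(B) \leftrightarrow (\alpha H)^{k+1} R$ in $\glp_\Lambda$. Unfolding the definition of $Q^\alpha_{k+1}$ and applying the IH gives $Q^\alpha_{k+1}(B) \leftrightarrow \la\alpha\ra(HR \wedge (\alpha H)^{k+1} R)$. Since $H \in \w_{\alpha+1}^\Lambda$ and $(\alpha H)^{k+1} R$ begins with $\la\alpha\ra$, part (iii) of the preceding lemma lets us factor $H$ out of the conjunction, yielding $Q^\alpha_{k+1}(B) \leftrightarrow \la\alpha\ra H(R \wedge (\alpha H)^{k+1} R)$. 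To conclude $Q^\alpha_{k+1}(B) \leftrightarrow (\alpha H)^{k+2} R = \la\alpha\ra H(\alpha H)^{k+1} R$, it then suffices, by monotonicity of the $\la\alpha\ra H$ prefix, to establish the auxiliary implication $(\alpha H)^{k+1} R \to R$.

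I expect this auxiliary implication to be the main, though routine, obstacle. If $R = \top$ it is trivial; otherwise write $R = \la\beta\ra R'$ with $\beta \leq \alpha$. The key $\glp_\Lambda$-provable fact we use is that $\la\gamma\ra\la\beta\ra\psi \to \la\beta\ra\psi$ whenever $\gamma \geq \beta$: this follows from monotonicity (giving $\la\gamma\ra\la\beta\ra\psi \to \la\beta\ra\la\beta\ra\psi$) combined with the transitivity of $\la\beta\ra$, which is a standard consequence of L\"ob's axiom. Every modality appearing in the prefix of $(\alpha H)^{k+1} R$ is $\geq \alpha \geq \beta$, so we can strip these modalities off one at a time, starting from the one immediately preceding $R$ and working outward; each step uses necessitation to carry the reduction under the surrounding modalities. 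After finitely many such steps the whole prefix is consumed and only $R$ remains, completing the induction.
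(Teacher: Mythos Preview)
Your proof is correct and follows exactly the approach the paper indicates (``an easy induction on $k$''); the paper gives no further details. One minor simplification: your auxiliary implication $(\alpha H)^{k+1}R \to R$ is precisely a repeated application of Lemma~\ref{lem: basic properties of glp concerned with building up}(i), since every modality in the prefix is $\geq \beta$ and each tail is a worm, so you need not rederive $\langle\gamma\rangle\langle\beta\rangle\psi \to \langle\beta\rangle\psi$ from monotonicity and transitivity.
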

Then, assuming that there is an elementary coding of the ordinals present according to $\ea$, the following is provable over $\ea$:
\begin{corollary}\label{cor: relationship between double bracket and double angle in GLP}
    For any $ k \in \mathbb{N}  $ and $ A \in \w^\Lambda $ with $ A \neq \top $, we have $ A\llbracket k \rrbracket <_0 A $.
\end{corollary}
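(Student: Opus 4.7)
The plan is to verify the claim by case analysis on the form of $A$ according to the three clauses of the definition of $\llbracket\cdot\rrbracket$, establishing in each case the required inequality $\glp_\Lambda \vdash A \to \la 0\ra A\llbracket k\rrbracket$. The whole argument can be carried out inside $\ea$ because the ordinals, their fundamental sequences and the syntactic operations $h$, $r$, $c$ on worms are all elementary in the code of $A$, and $\glp_\Lambda$-provability is elementarily checkable.

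If $A = \la 0\ra B$, then $A\llbracket k\rrbracket = c(A) = B$ and the required $\la 0\ra B \to \la 0\ra B$ is immediate. If $A = \la\alpha+1\ra B$, unfolding $h$, $r$ and $c$ yields
\[A\llbracket k\rrbracket = (\la\alpha\ra h_{\alpha+1}(B))^{k+1} \conca r_{\alpha+1}(B),\]
which by the lemma immediately preceding the corollary is $\glp_\Lambda$-equivalent to $Q^\alpha_k(B)$. Hence it suffices to derive $\la\alpha+1\ra B \to \la 0\ra Q^\alpha_k(B)$, and this is precisely the step-down property recorded in the paragraph above the definition of $\llbracket\cdot\rrbracket$, specialised to $\beta = 0$; the case $k=0$ is also covered because $Q^\alpha_{k+1} \to Q^\alpha_k$ (by normality together with $\varphi \wedge Q^\alpha_k(\varphi) \to \varphi$) propagates the inequality downwards.

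In the limit case $A = \la\lambda\ra B$ we have $A\llbracket k\rrbracket = \la\lambda[k]\ra B$, and since $\lambda[k]<\lambda$, monotonicity gives $\la\lambda\ra B \to \la\lambda[k]\ra B$, while negative introspection gives $\la\lambda[k]\ra B \to [\lambda]\la\lambda[k]\ra B$. Chaining these with normality of $\la\lambda\ra$ yields $\la\lambda\ra B \to \la\lambda\ra(B \wedge \la\lambda[k]\ra B) \to \la\lambda\ra\la\lambda[k]\ra B$, and a final application of monotonicity delivers $\la 0\ra\la\lambda[k]\ra B$, as required. The only point that demands any care is the successor case, where the explicit form of $A\llbracket k\rrbracket$ must be matched against the recursive definition of $Q^\alpha_k$; the preceding lemma does exactly this bookkeeping, so no serious obstacle remains.
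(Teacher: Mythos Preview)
Your proof is correct and follows essentially the same approach as the paper: both handle the successor case via the equivalence of $A\llbracket k\rrbracket$ with $Q^\alpha_k(B)$ and the step-down inequality, and the limit case via monotonicity. The paper's proof is slightly terser (it obtains $\la\alpha\ra A\llbracket k\rrbracket$ from $Q^\alpha_{k+1}(B)$ and then applies monotonicity, and for the limit case simply cites the monotonicity axiom), but the underlying argument is the same.
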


\begin{toappendix}

\begin{proof}
    Since for every natural number $k$:
    $ \glp_\Lambda \vdash \la \alpha {+}1 \ra B \rightarrow Q^\alpha _k (B),
	$
	for $A = \la \alpha {+} 1 \ra B$, in $\glp_\Lambda $, $ A \vdash (\alpha h_{\alpha+1}(B))^{k+2} r_{\alpha+1}(B)
	\vdash\alpha A\llbracket k \rrbracket
	\vdash  0 A\llbracket k \rrbracket$.
	The limit stage follows by the monotonicity axiom of $\glp_\Lambda$.
\end{proof}

\end{toappendix}

Given a worm $A\in W^\Lambda$, we now define a decreasing sequence (strictly as long as we have not reached $\top$) by $A_0:=A$ and $A_{k+1}{:=}A_k\llbracket k+1\rrbracket$. We now define the principle $\ewd^\Lambda$ standing for \emph{Every Worm Dies} as an arithmetisation of $\forall A \in W^\Lambda \exists k A_k {=}\top$. Note that here the worms are coded as sequences of ordinals which we achieve by assuming that $\Lambda$ is elementarily presented.\\

The modalities of $\glp_\omega$ can be linked to arithmetic by interpreting $\la n\ra \varphi$ for a given c.e. theory $S\supseteq \ea $ as the finitely axiomatisable scheme $\Sigma_n\text{-}\rfn(S{+} \varphi^*): = \{ \Box_{S + \varphi^*} \sigma \to \sigma \mid \sigma \in \Sigma_n\} \ \equiv \Pi_{n+1}\text{-}\rfn(S{+} \varphi^*)$. The $\Box_{S}$ denotes the standard arithmetisation of formalised provability for the theory $S$ and $\varphi^*$ denotes an interpretation of $\varphi$ in arithmetic, mapping propositional variables to sentences, commuting with the connectives and, translating the $\la n \ra$ as above. This interpretation is used to classify the aforementioned first order theories of arithmetic.

\begin{theorem}[Leivant, Beklemishev \cite{Leivant:1983:OptimalityOfInduction,Beklemishev:1997:InductionRules}]
	Provably in $\ea^+$, for $n{\geq}1$: 
	\[
	\is_{n} \equiv \Sigma_{n+1}\text{-}\rfn(\ea) .\]
\end{theorem}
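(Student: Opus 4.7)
The plan is to establish the two directions separately, carrying out all arguments in $\ea^+$; the availability of super-exponentiation is what enables both the construction of partial truth predicates for $\Sigma_{n+1}$-formulas and the coding of syntactic transformations at that complexity level.

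For the direction $\is_n\vdash \Sigma_{n+1}\text{-}\rfn(\ea)$, I would first construct in $\ea^+$ a partial $\Sigma_{n+1}$-truth predicate $\true_{\Sigma_{n+1}}(\cdot)$ satisfying Tarski's biconditionals for all standard $\Sigma_{n+1}$-sentences. Working inside $\is_n$, the aim is to prove by induction on the height of an $\ea$-derivation that every $\ea$-provable $\Sigma_{n+1}$-sentence is $\true_{\Sigma_{n+1}}$-true. The subtle point is that the naive statement is $\Pi_{n+2}$ in shape; to stay within $\Sigma_n$-induction, I would first invoke a partial cut-elimination argument (available in $\ea^+$) restricting attention to derivations whose intermediate formulas do not exceed $\Sigma_{n+1}$ in complexity, and then phrase the induction invariant as a $\Sigma_n$-statement about bounded codes along the derivation. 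Each rule of the resulting cut-free calculus preserves $\true_{\Sigma_{n+1}}$-truth, yielding $\Box_{\ea}\sigma\to\sigma$ for each $\sigma\in\Sigma_{n+1}$.

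For the converse $\Sigma_{n+1}\text{-}\rfn(\ea)\vdash \is_n$, I would fix $\varphi\in\Sigma_n$ together with the induction premises $P:=\varphi(0)\wedge\forall y(\varphi(y)\to\varphi(Sy))$. Chaining $x$ applications of the step axiom, $\ea$ proves $P\to\varphi(\bar x)$ for each numeral, and the resulting proof is elementary in $x$, so $\ea\vdash\forall x\,\bprf_{\ea}(f(x),\ulcorner P\to\varphi(\dot x)\urcorner)$ for some elementary $f$. To then conclude $P\to\forall x\,\varphi(x)$ from this merely uniform provability, I would exploit the equivalence $\Sigma_{n+1}\text{-}\rfn(\ea)\equiv\Pi_{n+1}\text{-}\rfn(\ea)$ noted in the excerpt together with the classical Kreisel--L\'evy fact that local $\Pi_{n+1}$-reflection over $\ea$ entails uniform $\Sigma_n$-reflection; the latter applied to $P\to\varphi(x)$ delivers the desired induction instance.

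The main obstacle is the complexity bookkeeping in the forward direction: the soundness-of-$\ea$ statement must genuinely fit a $\Sigma_n$-induction template rather than requiring $\Sigma_{n+1}$-induction, and it is precisely the partial cut-elimination step in $\ea^+$ that makes this possible, replacing an a priori $\Pi_{n+2}$-claim about arbitrary proofs by a $\Sigma_n$-invariant along cut-free derivations. The backward direction is conceptually cleaner but still depends on the non-trivial passage from local $\Pi_{n+1}$- to uniform $\Sigma_n$-reflection; this is folklore, but needs to be arranged so as not to exceed the induction strength available in the ambient $\ea^+$.
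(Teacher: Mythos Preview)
The paper does not prove this theorem at all: it is stated with attribution to Leivant and Beklemishev and immediately followed by ``Further known results\ldots''; there is no proof in either the body or the appendix. So there is nothing in the paper to compare your proposal against.

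That said, your sketch is broadly aligned with the standard argument in the cited sources. A couple of points worth sharpening. In the forward direction, the partial cut-elimination you invoke is precisely why $\ea^+$ (rather than $\ea$) is the base theory: the transformation to a derivation with cuts of bounded complexity is superexponential, and only then can the soundness invariant be phrased at the right complexity for $\is_n$ to handle it. In the backward direction, your appeal to ``local $\Pi_{n+1}$-reflection implies uniform $\Sigma_n$-reflection'' is doing real work and is exactly the Kreisel--L\'evy ingredient that Beklemishev isolates; you are right that it must be arranged over $\ea^+$, and this is indeed how it is done in \cite{Beklemishev:1997:InductionRules}. The imprecision in your forward direction about whether the induction invariant is $\Sigma_n$ or $\Pi_n$ would need to be resolved in a full proof (the usual formulation uses $\Pi_n$-induction on the length of the cut-reduced derivation, which is available since $\is_n\equiv\ip_n$), but the overall architecture is the accepted one.
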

Further known results involving partial reflection are given by assessing the totality of certain functions.
For a $\Sigma_1$-definable function $f$, by $f{\downarrow}$ we denote the arithmetical sentence $\forall x \exists y f(x)=y$ stating that $f$ is defined everywhere and likewise, by $f(x){\downarrow}$ we denote $\exists y f(x)=y$.
\begin{lemma} (\cite{Beklemishev:2005:Survey})\label{lem: f^x (x) halts iff 1-con that f halts}
	Let $ f $ be a $\Sigma_1$-definable function that is non-decreasing and $ f(x)\geq 2^x $. Then,
	$$ \ea \vdash \lambda x. {f^{(x)}(x)} {\downarrow} \, \leftrightarrow \left\langle 1\right\rangle _{\ea} f{\downarrow} .$$
\end{lemma}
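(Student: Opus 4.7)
The plan is to prove the two implications of the biconditional inside $\ea$ separately: the forward direction via a standard application of $\Sigma_1$-reflection, and the reverse direction via a Herbrand-style witness extraction that leans crucially on the growth hypothesis $f(x) \geq 2^x$.

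For the forward direction $\langle 1 \rangle_{\ea} f{\downarrow} \to \lambda x.\, f^{(x)}(x){\downarrow}$, one works inside $\ea$ assuming $\Sigma_1$-reflection for $\ea + f{\downarrow}$. Since $f$ is $\Sigma_1$-definable, the graph of $f^{(x)}(x)$ (as a function of $x$, coded via computation sequences) is elementary in $x$, so $\psi(x) := \exists y\, f^{(x)}(x) = y$ is $\Sigma_1$. The central step is to verify, provably in $\ea$, that $\forall x\, \square_{\ea + f{\downarrow}}\, \psi(\dot x)$: the natural proof iterates the axiom $f{\downarrow}$ exactly $x$ times, and its G\"odel number is constructible elementarily in $x$. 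Applying $\Sigma_1$-reflection to $\psi$ then yields $\forall x\, \psi(x)$, which is $\lambda x.\, f^{(x)}(x){\downarrow}$.

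For the reverse direction $\lambda x.\, f^{(x)}(x){\downarrow} \to \langle 1 \rangle_{\ea} f{\downarrow}$, fix an arbitrary $\sigma = \exists y\, \varphi(y) \in \Sigma_1$ together with a proof $p$ witnessing $\square_{\ea + f{\downarrow}}\, \sigma$. The plan is a Herbrand-style witness extraction: because $f{\downarrow}$ is $\Pi_2$, only finitely many instances $f(t_1){\downarrow}, \ldots, f(t_k){\downarrow}$ can appear in $p$, with $k$ and $|t_i|$ bounded elementarily in $|p|$, so standard witnessing for $\ea$ produces a bound $N$ on $y$ that is elementary in the values $f(t_1), \ldots, f(t_k)$. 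Since $f$ is non-decreasing with $f(x) \geq 2^x$, the bound $N$ is majorized by $f^{(m)}(m)$ for some $m$ elementary in $|p|$. The assumption $\lambda x.\, f^{(x)}(x){\downarrow}$ makes this bound available, and a $\Delta_0$ bounded search then locates a witness $y < N$ verifying $\varphi(y)$, giving $\true_{\Sigma_1}(\sigma)$.

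The principal obstacle is the reverse direction: formalizing the Herbrand-style witnessing entirely inside $\ea$ and verifying that the extracted bound is really majorized by $f^{(m)}(m)$ for some $m$ elementary in $|p|$. The hypothesis $f(x) \geq 2^x$ is essential here, as it forces iterations of $f$ to grow at least as fast as superexponentiation, thereby absorbing the superexponential overhead inherent in proof-theoretic witnessing and ensuring that the diagonal $\lambda x.\, f^{(x)}(x)$ dominates everything required.
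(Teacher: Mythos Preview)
The paper does not prove this lemma; it is cited from Beklemishev's survey without proof. Your sketch follows the standard argument found there and is correct in outline: one direction by verifying inside $\ea$ that $\ea + f{\downarrow}$ proves each instance $f^{(x)}(x){\downarrow}$ via a proof whose code is elementary in $x$, then applying uniform $\Sigma_1$-reflection; the other direction by a formalized Herbrand/Parsons-style witness extraction, using $f(x)\geq 2^x$ to absorb the elementary overhead into extra iterations of $f$.

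Two small remarks. First, you have the labels ``forward'' and ``reverse'' swapped relative to the displayed biconditional, though the content under each heading is fine. Second, in the Herbrand direction your description of the extracted bound as ``elementary in the values $f(t_1),\ldots,f(t_k)$'' understates the actual shape: the terms $t_i$ are not independent closed terms but are \emph{nested}, each $t_{i+1}$ possibly depending on the value $f(t_i)$, composed with elementary functions in between. It is precisely this nesting that produces the iterate $f^{(m)}$, and the hypothesis $f(x)\geq 2^x$ is what lets each intervening elementary function be dominated by a bounded number of further applications of $f$. You do arrive at the right conclusion and correctly flag that formalizing this witnessing inside $\ea$ is the real work, so this is only a matter of sharpening the exposition rather than a gap.
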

	
If we substitute $ f $ with $ {\rm exp}$, we get:
\begin{corollary} \label{cor: ea^+ is equiv to 1-con(ea)}
	Provably in $\ea$, we have $ \ea^+ \equiv \ea + \, \Pi_2 \textnormal{-}\rfn(\ea) $.
\end{corollary}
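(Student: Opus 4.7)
The plan is to instantiate Lemma~\ref{lem: f^x (x) halts iff 1-con that f halts} with $f(x) = {\rm exp}(x) = 2^x$. This $f$ is $\Sigma_1$-definable (it is a function symbol of $\mathcal{L}_{\pa}$ with its defining axioms present in $\ea$), non-decreasing, and trivially satisfies $f(x) \geq 2^x$, so the hypotheses of the lemma are met. The lemma then yields, provably in $\ea$, that
\[ \lambda x.\, {\rm exp}^{(x)}(x){\downarrow} \ \leftrightarrow\ \la 1 \ra_{\ea} {\rm exp}{\downarrow}. \]

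Next I would unpack both sides of this equivalence. For the right-hand side, the arithmetical interpretation of $\la 1 \ra \varphi$ is by definition $\Sigma_1\text{-}\rfn(\ea + \varphi) \equiv \Pi_2\text{-}\rfn(\ea + \varphi)$. Since ${\rm exp}$ is a symbol of $\mathcal{L}_{\pa}$ whose defining axioms are in $\ea$, we have $\ea \vdash {\rm exp}{\downarrow}$, so $\ea + {\rm exp}{\downarrow} \equiv \ea$ and the right-hand side collapses to $\Pi_2\text{-}\rfn(\ea)$.

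For the left-hand side, unfolding the inductive definition of super-exponentiation gives ${\rm exp}^{(x)}(x) = 2^x_x$, so $\lambda x.\, {\rm exp}^{(x)}(x){\downarrow}$ asserts totality of the diagonal of super-exp. The remaining step is to verify, provably in $\ea$, that this diagonal totality is equivalent to totality of the full two-variable function $\lambda x, y.\, 2^x_y$, which is precisely the extra axiom defining $\ea^+$. One direction is immediate; for the other, $\Delta_0$-provable monotonicity of $2^x_y$ in both arguments gives $2^x_y \leq 2^n_n$ for $n = \max(x, y)$, so existence of the diagonal witnesses existence of the general value.

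The only point requiring any real care is this last monotonicity argument within the bounded induction available in $\ea$, but it is completely standard and already implicit in the customary treatment of $\ea^+$; no genuine obstacle is expected.
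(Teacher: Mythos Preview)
Your proposal is correct and follows exactly the paper's approach: the paper simply says ``If we substitute $f$ with ${\rm exp}$, we get'' the corollary, and you have spelled out precisely that substitution together with the routine unpacking of both sides. The additional detail you provide (that ${\rm exp}^{(x)}(x)=2^x_x$ and that diagonal totality suffices by monotonicity) is the obvious way to make the one-line justification explicit.
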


\section{Arithmetical Soundness of $\glp_\Lambda$}

In our interest of expanding the worm principle, we have to first expand the interpretation of $\glp$ in arithmetic for modalities $[\alpha]$ where $\alpha \geq \omega$.

Let $\mathcal{L}$ be a language of arithmetic with or without a unary predicate ${\sf T}$ and let $S$ be a c.e. theory extending $\ea$ in a language extending $\mathcal{L}$.
We will prove arithmetical soundness of $\glp_\Lambda$ for a particular interpretation for which most of the work has already been done in \cite{BeklemishevPakhomov:2019:GLPforTheoriesOfTruth} by proving arithmetical soundness for the weaker system of ${\sf RC}_\Lambda$. As such, we will call onto many results from that paper, starting with some properties of partial reflection in potentially extended languages of arithmetic.
\begin{lemma}
    For all sentences $\varphi, \psi \in \mathcal{L}$ and for every $n\geq 0$, the following hold provably in $\ea$:
    \begin{itemize}
        \item If $S \vdash \varphi  \to \psi$ then $\Pi_{n+1} ^\mathcal{L}$-$\rfn (S+ \varphi) \vdash \Pi_{n+1} ^\mathcal{L}$-$\rfn (S+\psi)$;
        \item $\Pi_{n+1} ^\mathcal{L}$-$\rfn (S+\varphi) \vdash \varphi $ if $\varphi \in \Pi_{n+1}^\mathcal{L}$;
        \item $\Pi_{n+1} ^\mathcal{L}$-$\rfn (S+\varphi) \vdash \Diamond_S \varphi$.
    \end{itemize}
\end{lemma}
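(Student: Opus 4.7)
The plan is to prove each of the three clauses by a direct formalization in $\ea$ using two standard ingredients: the Hilbert--Bernays--L\"ob derivability conditions for the provability predicate $\Box_S$ (which hold in $\ea$ whenever $S$ is elementarily axiomatized), together with the formalized deduction theorem $\Box_{S+\chi}\theta \leftrightarrow \Box_S(\chi \to \theta)$. All three items reduce to fixing a witness $\pi$ from $\Pi_{n+1}^\mathcal{L}$ and manipulating the corresponding reflection instance $\Box_{S+\varphi}\pi \to \pi$ provided by the hypothesis.

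For clause (i), I would work inside $\ea$ and fix an arbitrary $\pi\in\Pi_{n+1}^\mathcal{L}$. From $S\vdash \varphi\to \psi$ and the first derivability condition one obtains $\Box_S(\varphi\to\psi)$. Assuming $\Box_{S+\psi}\pi$, the formalized deduction theorem yields $\Box_S(\psi\to\pi)$; composing this with $\Box_S(\varphi\to\psi)$ through the distribution axiom of $\Box_S$ gives $\Box_S(\varphi\to\pi)$, i.e., $\Box_{S+\varphi}\pi$. Invoking the assumed reflection instance $\Box_{S+\varphi}\pi \to \pi$ then produces $\pi$, which is exactly the desired instance of $\Pi_{n+1}^\mathcal{L}$-$\rfn(S+\psi)$. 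Since $\pi$ was arbitrary, the entire schema $\Pi_{n+1}^\mathcal{L}$-$\rfn(S+\psi)$ is derived.

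For clause (ii), one simply observes that $\varphi$ itself lies in $\Pi_{n+1}^\mathcal{L}$, and formalized necessitation applied to the trivial derivation of $\varphi$ in $S+\varphi$ gives $\ea\vdash\Box_{S+\varphi}\varphi$; the reflection instance at $\varphi$ then yields $\varphi$ directly. For clause (iii), I would instantiate the reflection schema at $\bot$, which is vacuously in $\Pi_{n+1}^\mathcal{L}$; the resulting sentence $\neg\Box_{S+\varphi}\bot$ converts, via the formalized deduction theorem, into $\neg\Box_S\neg\varphi$, i.e., $\Diamond_S\varphi$.

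No real obstacle is anticipated here: the arguments are entirely schematic and rely only on properties of $\Box_S$ that are well-known to be $\ea$-verifiable for elementarily axiomatized theories. The one subtlety worth flagging is that the formalized deduction theorem must be stated for the extended language $\mathcal{L}$ (possibly including the truth predicate ${\sf T}$), but since $S$ is assumed to be c.e.\ and elementarily axiomatizable in this extended language, the standard proof goes through unchanged.
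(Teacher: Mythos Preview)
Your proposal is correct and gives the standard argument. The paper does not supply its own proof of this lemma; it is stated as a background fact imported from \cite{BeklemishevPakhomov:2019:GLPforTheoriesOfTruth}, so there is nothing to compare against beyond noting that your argument is exactly the routine verification one would expect.
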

It is known that $\Pi_{n+1} ^\mathcal{L}$-$\rfn(S)$ is finitely axiomatizable over $\ea$ for $\mathcal{L} = \mathcal{L}_{\pa}$, which is achieved by using \emph{truth-definitions} for $\Pi_{n+1} ^{\mathcal{L}_{\pa}} $-formulas. For $\mathcal{L} \supsetneq \mathcal{L}_{\pa} $, we have the following properties for truth definitions (Theorems 12 \& 13, \cite{BeklemishevPakhomov:2019:GLPforTheoriesOfTruth}):

\begin{theorem}
    Let $\mathcal{L}$ be finite. There is a $\Pi_1 ^\mathcal{L}$-formula ${\sf Tr}$ such that for all $\Delta_0 ^{\mathcal{L}}$-formulas $\varphi(\vec{x})$,
    \begin{itemize}
        \item $\ea \vdash \forall \, \vec{x} \ \big( {\sf Tr} (\varphi (\vec{x})) \to \varphi (\vec{x}) \big)$;
        \item $\ea^\mathcal{L} \vdash \forall \, \vec{x} \ \big( {\sf Tr} (\varphi (\vec{x})) \leftrightarrow \varphi (\vec{x}) \big)$.
    \end{itemize}
    Let $\Gamma$ be either $\Pi_n ^\mathcal{L}$ or $\Sigma_n ^\mathcal{L}$ for $n>0$, then there exists a $\Gamma$-formula $ {\sf Tr}_\Gamma $ such that for each $\Gamma$-formula $\varphi(\vec{x})$,
    \[\ea^\mathcal{L} \vdash \forall \, \vec{x} \ \big( {\sf Tr}_\Gamma (\varphi (\vec{x})) \leftrightarrow \varphi (\vec{x}) \big).
    \]
\end{theorem}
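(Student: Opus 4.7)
The plan is to handle the $\Delta_0^\mathcal{L}$ case first, and then bootstrap up the arithmetic hierarchy. For the $\Delta_0^\mathcal{L}$ truth predicate, I would import the classical construction of a $\Delta_0(\exp)$-truth predicate for $\Delta_0^{\mathcal{L}_{\pa}}$-formulas (this is standard; it formalises a bounded recursive evaluator whose runtime is polynomial in the formula length and the bounds on its quantifiers, exploiting $\exp$ to code the computation history). To extend it to $\mathcal{L} \supsetneq \mathcal{L}_{\pa}$, the evaluator is augmented with ``oracle queries'' to the finitely many new predicate symbols at atomic subformulas. I would then define $\mathsf{Tr}(\varphi(\vec x))$ as the $\Pi_1^\mathcal{L}$ statement
\[
\forall e \ \big( \text{$e$ is a complete evaluation trace of $\varphi$ on input $\vec x$} \to \text{$e$ outputs true}\big),
\]
where the correctness of a trace is a $\Delta_0^\mathcal{L}$ property (one must check, at each atomic step involving a new predicate, that the recorded value agrees with the actual oracle). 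Finiteness of $\mathcal{L}$ enters here: it ensures that the evaluator's code and the hypothesis ``$e$ is a correct trace'' can be written as a single $\Delta_0^\mathcal{L}$-formula rather than an infinite schema.

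Next I would verify the two listed properties of $\mathsf{Tr}$. The forward implication $\mathsf{Tr}(\varphi(\vec x)) \to \varphi(\vec x)$ must hold in $\ea$ alone, so I would establish it by external induction on the (metalevel) structure of $\varphi$: for each of the finitely many $\Delta_0^\mathcal{L}$-clauses, $\ea$ proves that any purported trace that outputs true can only do so when $\varphi$ genuinely holds, since the trace-correctness conditions explicitly force agreement with the actual atomic facts. Note that $\ea$ need not prove existence of a trace; the implication goes through even if the $\Pi_1^\mathcal{L}$-premise is vacuous. In $\ea^\mathcal{L}$, which contains induction for $\mathcal{L}$-formulas, I would additionally show that a trace always exists by induction on the ``structural rank'' of $\varphi$ together with an internal induction on the relevant bounded quantifiers; this yields the promised equivalence $\mathsf{Tr}(\varphi(\vec x)) \leftrightarrow \varphi(\vec x)$.

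For the $\Pi_n^\mathcal{L}$ and $\Sigma_n^\mathcal{L}$ truth predicates with $n>0$, I would proceed by an external recursion on $n$ that mirrors the classical Tarskian construction. Setting $\mathsf{Tr}_{\Pi_1}(\forall \vec y\,\psi(\vec x,\vec y)) := \forall \vec y\,\mathsf{Tr}(\psi(\vec x,\vec y))$ and dually for $\Sigma_1$, and at step $n+1$ defining $\mathsf{Tr}_{\Pi_{n+1}}(\forall \vec y\, \psi) := \forall \vec y\,\mathsf{Tr}_{\Sigma_n}(\psi)$, one obtains a $\Gamma$-formula of the right complexity by construction. The equivalence in $\ea^\mathcal{L}$ then follows from the $\Delta_0^\mathcal{L}$-case by a straightforward external induction on $n$, since at each step all we are doing is commuting a block of first-order quantifiers past the inductively defined truth predicate.

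The main obstacle is the $\Delta_0^\mathcal{L}$ step and, specifically, the asymmetry between what must hold in $\ea$ versus $\ea^\mathcal{L}$. The difficulty is that $\ea$ has no induction for formulas mentioning the new predicates, so it cannot in general prove that the bounded evaluator terminates; yet it must still prove the soundness direction $\mathsf{Tr}(\varphi)\to\varphi$. The trick is precisely to formulate $\mathsf{Tr}$ as $\Pi_1^\mathcal{L}$ so that soundness reduces to a trace-by-trace check that $\ea$ can verify by external induction on the fixed syntactic structure of $\varphi$, deferring the existence of traces (which genuinely requires $\mathcal{L}$-induction) to the stronger theory $\ea^\mathcal{L}$. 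Getting the complexity of $\mathsf{Tr}$ to land exactly in $\Pi_1^\mathcal{L}$ and not higher — while keeping the trace-correctness predicate $\Delta_0^\mathcal{L}$ — is where the finiteness of $\mathcal{L}$ and a careful choice of evaluator encoding are essential.
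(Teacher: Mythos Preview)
The paper does not actually prove this theorem; it is imported verbatim from Beklemishev and Pakhomov (Theorems~12 and~13 of \cite{BeklemishevPakhomov:2019:GLPforTheoriesOfTruth}), so there is no in-paper argument to compare against. Your overall plan---build a trace-based evaluator for $\Delta_0^{\mathcal L}$ and then climb the hierarchy by prefixing quantifier blocks---is the standard one, and the bootstrapping for $\Pi_n^{\mathcal L}$ and $\Sigma_n^{\mathcal L}$ is unproblematic.

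There is, however, a genuine gap in your soundness argument for the $\Delta_0^{\mathcal L}$ case. You define $\mathsf{Tr}(\varphi)$ as the $\Pi_1^{\mathcal L}$ statement ``every correct complete trace outputs true'', and then argue that $\ea$ proves ``any purported trace that outputs true can only do so when $\varphi$ genuinely holds''. But that is soundness for the $\Sigma_1^{\mathcal L}$ dual $\exists e\,(\text{correct}(e)\wedge\text{out}(e)=1)$, not for your $\Pi_1^{\mathcal L}$ formula. Your remark that ``the implication goes through even if the $\Pi_1^{\mathcal L}$-premise is vacuous'' is false: if no correct trace exists, $\mathsf{Tr}(\varphi)$ holds trivially while $\varphi$ may fail. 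Concretely, take a countable nonstandard model $M\models\ea$, a nonstandard $B\in M$, and interpret $P$ by a subset of $[0,B]^M$ that is not coded in $M$ (such subsets exist by cardinality). For $\varphi(x)=\forall y<x\,P(y)$, a correct complete trace at $x=B$ must record $P\upharpoonright[0,B)$, hence does not exist in $M$; so $\mathsf{Tr}(\varphi(B))$ is vacuously true, yet $\varphi(B)$ is false. Thus your specific $\mathsf{Tr}$ does not satisfy $\ea\vdash\mathsf{Tr}(\varphi)\to\varphi$. The contrapositive route---construct a falsifying trace from $\neg\varphi$ by external induction on the shape of $\varphi$---runs into the same wall at bounded existentials, where one must collect sub-traces $e_y$ for all $y<t$, a $\Delta_0^{\mathcal L}$-comprehension step unavailable in base $\ea$.

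What one actually needs is a formulation of $\mathsf{Tr}$ for which the direction $\mathsf{Tr}(\varphi)\to\varphi$ can be argued in $\ea$ without ever invoking the existence of a full trace or any $\mathcal L$-collection. This requires more care in the encoding than your sketch provides (for instance, arranging that refuting witnesses never demand exhaustive enumeration over a bounded domain whose oracle slice might not be coded), and is precisely the content of the cited result.
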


For languages extending the language of arithmetic, we require a way to finitely axiomatize $\Delta_0 ^\mathcal{L}$-induction, which is given for finite $\mathcal{L}$ (Lemma 4.2 in \cite{Enayat2019-ENATDA}). Over $\ea$ we have the following theorem.
\begin{theorem}[Thm 3, \cite{BeklemishevPakhomov:2019:GLPforTheoriesOfTruth}]\label{thm:finite axiomatizability of rfn}
    For finite $\mathcal{L}$ the schema $\Pi_{n+1} ^\mathcal{L}$-$\rfn (S)$ is finitely axiomatizable by
    \[ i\delta^\mathcal{L} \wedge \forall \varphi \in \Pi_{n+1}^\mathcal{L} \, \big(\Box_S \varphi  \to {\sf Tr}_{\Pi_{n+1} ^\mathcal{L}} (\varphi)\big),
    \]
    where $i\delta^\mathcal{L}$ is a $\Pi_1 ^\mathcal{L}$-axiomatization of $I\Delta_0 ^\mathcal{L}$ and ${\sf Tr}_{\Pi_{m} ^\mathcal{L}}$ is the truth definition for $\Pi_m ^\mathcal{L}$-formulas.
\end{theorem}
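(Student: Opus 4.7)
The plan is to adapt the known finite-axiomatizability of $\Pi_{n+1}\text{-}\rfn(S)$ for the pure arithmetical case $\mathcal{L}=\mathcal{L}_{\pa}$ to arbitrary finite $\mathcal{L}\supseteq \mathcal{L}_{\pa}$. The central tool is the partial truth predicate ${\sf Tr}_{\Pi_{n+1}^\mathcal{L}}$ from the preceding theorem, which over $\ea^\mathcal{L}$ satisfies the external Tarski biconditional ${\sf Tr}_{\Pi_{n+1}^\mathcal{L}}(\ulcorner\psi(\dot{\vec x})\urcorner)\leftrightarrow \psi(\vec x)$ for every standard $\psi\in \Pi_{n+1}^\mathcal{L}$. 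The role of the extra conjunct $i\delta^\mathcal{L}$ is to bridge $\ea$ up to $\ea^\mathcal{L}$, supplying the $\Delta_0^\mathcal{L}$-induction needed for these Tarski biconditionals to be available over $\ea$ alone.

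Both directions of the equivalence must be verified. The direction $\ea+\Phi \vdash \Pi_{n+1}^\mathcal{L}\text{-}\rfn(S)$ is the easy one: given a standard $\psi\in \Pi_{n+1}^\mathcal{L}$, instantiating the universal conjunct of $\Phi$ at $\ulcorner\psi\urcorner$ yields $\Box_S\psi \to {\sf Tr}_{\Pi_{n+1}^\mathcal{L}}(\ulcorner\psi\urcorner)$, and combining this with the Tarski biconditional for $\psi$ (which is available since $i\delta^\mathcal{L}$ supplies $\ea^\mathcal{L}$) yields the schema instance $\Box_S\psi\to\psi$. For the converse $\ea+\Pi_{n+1}^\mathcal{L}\text{-}\rfn(S)\vdash \Phi$, note that $i\delta^\mathcal{L}$ is a $\Pi_1^\mathcal{L}\subseteq \Pi_{n+1}^\mathcal{L}$ sentence provable in $S$ (which is assumed to contain $\ea^\mathcal{L}$), so the second clause of the preceding lemma recovers it from the schema. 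The universal conjunct is obtained by applying the standard internal-uniformity argument to the $\Pi_{n+1}^\mathcal{L}$ template formula $\vartheta(x):=\neg \Box_S x \vee {\sf Tr}_{\Pi_{n+1}^\mathcal{L}}(x)$; this argument transfers verbatim from the classical $\mathcal{L}_{\pa}$ case since the partial truth predicate behaves identically in the extended language, with the details being as in \cite{BeklemishevPakhomov:2019:GLPforTheoriesOfTruth}.

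The principal obstacle is establishing a finite $\Pi_1^\mathcal{L}$-axiomatization $i\delta^\mathcal{L}$ of $I\Delta_0^\mathcal{L}$ in the extended language — the Enayat--Visser result cited above. Without this, $\Delta_0^\mathcal{L}$-induction would contribute an infinite schema and defeat finite axiomatizability. The $\Pi_1^\mathcal{L}$-complexity is essential in both directions: in the forward direction it ensures that $i\delta^\mathcal{L}$ is mild enough to combine cleanly with the universal reflection conjunct, and in the converse direction it guarantees that $i\delta^\mathcal{L}\in \Pi_{n+1}^\mathcal{L}$ is itself susceptible to $\Pi_{n+1}^\mathcal{L}$-reflection over $S$, so that the schema can recover it. Once $i\delta^\mathcal{L}$ is in hand, the classical truth-predicate-based finite axiomatization of partial reflection goes through.
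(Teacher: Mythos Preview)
The paper does not give its own proof of this statement: it is quoted verbatim as Theorem~3 of \cite{BeklemishevPakhomov:2019:GLPforTheoriesOfTruth} and used as a black box, so there is no in-paper argument to compare your proposal against.

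That said, your sketch is essentially the standard proof one finds in the cited source. The two directions and the three ingredients you isolate --- the Enayat--Visser finite $\Pi_1^\mathcal{L}$-axiomatization $i\delta^\mathcal{L}$ of $I\Delta_0^\mathcal{L}$, the partial truth predicate ${\sf Tr}_{\Pi_{n+1}^\mathcal{L}}$, and the (formalized) Tarski biconditionals --- are exactly what is needed. One small sharpening: for the converse direction, the clean way to obtain the universal conjunct is not via a generic ``internal-uniformity'' argument but by applying a single instance of the reflection schema to the $\Pi_{n+1}^\mathcal{L}$ formula ${\sf Tr}_{\Pi_{n+1}^\mathcal{L}}(x)$ itself, and then using the uniformly provable (in $S$) equivalence $\forall\,\varphi{\in}\Pi_{n+1}^\mathcal{L}\ \Box_S\big(\varphi \leftrightarrow {\sf Tr}_{\Pi_{n+1}^\mathcal{L}}(\ulcorner\varphi\urcorner)\big)$ to pass from $\Box_S\varphi$ to $\Box_S{\sf Tr}_{\Pi_{n+1}^\mathcal{L}}(\dot\varphi)$. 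Also note that recovering $i\delta^\mathcal{L}$ from the schema tacitly assumes $S\vdash i\delta^\mathcal{L}$, i.e.\ that $S$ contains $I\Delta_0^\mathcal{L}$; this is the intended setting in \cite{BeklemishevPakhomov:2019:GLPforTheoriesOfTruth} but is worth making explicit.
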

From here on, we will be using $\Pi_{n+1} ^\mathcal{L}$-$\rfn (S)$ and the formula axiomatizing it interchangeably where applicable. By $i\delta^\mathcal{L}$ we will always denote the $\Pi_1 ^\mathcal{L}$-axiomatization of $I\Delta_0 ^\mathcal{L}$. 

\begin{notation}
    Let $\mathcal{L}$ be finite, then given a formula $\varphi \in \mathcal{L}$, we write $[n]_S ^\mathcal{L} \varphi$ as shorthand for \\ $\exists \, \theta {\in} \Sigma_{n+1} ^\mathcal{L}  \big({\sf Tr}_{\Sigma_{n+1} ^\mathcal{L}} (\theta) \wedge \Box_S (\theta \to \varphi)\big) $.
\end{notation}

The lemma below corresponds to the distributivity axiom L1. and we will be using it to prove an arithmetical soundness of $\glp_\Lambda$.

\begin{lemma}\label{lem: D_0 induction on finite language gives distributivity}
   If $\mathcal{L}$ is finite, then 
   $ \ea^\mathcal{L} \vdash [n]_S ^\mathcal{L} (\varphi\to \psi) \to ( [n]_S ^\mathcal{L} \varphi \to [n]_S ^\mathcal{L} \psi) $.
\end{lemma}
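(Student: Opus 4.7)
The plan is to argue inside $\ea^\mathcal{L}$ directly from the definition of $[n]_S^\mathcal{L}$. Assume both $[n]_S^\mathcal{L}(\varphi \to \psi)$ and $[n]_S^\mathcal{L}\varphi$ and extract witnesses: a $\Sigma_{n+1}^\mathcal{L}$ sentence $\theta_1$ with $\text{Tr}_{\Sigma_{n+1}^\mathcal{L}}(\theta_1)$ and $\Box_S(\theta_1 \to (\varphi \to \psi))$, and a $\Sigma_{n+1}^\mathcal{L}$ sentence $\theta_2$ with $\text{Tr}_{\Sigma_{n+1}^\mathcal{L}}(\theta_2)$ and $\Box_S(\theta_2 \to \varphi)$. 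The obvious candidate witness for $\psi$ is the conjunction $\theta := \theta_1 \wedge \theta_2$, suitably rewritten as a genuine $\Sigma_{n+1}^\mathcal{L}$ sentence by pulling quantifiers out in the standard way (using pairing on the outer existential block and $\Delta_0^\mathcal{L}$ bounded reasoning to interleave the rest). This syntactic construction is elementary in the codes of $\theta_1,\theta_2$, so $\theta$ is produced by an $\ea$-definable operation on $\Sigma_{n+1}^\mathcal{L}$-codes.

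With $\theta$ in hand, two facts have to be checked. The first is that $\text{Tr}_{\Sigma_{n+1}^\mathcal{L}}(\theta) \leftrightarrow \text{Tr}_{\Sigma_{n+1}^\mathcal{L}}(\theta_1) \wedge \text{Tr}_{\Sigma_{n+1}^\mathcal{L}}(\theta_2)$ provably in $\ea^\mathcal{L}$; this is a compositionality property of the truth definition $\text{Tr}_{\Sigma_{n+1}^\mathcal{L}}$ constructed in the theorem quoted above, and it is exactly the reason why the definition is set up to commute with the Boolean connectives. The second is that $\Box_S(\theta \to \psi)$ follows from $\Box_S(\theta_1 \to (\varphi \to \psi))$ and $\Box_S(\theta_2 \to \varphi)$, which is a case of formalized propositional reasoning under $\Box_S$ and needs nothing more than the provable closure of $\Box_S$ under modus ponens and the deduction theorem, all available in $\ea$.

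Putting these ingredients together: from the two witness conjuncts we get $\text{Tr}_{\Sigma_{n+1}^\mathcal{L}}(\theta)$ by compositionality, and from the two provability conjuncts together with the propositional tautology $(\theta_1 \wedge \theta_2) \to ((\varphi \to \psi) \wedge \varphi \to \psi)$ formalized under $\Box_S$ we get $\Box_S(\theta \to \psi)$. Existentially quantifying over $\theta$ yields $[n]_S^\mathcal{L}\psi$, completing the derivation inside $\ea^\mathcal{L}$.

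The only nontrivial step is the pair of compositionality properties, namely that the syntactic conjunction operation on $\Sigma_{n+1}^\mathcal{L}$-codes produces a formula whose $\text{Tr}_{\Sigma_{n+1}^\mathcal{L}}$-value is the conjunction of the two truth-values, and analogously for provability under $S$. These should follow by inspection of the truth definition from the preceding theorem together with Theorem \ref{thm:finite axiomatizability of rfn}; finiteness of $\mathcal{L}$ is essential here because it is precisely what makes $\text{Tr}_{\Sigma_{n+1}^\mathcal{L}}$ a single $\Sigma_{n+1}^\mathcal{L}$-formula with the required compositional behaviour.
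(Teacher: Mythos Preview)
Your proposal is correct and follows essentially the same approach as the paper: extract witnesses $\theta_1,\theta_2$, form their conjunction, use compositionality of ${\sf Tr}_{\Sigma_{n+1}^{\mathcal L}}$ for the truth clause, and use formalized propositional reasoning under $\Box_S$ for the provability clause. The paper's proof is in fact terser than yours; it simply writes $\theta_1\wedge\theta_2$ and appeals directly to the Tarski biconditional schema for ${\sf Tr}_{\Sigma_{n+1}^{\mathcal L}}$, whereas you spell out the prenexing/pairing that brings the conjunction into genuine $\Sigma_{n+1}^{\mathcal L}$ form and emphasize that the compositionality must hold uniformly (for variable codes), which is the more careful way to put it.
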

\begin{proof}
Working within $\ea ^\mathcal{L}$, assume that $\exists \, \theta_1 \in \Sigma_{n+1} ^\mathcal{L} \big({\sf Tr}_{\Sigma_{n+1} ^\mathcal{L}} (\theta_1) \wedge \Box_S (\theta_1 \to (\varphi\to \psi))\big) $ and ${\exists \, \theta_2 {\in} \Sigma_{n+1} ^\mathcal{L}}\ \big({\sf Tr}_{\Sigma_{n+1} ^\mathcal{L}} (\theta_2) \wedge \Box_S (\theta_2 \to \varphi)\big) $. Since $\ea^\mathcal{L}\vdash {\sf Tr}_{\Sigma_{n+1} ^\mathcal{L}}(\varphi) \leftrightarrow \varphi $ for every $\Sigma_{n+1} ^\mathcal{L}$-formula $\varphi$, it is then given that $\ea^\mathcal{L} \vdash {\sf Tr}_{\Sigma_{n+1} ^\mathcal{L}} (\theta_1\wedge \theta_2) \leftrightarrow {\sf Tr}_{\Sigma_{n+1} ^\mathcal{L}} (\theta_1) \wedge {\sf Tr}_{\Sigma_{n+1} ^\mathcal{L}} (\theta_2) $. Thus we get ${\sf Tr}_{\Sigma_{n+1} ^\mathcal{L}} (\theta_1 \wedge\theta_2)  \wedge \, \Box_S \big((\theta_1\wedge\theta_2) \to \psi\big)$.
\end{proof}
We will be focusing on languages extending that of arithmetic via the addition of so-called truth predicates. These are unary predicates with the purpose of expressing the truth of formulas $-$a task achieved by expanding our base theories of arithmetic with the theory of the \emph{Uniform Tarski Biconditionals}.
\begin{definition}
    Let $\utb_\mathcal{L}$ be the $\mathcal{L} \cup {\sf T}$ theory --where ${\sf T}$ is a unary truth predicate not in $\mathcal{L}$-- axiomatized by the schema
    $\forall \vec{x} \big(\varphi(\vec{x}) \leftrightarrow {\sf T}( \ulcorner\varphi(\dot{\vec{x}}) \urcorner)\big)$,  for every $\mathcal{L}$-formula $ \varphi$.
\end{definition}
This process of extending the base language via the addition of truth predicates can be iterated over ordinals.
For that we assume that given an ordinal $\Lambda$ there are $\Delta_0$-formulas in the base language of $\rm EA$; $x < \Lambda$ and $x \leq _\Lambda y$, roughly expressing that ``$x$ is the code of an ordinal in $\Lambda"$ and ``$x, y$ code ordinals $\alpha, \beta$ with $\alpha \leq \beta"$ respectively. More formally, we want the following to hold:
\begin{itemize}
    \item For every ordinal $\alpha < \Lambda $, it holds that $\mathbb{N}\vDash \ulcorner\alpha\urcorner <_\Lambda \Lambda$;
    \item for all ordinals $\alpha, \beta < \Lambda$, it holds that $\alpha \leq \beta $ iff $\mathbb{N}\vDash \ulcorner\alpha \urcorner \leq_\Lambda \ulcorner\beta\urcorner$;
    \item $\ea \vdash \ ``\leq_\Lambda \textnormal{ is a partial order}"$.
\end{itemize}
Notice that we make no demands on $\leq_\Lambda $ being a well order or even linear. Since both $x <_\Lambda \Lambda$ and $x \leq _\Lambda y$ are $\Sigma_1$-formulas, we can use $\Sigma_1$-completeness to have for every representable theory $S \supseteq \ea$ that $\mathbb{N}\vDash x <_\Lambda \Lambda $ implies $\Box_S \dot{x} <_\Lambda \Lambda$, and similarly $\mathbb{N}\vDash x <_\Lambda y$ implies $\Box_S \dot{x} <_\Lambda \dot{y}$.
For the remainder of this paper we will write $\alpha < \beta $ instead of $ \ulcorner\alpha\urcorner <_\Lambda \ulcorner\beta \urcorner $ and $\alpha < \Lambda $ instead of $ \ulcorner\alpha\urcorner <_\Lambda \Lambda$.

With all that in mind, we can return to extending the base language with iterated truth predicates.
\begin{definition}
    Given an at most finite extension of the language of arithmetic $\mathcal{L}$, let $\mathcal{L}_\alpha := \mathcal{L} \cup \{ {\sf T}_\beta : \beta < \alpha  \}$. We then define $\utb_\alpha$ as the $\mathcal{L}_{\alpha+1}$ theory $\utb_{\mathcal{L}_\alpha}[ {\sf T } \leftarrow {\sf T}_{\alpha}  ]$.
    Additionally, we define:
    \[\utb_{<\alpha}:= \bigcup_{\beta<\alpha} \utb_\beta, \ \ \ \  \utb_{\leq \alpha} := \utb_{<\alpha} \cup\, \utb_\alpha.
    \]
    Given an ordinal $\alpha$, we write 
    \[\utb_{\floor*{\alpha}} := 
    \begin{cases}
        \utb_\beta, &\text{ if } \alpha = \omega(1+\beta) + n;\\
        \emptyset, &\text{ if } \alpha = n.
    \end{cases}
    \]
    Observe that the $\beta$ above is unique for given $\alpha$.
\end{definition}

 Typically, the language $\mathcal{L}_\alpha$ is going to be infinite. So in order to make use of Theorem \ref{thm:finite axiomatizability of rfn}, we will use a translation of formulas to a finite fragment of the language as is done in \cite{BeklemishevPakhomov:2019:GLPforTheoriesOfTruth}. Given an $\mathcal{L}$-formula $\varphi$ and some ordinal $\alpha$, let $\varphi^\bullet$ denote the result of the simultaneous substitution of ${\sf T}_\alpha({\sf T}_\beta(\dot{t})) $ for ${\sf T}_\beta(t)$ in $\varphi$ for every $\beta< \alpha$ (not substituting inside the terms $t$). Then we write $\utb_{\leq \alpha}^\bullet$ to denote the $\mathcal{L}_{\alpha+1}$-theory axiomatized by $\{ \varphi^\bullet : \varphi \in \utb_{\leq \alpha} \}$.

\begin{lemma}\label{lem: UTB<a in a finite language}
    For all $\varphi \in \mathcal{L}_{\alpha+1}$,
    \begin{itemize}
        \item $\ea + \utb_\alpha \vdash \varphi \leftrightarrow \varphi ^\bullet$;
        \item $\ea + \utb_{\leq \alpha} \vdash \varphi$ iff $\ea +\utb_{\leq \alpha} ^\bullet \vdash \varphi^\bullet $
    \end{itemize}
\end{lemma}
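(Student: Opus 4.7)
The plan is to prove the first bullet by structural induction on $\varphi$ and then derive the second bullet from it combined with the fact that $\bullet$ is a syntactic translation that fixes pure $\mathcal{L}$-formulas.

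For the first bullet, I induct on the build-up of $\varphi\in\mathcal{L}_{\alpha+1}$. On pure $\mathcal{L}$-atoms and on atoms of the form ${\sf T}_\alpha(t)$, the $\bullet$-operation is the identity, so the biconditional is a tautology. The only non-trivial base case is an atom ${\sf T}_\beta(t)$ with $\beta<\alpha$, where $\varphi^\bullet={\sf T}_\alpha(\ulcorner {\sf T}_\beta(\dot t)\urcorner)$. Since $\beta<\alpha$ the expression ${\sf T}_\beta(x)$ is a legitimate $\mathcal{L}_\alpha$-formula, so instantiating the $\utb_\alpha$-schema at $\psi(x):={\sf T}_\beta(x)$ and specialising to $t$ gives precisely ${\sf T}_\beta(t)\leftrightarrow {\sf T}_\alpha(\ulcorner {\sf T}_\beta(\dot t)\urcorner)$. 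The inductive steps for Boolean connectives and quantifiers are immediate, since by design $\bullet$ substitutes only at non-term occurrences of ${\sf T}_\beta$, leaving bound variables and subterms alone, so $\bullet$ commutes with these connectives.

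For the second bullet, the right-to-left direction is a direct consequence of the first: working inside $\ea+\utb_{\leq\alpha}\supseteq \ea+\utb_\alpha$, every formula $\psi\in\mathcal{L}_{\alpha+1}$ satisfies $\psi\leftrightarrow\psi^\bullet$, so $\ea+\utb_{\leq\alpha}\vdash\utb_{\leq\alpha}^\bullet$ axiom-by-axiom, and $\varphi^\bullet\leftrightarrow\varphi$ lifts any proof of $\varphi^\bullet$ from $\ea+\utb_{\leq\alpha}^\bullet$ to a proof of $\varphi$ from $\ea+\utb_{\leq\alpha}$.

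The left-to-right direction invokes the translation lemma: $\bullet$ fixes terms, fixes $\mathcal{L}$-formulas (hence $\ea^\bullet=\ea$), and commutes with $\neg,\wedge,\vee,\to,\forall,\exists$. Consequently logical axioms and inference rules are preserved, and applying $\bullet$ to each step of a derivation in $\ea+\utb_{\leq\alpha}$ yields a derivation in $\ea+\utb_{\leq\alpha}^\bullet$ of $\varphi^\bullet$. I expect the main delicate point to be the atomic ${\sf T}_\beta$-case of the first bullet---one must genuinely use that a lower truth predicate can be folded into a higher one via the $\utb_\alpha$-schema---together with a careful check that the clause ``not substituting inside the terms $t$'' really does make $\bullet$ commute with syntactic substitution, so that the quantifier rules translate correctly and the equality $(\psi(t))^\bullet=\psi^\bullet(t)$ holds.
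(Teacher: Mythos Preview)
The paper does not give its own proof of this lemma; it is stated without argument as one of the background facts imported from \cite{BeklemishevPakhomov:2019:GLPforTheoriesOfTruth}. Your proposal is correct and is the natural argument: the first item is a structural induction whose only contentful case is the atom ${\sf T}_\beta(t)$ with $\beta<\alpha$, handled exactly by one instance of the $\utb_\alpha$-schema, and the second item follows by translating derivations along $\bullet$ (left to right) and by invoking the first item inside $\ea+\utb_{\leq\alpha}$ (right to left). Your caveat about checking $(\psi(t))^\bullet=\psi^\bullet(t)$ is the right thing to flag; it holds because the Feferman dot term $\ulcorner{\sf T}_\beta(\dot x)\urcorner$ has $x$ as a genuine free variable of the term, so ordinary term substitution does the job.
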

It is formalizable in $\ea$ that for any c.e. $\mathcal{L}$-theory $S\supseteq \ea$, the theory $S+\utb$ is a conservative extension over $S$ for $\mathcal{L}$-formulas \cite{Halbach:2014:AxiomaticTheoriesTruth}. In particular, given $\alpha < \beta$ and $S\supseteq \ea + \utb_{<\alpha}$ a c.e. $\mathcal{L}_\alpha$-theory, then $S+\utb_{<\beta}$ is a conservative extension over $S$ for $\mathcal{L}_{\alpha}$-formulas \cite{BeklemishevPakhomov:2019:GLPforTheoriesOfTruth}. 

From here on, we will assume that $\mathcal{L}\supseteq\mathcal{L}_{\pa}$ is at most a finite extension of the language of arithmetic. For a given elementary well-ordering $ (\Lambda, <) $, we expand it into an ordering of $ {(\omega(1+\Lambda),<)} $ by encoding $ \omega\alpha +n $ as pairs $ \la \alpha,n \ra $ with the expected ordering on them.
\begin{definition}[Hyperarithmetical hierarchy]
	For ordinals up to $ \omega(1+\Lambda) $, we define the hyperarithmetical hierarchy as ($\Sigma_\alpha$ is defined similarly):
	\begin{itemize}
		\item $\Pi_n := \Pi_n ^{\mathcal{L}} $, for every $ n<\omega $;
		\item $ \Pi_{\omega(1+\alpha)+n} := \Pi_{n+1}^{\mathcal{L}_\alpha}({\sf T}_\alpha) $;
		\item For $ \lambda $ a limit ordinal, we denote $ \Pi_{<\lambda}:= \bigcup_{\alpha<\lambda} \Pi_\alpha $.
	\end{itemize}
	
\end{definition}
For any theory $ S $ and for every $ \alpha, \lambda < \omega(1 + \Lambda) $, where $ \lambda $ is a limit ordinal, we define $R_\alpha (S) := \Pi_{1+\alpha}\textnormal{-}\rfn(S)$ and $R_{<\lambda} (S ) := \Pi_{<\lambda}\textnormal{-}\rfn(S)$.
Using Lemma \ref{lem: UTB<a in a finite language} and Theorem \ref{thm:finite axiomatizability of rfn}, we obtain:

\newpage
\begin{proposition}[Proposition 5.4 \cite{BeklemishevPakhomov:2019:GLPforTheoriesOfTruth}] \label{prop: UTB interraction with reflection}\
	\begin{enumerate}[label=(\roman*)] 
		\item If $ S \supseteq \ea + \utb_{\alpha} $, then over $ \ea + \utb _\alpha $,
		\[R_{\omega(1+\alpha)+n}(S) \equiv \Pi_{n+1} ^{\mathcal{L}} ({\sf T}_\alpha) \textnormal{-}\rfn(S);
		\]
		\item If $ S \supseteq \ea + \utb_{\alpha} $ and $ \beta = \omega(1+ \alpha) +n $, then $ R_\beta(S) $ is finitely axiomatizable over $ \ea + \utb_\alpha $;
		\item If $ S \supseteq \ea + \utb_{<\alpha} $, then over $ \ea + \utb _{<\alpha} $,
		\[ R_{<\omega(1+\alpha)}(S)\equiv \mathcal{L}_\alpha\mbox{-}\rfn(S)\equiv \{R_\beta(S): \beta < \omega(1+\alpha) \}.
		\]
	\end{enumerate}
\end{proposition}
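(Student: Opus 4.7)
The overall strategy is to reduce all three parts to the already-established \textbf{finite-language} machinery: Theorem~\ref{thm:finite axiomatizability of rfn} together with the translation lemma~\ref{lem: UTB<a in a finite language}, which lets us compress the infinite language $\mathcal{L}_\alpha$ down to the \emph{finite} extension $\mathcal{L} \cup \{{\sf T}_\alpha\}$ modulo $\utb_\alpha$. The plan is to work through (i), then deduce (ii) as a corollary of (i) combined with Theorem~\ref{thm:finite axiomatizability of rfn}, and finally handle (iii) by unwinding the union and observing that every $\mathcal{L}_\alpha$-formula lives at some bounded level of the hyperarithmetical hierarchy.

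For (i), I would first unpack the definitions. By construction, $R_{\omega(1+\alpha)+n}(S) \equiv \Pi_{1+(\omega(1+\alpha)+n)}\text{-}\rfn(S)$, and since $1+\omega\cdot\gamma=\omega\cdot\gamma$ for $\gamma\geq 1$, this is $\Pi_{\omega(1+\alpha)+n}\text{-}\rfn(S)$, which by the definition of the hyperarithmetical hierarchy equals $\Pi_{n+1}^{\mathcal{L}_\alpha}({\sf T}_\alpha)\text{-}\rfn(S)$. The only thing left is to replace $\mathcal{L}_\alpha$ by $\mathcal{L}$. For this, note that any $\Pi_{n+1}^{\mathcal{L}_\alpha}({\sf T}_\alpha)$-formula $\varphi$ has all atomic subformulas of the shape ${\sf T}_\beta(t)$ for $\beta\leq\alpha$; the $\bullet$-translation rewrites each such subformula with $\beta<\alpha$ as ${\sf T}_\alpha({\sf T}_\beta(\dot t))$, without altering quantifier prefixes, so $\varphi^\bullet$ is a $\Pi_{n+1}^{\mathcal{L}}({\sf T}_\alpha)$-formula. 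By Lemma~\ref{lem: UTB<a in a finite language}, $\ea+\utb_\alpha\vdash\varphi\leftrightarrow\varphi^\bullet$, and one quickly checks that $\Box_S\varphi$ is provably equivalent to $\Box_S\varphi^\bullet$ over $\ea$ as well (both theories prove the same $\bullet$-translated sentences). Instantiating the reflection schema through this equivalence yields the desired equivalence in both directions over $\ea+\utb_\alpha$.

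Part (ii) is then immediate: by (i), over $\ea+\utb_\alpha$ the schema $R_{\omega(1+\alpha)+n}(S)$ is equivalent to $\Pi_{n+1}^{\mathcal{L}}({\sf T}_\alpha)\text{-}\rfn(S)$. The language $\mathcal{L}\cup\{{\sf T}_\alpha\}$ is finite (since $\mathcal{L}$ is assumed to be at most a finite extension of $\mathcal{L}_{\pa}$), so Theorem~\ref{thm:finite axiomatizability of rfn} applies: the schema is finitely axiomatizable by the conjunction of $i\delta^{\mathcal{L}\cup\{{\sf T}_\alpha\}}$ and the single universal sentence $\forall\varphi\in\Pi_{n+1}^{\mathcal{L}}({\sf T}_\alpha)\,(\Box_S\varphi\to{\sf Tr}_{\Pi_{n+1}^{\mathcal{L}}({\sf T}_\alpha)}(\varphi))$.

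For (iii), the second equivalence $\mathcal{L}_\alpha\text{-}\rfn(S)\equiv\{R_\beta(S):\beta<\omega(1+\alpha)\}$ will come from the observation that any single $\mathcal{L}_\alpha$-formula $\varphi$ mentions only finitely many truth predicates ${\sf T}_{\beta_1},\ldots,{\sf T}_{\beta_k}$ with $\beta_i<\alpha$, so picking $\beta^*=\max_i\beta_i<\alpha$ gives $\varphi\in\mathcal{L}_{\beta^*+1}$ and hence $\varphi\in\Pi_{n+1}^{\mathcal{L}_{\beta^*}}({\sf T}_{\beta^*})=\Pi_{\omega(1+\beta^*)+n}$ for some $n$, i.e.~$\varphi\in\Pi_\gamma$ for some $\gamma<\omega(1+\alpha)$. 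Conversely, every $\Pi_\gamma$ with $\gamma<\omega(1+\alpha)$ is contained in $\mathcal{L}_\alpha$. This gives the schema-level equivalence. The first equivalence $R_{<\omega(1+\alpha)}(S)\equiv\mathcal{L}_\alpha\text{-}\rfn(S)$ is then essentially by definition of $R_{<\lambda}$ as the union $\bigcup_{\beta<\lambda}R_\beta$. The main bookkeeping obstacle is verifying that the $\bullet$-translation in part (i) respects the $\Pi_{n+1}$ quantifier alternation structure exactly; once one is careful that ${\sf T}_\alpha({\sf T}_\beta(\dot t))$ is an atomic formula of $\mathcal{L}\cup\{{\sf T}_\alpha\}$ and therefore does not disturb the prefix, everything falls into place.
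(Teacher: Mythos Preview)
Your proposal is correct and follows precisely the approach the paper indicates: the paper does not spell out a proof but simply states that the proposition is obtained ``Using Lemma~\ref{lem: UTB<a in a finite language} and Theorem~\ref{thm:finite axiomatizability of rfn}'' (citing \cite{BeklemishevPakhomov:2019:GLPforTheoriesOfTruth} for details), and your argument unpacks exactly this---the $\bullet$-translation to collapse $\mathcal{L}_\alpha$ to the finite language $\mathcal{L}\cup\{{\sf T}_\alpha\}$, followed by the finite-language axiomatizability theorem. Your treatment of the quantifier-prefix preservation under $\bullet$ and of the equivalence $\Box_S\varphi\leftrightarrow\Box_S\varphi^\bullet$ (via $S\supseteq\ea+\utb_\alpha$) fills in the only nontrivial gaps.
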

Now we can define the interpretation of $[\alpha]\varphi$ that we will be using for the soundness proof.
\begin{definition}
    We will write $[\alpha]_S \varphi$ as a shorthand for the finite axiomatization of $\neg R_\alpha (S+ \neg \varphi)$ given by Statement (ii) of Proposition \ref{prop: UTB interraction with reflection}. which for $\alpha = \omega (1+\beta) +n$, is the $\Sigma_\alpha$-formula: 
    \[i\delta^{\mathcal{L}({\sf T}_\beta)} \to \exists \, \theta \in \Sigma_{n+1} ^{\mathcal{L}({\sf T}_\beta)} \big({\sf Tr}_{\Sigma_{n+1} ^{\mathcal{L}({\sf T}_\beta)}} (\theta) \wedge \Box_S (\theta \to \varphi)\big) ,
    \]
    where $i\delta^{\mathcal{L}({\sf T}_\beta)}$ is a finite $\Pi_1 ^{\mathcal{L}({\sf T}_\beta)}$-axiomatization of $I\Delta_0 ^{\mathcal{L}({\sf T}_\beta)}$.\\
    Similarly, by $\la \alpha \ra \varphi$ we will denote the finite axiomatization of $R_\alpha (S+ \varphi)$.
\end{definition}

An arithmetical realization is a function $(\cdot)^* _S$ from the language of $\glp_\Lambda$ to $\mathcal{L}_\Lambda$, mapping propositional variables to formulas of $\mathcal{L}_\Lambda$ and preserving the logical operations:
$(\varphi \wedge \psi)^*_S := \varphi_S^* \wedge \psi_S ^*$, $(\neg \varphi)^*_S := \neg \varphi_S^*$, and $([\alpha] \varphi)^*_S := [\alpha]_S \varphi_S ^*$. $\glp_\Lambda $ is sound for this interpretation.

\begin{theorem}\label{Arithmetical soundness for GLP L}
    For every $S \supseteq \ea + \utb_{<\Lambda}$ and every formula $\varphi$ in the language of $\glp_\Lambda$,
    \[\glp_\Lambda \vdash \varphi \Rightarrow \ea + \utb_{<\Lambda} \vdash (\varphi)^*_S, \text{ for every realization } (\cdot)^*_S \text{ of the variables of } \varphi.
    \]
\end{theorem}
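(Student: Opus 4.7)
The plan is to proceed by induction on the length of a derivation of $\varphi$ in $\glp_\Lambda$. Propositional tautologies and modus ponens are immediate, and necessitation reduces via formalized $S$-derivability to the fact that if $\ea+\utb_{<\Lambda}\vdash\psi^*_S$ then $S$ proves $\psi^*_S$, so the trivially true $\Sigma_\alpha$-witness $\top$ establishes $[\alpha]_S\psi^*_S$. The real work lies in verifying the four modal axiom schemata.

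For the distribution axiom $[\alpha](\varphi\to\psi)\to([\alpha]\varphi\to[\alpha]\psi)$ with $\alpha=\omega(1{+}\beta)+n$, I would observe that the formula $[\alpha]_S$ only references truth predicates with indices $\leq\beta$. Using the bullet translation from Lemma~\ref{lem: UTB<a in a finite language}, derivability in $\ea+\utb_{<\Lambda}$ of the relevant instance reduces to derivability in $\ea^{\mathcal{L}({\sf T}_\beta)}+\utb_\beta^\bullet$, i.e.~a finite language setting where Lemma~\ref{lem: D_0 induction on finite language gives distributivity} applies directly.

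For monotonicity $[\beta]\varphi\to[\alpha]\varphi$ with $\beta<\alpha$, the point is that any true $\Sigma_\beta$-witness $\theta$ for $[\beta]_S\varphi$ (together with $S\vdash\theta\to\varphi$) can be upgraded to a true $\Sigma_\alpha$-witness. When $\beta=\omega(1{+}\beta')+m$ and $\alpha=\omega(1{+}\alpha')+n$, this uses that higher truth predicates subsume lower ones (via the substitution built into the definition of $\utb_\alpha$ and Proposition~\ref{prop: UTB interraction with reflection}). For negative introspection $\langle\beta\rangle\varphi\to[\alpha]\langle\beta\rangle\varphi$ with $\beta<\alpha$, I would observe that $\langle\beta\rangle\varphi$ is $\Pi_\beta$-axiomatizable, hence provably equivalent to a $\Sigma_\alpha$-formula inside $\ea+\utb_{<\Lambda}$; then $\langle\beta\rangle\varphi$ itself serves as its own true $\Sigma_\alpha$-witness, together with the trivial $S$-provable implication $\langle\beta\rangle\varphi\to\langle\beta\rangle\varphi$, giving $[\alpha]_S\langle\beta\rangle\varphi$.

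L\"ob's axiom is handled by a formalized L\"ob argument. It suffices to verify in $\ea+\utb_{<\Lambda}$ that the predicate $[\alpha]_S$ satisfies the Hilbert--Bernays--L\"ob conditions: necessitation and distribution have already been established, and the third (``transitivity'') condition $[\alpha]_S\varphi\to[\alpha]_S[\alpha]_S\varphi$ amounts to provable $\Sigma_\alpha$-completeness for the $\Sigma_\alpha$-formula $[\alpha]_S\varphi$ itself, which follows from the standard formalized $\Sigma_n^{\mathcal{L}({\sf T}_\beta)}$-completeness in the corresponding finite fragment (again via the bullet translation). The usual diagonal fixed-point construction then produces a sentence witnessing L\"ob's theorem for $[\alpha]_S$. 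The main obstacle throughout is the mismatch between the potentially infinite language $\mathcal{L}_\Lambda$ in which we reason and the finite fragment $\mathcal{L}_{\beta+1}$ to which the truth-definition machinery applies; the uniform strategy for overcoming it is to systematically invoke Lemma~\ref{lem: UTB<a in a finite language} and Proposition~\ref{prop: UTB interraction with reflection} so that each verification can be localized to a finite sublanguage, where the results from~\cite{BeklemishevPakhomov:2019:GLPforTheoriesOfTruth} cited in the preliminaries apply verbatim.
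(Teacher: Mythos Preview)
Your proposal is correct and follows essentially the same route as the paper: verify the derivability conditions (necessitation, distribution, provable $\Sigma_\alpha$-completeness giving $[\alpha]_S\varphi\to[\alpha]_S[\alpha]_S\varphi$, monotonicity, negative introspection) using the finite-language reduction machinery, and then conclude L\"ob. The one organizational difference is in the final step: rather than redoing the diagonal fixed-point construction for each $[\alpha]_S$, the paper observes that L\"ob already holds for $[0]_S$ by the usual arithmetical argument and then invokes the purely modal lemma (from~\cite{FernandezJoosten:2018:OmegaRuleInterpretationGLP}) that in $\bgl_\blacksquare$ the axioms $\Box\varphi\to\blacksquare\varphi$, $\blacksquare\varphi\to\blacksquare\blacksquare\varphi$, and distribution for $\blacksquare$ suffice to derive $\blacksquare(\blacksquare\varphi\to\varphi)\to\blacksquare\varphi$; instantiating $\Box=[0]_S$ and $\blacksquare=[\alpha]_S$ then gives L\"ob for every $\alpha$ without a separate fixed point. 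Your direct approach is equally valid, just slightly less modular.
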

The proof of soundness from here on is routine, starting with the corresponding provable completeness.
\begin{lemma}[Provable $\Sigma_\alpha$-completeness] \label{lem: provable Sigma b completeness}
    \[ \ea + \utb_{\floor*{\alpha}} \vdash \varphi \to [\alpha]_S \varphi \text{, if } \varphi \in \Sigma_{\alpha}.
    \]
\end{lemma}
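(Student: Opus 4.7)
The plan is to use $\varphi$ itself (up to the $\bullet$-translation from Lemma \ref{lem: UTB<a in a finite language}) as the $\Sigma_{n+1}^{\mathcal{L}({\sf T}_\beta)}$-witness $\theta$ in the definition of $[\alpha]_S\varphi$, and then verify the three obligations (membership, truth, and internal provability) one at a time. I split on whether $\alpha$ is infinite or finite.

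The main case is $\alpha = \omega(1+\beta)+n$, so $\utb_{\floor*{\alpha}} = \utb_\beta$ and $\varphi \in \Sigma_{n+1}^{\mathcal{L}_\beta}({\sf T}_\beta)$. First I would apply Lemma \ref{lem: UTB<a in a finite language} to pass to the finite-language version $\varphi^\bullet \in \Sigma_{n+1}^{\mathcal{L}({\sf T}_\beta)}$, which is $\ea+\utb_\beta$-equivalent to $\varphi$, so that $\varphi^\bullet$ is a legal value of $\theta$ in the definition of $[\alpha]_S$. Now I reason inside $\ea+\utb_\beta$: assume $\varphi$; then also $\varphi^\bullet$. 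To discharge $[\alpha]_S\varphi$, I assume the antecedent $i\delta^{\mathcal{L}({\sf T}_\beta)}$, which gives us $I\Delta_0^{\mathcal{L}({\sf T}_\beta)}$ and hence by the truth-definition theorem the biconditional ${\sf Tr}_{\Sigma_{n+1}^{\mathcal{L}({\sf T}_\beta)}}(\varphi^\bullet) \leftrightarrow \varphi^\bullet$; combining with the previous step yields ${\sf Tr}_{\Sigma_{n+1}^{\mathcal{L}({\sf T}_\beta)}}(\varphi^\bullet)$. The remaining conjunct is $\Box_S(\varphi^\bullet \to \varphi)$: because $S \supseteq \ea + \utb_{<\Lambda} \supseteq \ea + \utb_\beta$ (the last inclusion since $\beta<\Lambda$), Lemma \ref{lem: UTB<a in a finite language} supplies a derivation of $\varphi^\bullet \to \varphi$ in $S$, and formalized provable $\Sigma_1$-completeness gives $\Box_S\ulcorner \varphi^\bullet \to \varphi\urcorner$, verifiable in $\ea$. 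Taking $\theta := \varphi^\bullet$ completes the witnessing.

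The degenerate case $\alpha = n < \omega$ is simpler: $\utb_{\floor*{\alpha}} = \emptyset$, no truth predicate appears, and $\varphi \in \Sigma_n^{\mathcal{L}} \subseteq \Sigma_{n+1}^{\mathcal{L}}$. Take $\theta := \varphi$ directly; the $\Sigma_{n+1}^{\mathcal{L}}$ truth definition (modulo the $i\delta^{\mathcal{L}}$ antecedent) yields ${\sf Tr}_{\Sigma_{n+1}^{\mathcal{L}}}(\varphi) \leftrightarrow \varphi$, and $\Box_S(\varphi \to \varphi)$ is trivial, so $[n]_S\varphi$ follows in $\ea$ alone.

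The only delicate point I expect is the bookkeeping in step verifying $\Box_S(\varphi^\bullet \to \varphi)$: one must check that the equivalence supplied externally by Lemma \ref{lem: UTB<a in a finite language} is in fact formalizable inside $S$, and that the elementary code of $\varphi^\bullet$ can be constructed from the code of $\varphi$ by a $\Delta_0({\rm exp})$ operation so that $\Box_S$ is witnessed by a genuine $S$-proof. This is routine given that the $\bullet$-translation is defined by an elementary recursion on formulas and that $\utb_\beta$ is a c.e.\ subtheory of $S$, but it is the place where the soundness of $\glp_\Lambda$ actually uses the hypothesis $S\supseteq \ea+\utb_{<\Lambda}$ rather than just $S\supseteq\ea$.
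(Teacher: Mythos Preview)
Your direct proof is correct. The paper, however, argues the contrapositive: reasoning in $\ea + \utb_{\floor*{\alpha}}$, assume $\neg[\alpha]_S\varphi$, i.e.\ the finite axiomatization of $R_\alpha(S+\neg\varphi)$. By Proposition~\ref{prop: UTB interraction with reflection}(ii) this single formula axiomatizes the full schema $\Pi_{1+\alpha}\text{-}\rfn(S+\neg\varphi)$ over $\ea + \utb_{\floor*{\alpha}}$, so in particular the instance $\Box_{S+\neg\varphi}\neg\varphi \to \neg\varphi$ holds (as $\neg\varphi \in \Pi_\alpha \subseteq \Pi_{1+\alpha}$); since trivially $\Box_{S+\neg\varphi}\neg\varphi$, one concludes $\neg\varphi$.

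The paper's argument is three lines because it treats $\langle\alpha\rangle_S$ as a black box axiomatizing the reflection schema and never unfolds the explicit $\Sigma_{n+1}^{\mathcal{L}({\sf T}_\beta)}$-form; the $\bullet$-translation and truth-definition bookkeeping you carry out by hand are already absorbed into Proposition~\ref{prop: UTB interraction with reflection}. Your approach, by contrast, exhibits the concrete witness $\theta=\varphi^\bullet$ realizing the existential, and your closing remark about where the hypothesis on $S$ enters (to secure $\Box_S(\varphi^\bullet\to\varphi)$) is exactly right---the paper needs the same hypothesis, but only implicitly, through the well-definedness clause $S\supseteq \ea+\utb_{\floor*{\alpha}}$ in Proposition~\ref{prop: UTB interraction with reflection}(ii). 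Either route works; the contrapositive is shorter once the schema-level equivalence is available, while yours makes the underlying syntactic mechanism transparent.
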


\begin{proof}
    From Statement (ii) of Proposition \ref{prop: UTB interraction with reflection}, $ [\alpha]_S \varphi $ is finitely axiomatizable in $\ea + \utb_{\floor*{\alpha}}$. We will prove the contrapositive by reasoning within $\ea + \utb_{\floor*{\alpha}}$. Assume the finite axiomatization of $ R_\alpha(S+\neg\varphi) $, which implies $\Box_{S+\neg \varphi} \neg \varphi \to \neg \varphi$ because $\neg \varphi \in \Pi_\alpha$. Since $\Box_{S+\neg \varphi} \neg \varphi $ holds, $\neg \varphi$ follows.
\end{proof}
Now we have all the tools to prove L\"ob's derivability conditions:
\begin{lemma} 
    Let $\alpha < \beta$ and $\ea + \utb_{\floor*{\alpha}} +\utb_{\floor*{\beta}} \subseteq S$, then
    \begin{enumerate}[label=(\roman*)]
        \item If $S\vdash \varphi $ then $\ea + \utb_{\leq \Lambda} \vdash [\alpha]_S \varphi$;
        \item $\ea + \utb_{\leq \Lambda} \vdash [\alpha]_S (\varphi \to \psi )\to ( [\alpha]_S \varphi  \to [\alpha]_S \psi )$;
        \item $\ea + \utb_{\leq \Lambda} \vdash [\alpha]_S \varphi \to [\alpha]_S [\alpha]_S \varphi $;
        \item $\ea + \utb_{\leq \Lambda} \vdash [\alpha]_S \varphi \to [\beta]_S \varphi $;
        \item $\ea + \utb_{\leq \Lambda} \vdash \la \alpha \ra_S \varphi \to [\beta]_S\big( \la \alpha \ra _S \varphi \big) $.
    \end{enumerate}
\end{lemma}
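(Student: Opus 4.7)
My plan is to dispatch the five conditions in sequence, each following quickly from some combination of Proposition \ref{prop: UTB interraction with reflection}(ii), Lemma \ref{lem: provable Sigma b completeness}, Lemma \ref{lem: D_0 induction on finite language gives distributivity}, and the $\utb$-absorption provided by Lemma \ref{lem: UTB<a in a finite language}. The governing observation is that by construction $[\alpha]_S\varphi$ is a $\Sigma_\alpha$-sentence and $\la\alpha\ra_S\varphi$ is a $\Pi_\alpha$-sentence, so each condition reduces to a standard fact about these hyperarithmetical classes.

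Conditions (i)--(iii) are almost immediate. For (i), $S\vdash\varphi$ yields $\ea\vdash\Box_S\varphi$ by provable $\Sigma_1$-completeness of $\Box_S$, and the trivially true sentence $0=0$ then serves as the $\Sigma_{n+1}^{\mathcal{L}({\sf T}_\gamma)}$-witness in the definition of $[\alpha]_S\varphi$ (with $\gamma := \floor*{\alpha}$); the $i\delta^{\mathcal{L}({\sf T}_\gamma)}$-clause is supplied by $\utb_\gamma \subseteq \utb_{\leq \Lambda}$, which reduces induction on $\Delta_0^{\mathcal{L}({\sf T}_\gamma)}$-formulas to $\Delta_0$-induction in the base language. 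For (ii), the formula $[\alpha]_S\chi$ has precisely the shape to which Lemma \ref{lem: D_0 induction on finite language gives distributivity} applies, in the finite language $\mathcal{L}({\sf T}_\gamma)$. For (iii), since $[\alpha]_S\varphi$ is $\Sigma_\alpha$, Lemma \ref{lem: provable Sigma b completeness} gives $[\alpha]_S\varphi \to [\alpha]_S[\alpha]_S\varphi$ directly.

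For (iv) and (v), I write $\alpha = \omega(1+\gamma_\alpha)+n_\alpha$ and $\beta = \omega(1+\gamma_\beta)+n_\beta$; since $\alpha<\beta$ one has $\gamma_\alpha \leq \gamma_\beta$. For monotonicity (iv), I split on equality of the floor levels. If $\gamma_\alpha = \gamma_\beta$, then $n_\alpha < n_\beta$ and a $\Sigma_{n_\alpha+1}^{\mathcal{L}({\sf T}_{\gamma_\alpha})}$-witness $\theta$ for $[\alpha]_S\varphi$ is automatically a $\Sigma_{n_\beta+1}^{\mathcal{L}({\sf T}_{\gamma_\beta})}$-witness. If $\gamma_\alpha < \gamma_\beta$, the assumption $\utb_{\floor*{\beta}} \subseteq S$ lets me replace $\theta$ by the provably equivalent $\Delta_0^{\mathcal{L}({\sf T}_{\gamma_\beta})}$-sentence ${\sf T}_{\gamma_\beta}(\ulcorner\theta\urcorner)$, which is trivially a $\Sigma_\beta$-witness. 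For (v), $\la\alpha\ra_S\varphi$ is itself a sentence in $\mathcal{L}_{\gamma_\beta}$; applying the same $\utb$-absorption produces a provably equivalent $\Sigma_\beta$-formula, so Lemma \ref{lem: provable Sigma b completeness} combined with distributivity (item (ii)) yields $\la\alpha\ra_S\varphi \to [\beta]_S\la\alpha\ra_S\varphi$.

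The main technical wrinkle is the case $\gamma_\alpha < \gamma_\beta$ in (iv) and (v), where the direct inclusion $\Sigma_{n_\alpha+1}^{\mathcal{L}({\sf T}_{\gamma_\alpha})} \subseteq \Sigma_{n_\beta+1}^{\mathcal{L}({\sf T}_{\gamma_\beta})}$ can fail outright (for instance when $n_\beta = 0$ but $n_\alpha > 0$). The remedy is the Uniform Tarski Biconditionals at the higher level $\gamma_\beta$: they absorb \emph{all} the arithmetical and lower-truth-predicate complexity of a sentence in $\mathcal{L}({\sf T}_{\gamma_\alpha})$ into a single atomic occurrence of ${\sf T}_{\gamma_\beta}$, collapsing its hyperarithmetical complexity down to $\Delta_0^{\mathcal{L}({\sf T}_{\gamma_\beta})}$. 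This is the precise role of the $\bullet$-translation from Lemma \ref{lem: UTB<a in a finite language}, and correspondingly of the hypothesis $\utb_{\floor*{\alpha}} + \utb_{\floor*{\beta}} \subseteq S$ in the statement of the lemma.
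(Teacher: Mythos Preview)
Your proof is correct and follows essentially the same route as the paper: items (i)--(iii) are handled identically (provable $\Sigma_1$-completeness, Lemma~\ref{lem: D_0 induction on finite language gives distributivity}, and Lemma~\ref{lem: provable Sigma b completeness} respectively), and for (iv)--(v) both you and the paper use the $\utb_{\floor*{\beta}}$-absorption to collapse a $\Sigma_\alpha$/$\Pi_\alpha$-sentence into an atomic ${\sf T}_{\gamma_\beta}$-sentence, hence a $\Sigma_\beta$-sentence. Your treatment is in fact slightly more careful than the paper's in that you make the case split $\gamma_\alpha = \gamma_\beta$ versus $\gamma_\alpha < \gamma_\beta$ explicit in (iv), whereas the paper only spells out the latter.
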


\begin{proof}
    By statement (ii) of Proposition \ref{prop: UTB interraction with reflection} the $[\alpha]_S \varphi$ and $[\beta]_S\varphi$ formulas are well defined as the finite axiomatizations of $\neg R_\alpha (S+ \neg \varphi)$ and $\neg R_\beta (S+ \neg \varphi)$ respectively.
    \begin{enumerate}[label=(\roman*)]
        \item The assumption implies $\ea \vdash \Box_S \varphi$ and so statement $(i)$ follows.
        \item Immediate from the statement (i) of Proposition \ref{prop: UTB interraction with reflection} and Lemma \ref{lem: D_0 induction on finite language gives distributivity}. 
        \item Follows from Lemma \ref{lem: provable Sigma b completeness} as $[\alpha]_S \varphi$ is a $\Sigma_\alpha$ formula. 
        \item Assume that $\alpha= \omega \gamma + n$ and $\beta = \omega \delta + m$ with $\gamma < \delta$ and reasoning in $\ea + \utb_{\floor*{\alpha}} + \utb_{\floor*{\beta}}$ we remark that if a formula $\varphi$ is $\Sigma_\alpha$ then it is equivalent to ${\sf T}_\delta(\varphi)$ which is a $\Sigma_{\omega\delta}$-formula.
        \item Since $\la \alpha \ra_S \varphi$ is a $\Pi_\alpha$-formula then, reasoning as above, it is also a $\Sigma_\beta$-formula over\\ $\ea + \utb_\gamma + \utb_\delta$.
    \end{enumerate}
\end{proof}

\begin{lemma}[\cite{FernandezJoosten:2018:OmegaRuleInterpretationGLP}]
    Let $\bgl_\blacksquare$ be the extension of $\bgl$ by a new modal operator $\blacksquare$ and the axioms $\Box \varphi \to \blacksquare \varphi $, $\blacksquare \varphi \to \blacksquare\blacksquare \varphi $, and $ \blacksquare(\varphi \to \psi) \to (\blacksquare\varphi \to \blacksquare\psi) $.
 Then for all $\varphi$, $ \blacksquare(\blacksquare\varphi \to \varphi) \to \blacksquare\varphi$.
    \end{lemma}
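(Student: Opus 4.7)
The plan is to reduce the desired Löb-for-$\blacksquare$ principle to Löb's rule for $\Box$, which is already available in $\bgl$: whenever $\vdash \Box\chi \to \chi$, necessitation followed by the Löb axiom $\Box(\Box\chi\to\chi)\to\Box\chi$ yields $\vdash \Box\chi$, and hence $\vdash \chi$ by modus ponens. Setting
\[
 \chi := \blacksquare(\blacksquare\varphi \to \varphi) \to \blacksquare\varphi,
\]
the lemma reduces to showing $\vdash \Box\chi \to \chi$.

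To establish this, I assume $\Box\chi$ together with the antecedent $\blacksquare(\blacksquare\varphi \to \varphi)$ and aim at $\blacksquare\varphi$. First, the inclusion axiom $\Box\alpha \to \blacksquare\alpha$ promotes $\Box\chi$ to $\blacksquare\chi$. Unpacking $\chi$ and applying the distribution axiom for $\blacksquare$ then gives $\blacksquare\blacksquare(\blacksquare\varphi \to \varphi) \to \blacksquare\blacksquare\varphi$. The 4-axiom $\blacksquare\alpha \to \blacksquare\blacksquare\alpha$, applied to the antecedent, yields $\blacksquare\blacksquare(\blacksquare\varphi \to \varphi)$, so $\blacksquare\blacksquare\varphi$ follows. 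A second use of the distribution axiom on the antecedent extracts $\blacksquare\blacksquare\varphi \to \blacksquare\varphi$, and one more modus ponens completes the derivation of $\blacksquare\varphi$.

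The only subtle point is spotting the correct $\chi$; once this substitution is in place, the remainder is a purely mechanical chain of modus ponens steps using the three given axioms for $\blacksquare$ together with elementary propositional reasoning. All of the well-foundedness that makes Löb work is extracted from the single appeal to Löb for $\Box$ via the derived rule, so no additional model-theoretic or fixed-point construction beyond the $\bgl$ machinery is required.
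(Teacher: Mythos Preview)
Your argument is correct: setting $\chi := \blacksquare(\blacksquare\varphi \to \varphi) \to \blacksquare\varphi$ and deriving $\Box\chi \to \chi$ via the inclusion, $\mathbf{K}_\blacksquare$, and $\mathbf{4}_\blacksquare$ axioms exactly as you outline, then invoking L\"ob's rule for $\Box$, is the standard and intended route. The paper does not supply its own proof of this lemma but merely cites \cite{FernandezJoosten:2018:OmegaRuleInterpretationGLP}, where the argument given is essentially the one you have written.
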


Since L\"ob's theorem holds for $[0]$ as usual from the fixed point theorem, we conclude that it holds for all modalities, concluding our proof of Theorem \ref{Arithmetical soundness for GLP L}.

\begin{lemma}
    Let $\ea + \utb_{\floor*{\alpha}} \subseteq S$. Then, $ \ea + \utb_{\leq \Lambda } \vdash [\alpha]_S([\alpha]_S \varphi \to \varphi) \to [\alpha]_S\varphi$.
\end{lemma}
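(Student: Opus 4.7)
The plan is to derive Löb's principle for $[\alpha]_S$ from the ordinary Löb principle for $[0]_S$, using the modal lemma on $\bgl_\blacksquare$ quoted immediately before the statement. The point of that lemma is precisely that a ``stronger'' modality $\blacksquare$ inherits Löb's principle from an underlying $\Box$ provided it satisfies three schemata: dominance $\Box\varphi \to \blacksquare\varphi$, transitivity $\blacksquare\varphi \to \blacksquare\blacksquare\varphi$, and normality $\blacksquare(\varphi\to\psi) \to (\blacksquare\varphi \to \blacksquare\psi)$. So I want to instantiate $\Box := [0]_S$ and $\blacksquare := [\alpha]_S$ and verify those three premises inside $\ea + \utb_{\leq \Lambda}$.

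First I would note that ordinary Löb's theorem for $[0]_S$ is available: $[0]_S$ is (equivalent to) the standard provability predicate $\Box_S$, so the fixed-point construction gives $\ea \vdash [0]_S([0]_S\varphi \to \varphi)\to [0]_S\varphi$ as usual. Next, the three premises of the $\bgl_\blacksquare$ lemma are exactly items in the previous lemma: dominance $[0]_S\varphi \to [\alpha]_S\varphi$ is the monotonicity clause (iv) with $0 < \alpha$; transitivity $[\alpha]_S\varphi \to [\alpha]_S[\alpha]_S\varphi$ is clause (iii); and distributivity for $[\alpha]_S$ is clause (ii). Each of these was already proved over $\ea+\utb_{\leq\Lambda}$, using in particular Proposition~\ref{prop: UTB interraction with reflection}(ii) to make sense of the finite axiomatization of $[\alpha]_S\varphi$ and Lemma~\ref{lem: D_0 induction on finite language gives distributivity} for distributivity.

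With those three premises in hand, I would apply the $\bgl_\blacksquare$ lemma internally: since all of its hypotheses are verified for the pair $([0]_S,[\alpha]_S)$ in $\ea + \utb_{\leq \Lambda}$, we conclude $[\alpha]_S([\alpha]_S\varphi \to \varphi) \to [\alpha]_S\varphi$ in the same theory. The argument is essentially just a substitution into the proven modal schema.

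I do not expect serious obstacles. The only subtlety is purely bookkeeping: one must check that the hypothesis $\ea+\utb_{\floor*{\alpha}} \subseteq S$ is enough to make $[0]_S$ and $[\alpha]_S$ behave correctly together, which follows because $\ea+\utb_{\leq\Lambda}$ contains $\utb_{\floor*{0}}$ trivially and $\utb_{\floor*{\alpha}}$ by monotonicity in the ambient theory, so the preceding lemma applies with the pair $(0,\alpha)$. Once that is observed, the proof is a one-line invocation of the $\bgl_\blacksquare$ lemma together with standard Löb for $[0]_S$.
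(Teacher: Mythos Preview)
Your proposal is correct and is exactly the argument the paper intends: the paper states just before this lemma that L\"ob's theorem for $[0]$ follows from the fixed point theorem and then invokes the $\bgl_\blacksquare$ lemma to transfer it to every $[\alpha]_S$, using the derivability conditions (ii)--(iv) of the preceding lemma as the required premises. Your bookkeeping observation that $\utb_{\floor*{0}}=\emptyset$ so the hypothesis $\ea+\utb_{\floor*{\alpha}}\subseteq S$ suffices for the pair $(0,\alpha)$ is precisely the check needed.
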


\section{Worm Battles beyond $\pa$}

Let $ \equiv_\alpha $ and $ \equiv_{<\lambda} $ denote equivalence for $ \Pi_{1+\alpha} $ and $ \Pi_{<\lambda} $-sentences respectively. In \cite{BeklemishevPakhomov:2019:GLPforTheoriesOfTruth} two conservation results are proven to hold provably in $\ea^+$: Theorems \ref{thm: generalized lim reduction property} and \ref{thm: generalized succ reduction property}. We fix a particular $\Lambda$.

\subsection{The Reduction Property}
The first conservation result centers around the case for reflection on limit ordinals.
\begin{theorem}\label{thm: generalized lim reduction property}
	Let $ \lambda {=} \omega (1+\alpha) $ and $ S {\supseteq} \ea + \utb_\alpha $. Over $ \ea + \utb_{<\Lambda} $,  $ R_{\lambda}(S) \equiv_{<\lambda} R_{<\lambda}(S) $.
\end{theorem}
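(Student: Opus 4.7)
The plan is to establish the $\Pi_{<\lambda}$-equivalence by proving two directions: the forward inclusion $R_\lambda(S) \vdash R_{<\lambda}(S)$, which handles one half trivially, and the substantive reverse direction that every $\Pi_{<\lambda}$-consequence of $R_\lambda(S)$ is already derivable from $R_{<\lambda}(S)$.

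For the forward direction, I would fix $\beta < \lambda$ and write $\beta = \omega(1+\gamma) + n$ with $\gamma < \alpha$ (the case $\beta < \omega$ being analogous). A $\Pi_\beta$-sentence $\varphi$ belongs by definition to $\mathcal{L}_{\gamma+1} \subseteq \mathcal{L}_\alpha$. By the Tarski biconditional axiom in $\utb_\alpha$, we have $\varphi \leftrightarrow {\sf T}_\alpha(\ulcorner\varphi\urcorner)$ provably, and the right-hand side is an atomic formula in $\mathcal{L}({\sf T}_\alpha)$, hence a $\Pi_1^{\mathcal{L}}({\sf T}_\alpha)$-formula, i.e.~a $\Pi_\lambda$-formula. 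Since $S \supseteq \ea + \utb_\alpha$, this equivalence is also provable inside $S$, so $\Box_S\varphi \leftrightarrow \Box_S{\sf T}_\alpha(\ulcorner\varphi\urcorner)$. Applying $R_\lambda(S)$ to ${\sf T}_\alpha(\ulcorner\varphi\urcorner)$ thus yields $\Box_S\varphi \to \varphi$, i.e.~the $R_\beta(S)$-instance for $\varphi$. This argument is uniform in $\varphi$, so $R_\lambda(S) \vdash R_\beta(S)$ for every $\beta<\lambda$.

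For the reverse direction, suppose $\ea + \utb_{<\Lambda} + R_\lambda(S) \vdash \psi$ with $\psi \in \Pi_\beta$ for some $\beta < \lambda$. By Proposition \ref{prop: UTB interraction with reflection}(ii), $R_\lambda(S)$ is axiomatized over $\ea + \utb_\alpha$ by a single $\Pi_\lambda$-sentence $r_\lambda$. The strategy is a partial reflection and complexity-bounding argument. For each $\gamma < \alpha$ and $N < \omega$, I would introduce a partial reflection principle $r_\lambda^{(\gamma, N)}$ asserting $\Pi_\lambda$-reflection restricted to instances $\varphi$ whose ${\sf T}_\alpha$-occurrences reduce via $\utb_\alpha$ to $\mathcal{L}_{\gamma+1}$-formulas of $\Pi$-complexity at most $N$. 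Each such $r_\lambda^{(\gamma, N)}$ should be derivable from the single sentence $R_{\omega(1+\gamma)+N}(S) \in R_{<\lambda}(S)$ by the same truth-predicate trick used in the forward direction, applied uniformly in $\varphi$ rather than to a single instance. The concrete formal proof of $\psi$ from $r_\lambda$ is a finite syntactic object and so mentions only finitely many truth predicates ${\sf T}_\gamma$ with $\gamma < \alpha$ and has bounded logical complexity; this witnesses concrete values $\gamma_0 < \alpha$ and $N_0 < \omega$ such that $r_\lambda^{(\gamma_0,N_0)}$ already derives $\psi$. Hence $\psi$ follows from $R_{\omega(1+\gamma_0)+N_0}(S) \subseteq R_{<\lambda}(S)$.

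The main obstacle lies in (a) defining the partial reflection principles $r_\lambda^{(\gamma, N)}$ so that they genuinely weaken $r_\lambda$ yet are collectively exhaustive for $\Pi_{<\lambda}$-consequences, and (b) making the bound-extraction step formal inside $\ea + \utb_{<\Lambda}$. The successor reduction, Theorem \ref{thm: generalized succ reduction property}, presumably supplies the template for the $N$-stratification. The limit structure is what distinguishes this case: because $\mathcal{L}_\alpha = \bigcup_{\gamma<\alpha}\mathcal{L}_{\gamma+1}$, no single $\gamma < \alpha$ majorizes all of $\mathcal{L}_\alpha$ a priori, and one must invoke the finiteness of proofs — together with provable $\Sigma_1$-completeness applied to the syntactic analysis of the proof — to pin down such a $\gamma_0 < \alpha$. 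Getting this bounding argument to go through while respecting the stratified $\utb_\alpha$-biconditional structure is the technical crux.
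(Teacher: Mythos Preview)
The paper does not give its own proof of this theorem: it is quoted from Beklemishev--Pakhomov (the sentence immediately preceding the two theorems says so explicitly), so there is no in-paper argument to compare yours against.

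As for your sketch, the forward direction is fine and standard. The reverse direction, however, has a genuine gap. The point is that $R_\lambda(S)$ is not a schema but a \emph{single} $\Pi_\lambda$-sentence (Proposition~\ref{prop: UTB interraction with reflection}(ii)); its finite axiomatization contains a first-order universal quantifier over codes of $\Pi_1^{\mathcal L}({\sf T}_\alpha)$-formulas, together with the truth definition ${\sf Tr}_{\Pi_1^{\mathcal L}({\sf T}_\alpha)}$ and the axiom $i\delta^{\mathcal L({\sf T}_\alpha)}$. A derivation of $\psi$ from this one sentence does not break up into finitely many ``instances'' of reflection whose complexity one could then bound: the internal quantifier may be instantiated by a variable, so the formulas it ranges over are of unbounded $\mathcal L_\alpha$-complexity. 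In particular your proposed restriction $r_\lambda^{(\gamma,N)}$ --- reflection for those $\varphi$ whose ${\sf T}_\alpha$-occurrences reduce via $\utb_\alpha$ to $\mathcal L_{\gamma+1}$-formulas of $\Pi$-rank $\leq N$ --- is ill-defined for atoms ${\sf T}_\alpha(x)$ with $x$ a variable, and these are precisely the atoms that give uniform $\Pi_1({\sf T}_\alpha)$-reflection its strength over the schema $R_{<\lambda}(S)$. The finiteness-of-proofs heuristic, which works when the hypothesis is an infinite schema, gives no purchase on a single uniformly quantified axiom.

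The argument in the cited source is not of this syntactic-compactness kind; it rests on conservativity of adjoining the fresh truth predicate ${\sf T}_\alpha$ (with its Tarski biconditionals) over the $\mathcal L_\alpha$-theory $R_{<\lambda}(S)$, formalizable in $\ea^+$.
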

The second concervation result centers around successors. It can be viewed as an extension of the so-called reduction property (cf. \cite{Beklemishev:2005:Survey}) to cover all successor ordinals and not just the finite ones.
\begin{theorem}\label{thm: generalized succ reduction property}
	Let $ V $ be a $ \Pi_{1+\alpha + 1} $-axiomatized extension of $ \ea + \utb_{<\Lambda} $ and let provably $ S \supseteq V $. Then, over $ V $, $ R_{\alpha + 1} (S) \equiv_\alpha \{ R_\alpha (S), R_\alpha (S + R_\alpha (S)), \ldots \} $.
\end{theorem}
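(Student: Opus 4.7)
Write $\alpha = \omega(1{+}\beta) + n$. Over $\ea{+}\utb_\beta \subseteq V$, Proposition~\ref{prop: UTB interraction with reflection}(ii) shows that both $R_\alpha(S)$ and $R_{\alpha+1}(S)$ are finitely axiomatizable as single $\Pi_{1+\alpha}$- and $\Pi_{1+\alpha+1}$-sentences respectively, and the class $\Pi_{1+\alpha} = \Pi_{n+1}^{\mathcal{L}}({\sf T}_\beta)$ admits a truth predicate by Theorem~\ref{thm:finite axiomatizability of rfn}. The strategy is to adapt Beklemishev's classical proof of the reduction property for the finite case $\alpha = n$ (see \cite{Beklemishev:2005:Survey}) to the present setting, treating ${\sf T}_\beta$ atomically throughout.

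For the easy direction, that $V + R_{\alpha+1}(S) \vdash R_\alpha^k(S)$ for each standard $k$, I would use external induction on $k$. The base case $R_{\alpha+1}(S) \vdash R_\alpha(S)$ is immediate from the schema inclusion $\Pi_{1+\alpha}\subseteq\Pi_{1+\alpha+1}$ together with agreement of truth predicates. For the inductive step, $R_\alpha(S{+}R_\alpha^k(S))$ reads $\forall\sigma{\in}\Pi_{1+\alpha}(\Box_S(R_\alpha^k(S)\to\sigma)\to\sigma)$; since $R_\alpha^k(S)\to\sigma$ is (provably) $\Sigma_{1+\alpha+1}$ and $R_{\alpha+1}(S)$ reflects this class, combining with the induction hypothesis yields the conclusion. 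Equivalently, this whole direction is the arithmetical realisation, via Theorem~\ref{Arithmetical soundness for GLP L}, of the $\glp_\Lambda$-derivable implication $\la\alpha{+}1\ra\top \to \la\alpha\ra^{k}\top$, itself proved by iterating monotonicity with the interaction law $\la\alpha{+}1\ra\psi \wedge \la\alpha\ra\varphi \leftrightarrow \la\alpha{+}1\ra(\psi \wedge \la\alpha\ra\varphi)$.

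The principal task is conservation: given $\pi\in\Pi_{1+\alpha}$ with $V + R_{\alpha+1}(S) \vdash \pi$, produce a standard $k$ such that $V + R_\alpha^k(S)\vdash \pi$. My plan is to write $R_{\alpha+1}(S)$ as $\forall x\, \rho(x)$ with $\rho(x)\in\Pi_{1+\alpha}$ the partial reflection formula ``every $\Pi_{1+\alpha+1}$-sentence $S$-provable in $\leq x$ steps is true'', available from the truth-definition infrastructure. By compactness, $V \vdash \rho(\bar x_0)\wedge\ldots\wedge \rho(\bar x_m) \to \pi$ for some specific standard $x_i$. Using that $V$ is $\Pi_{1+\alpha+1}$-axiomatized and that $R_\alpha(T)$, for any finitely $\Pi_{1+\alpha}$-axiomatized $T$, already delivers every partial reflection $\rho_T(k)$ at a fixed standard bound, one iterates to find a single depth $k$ for which $V + R_\alpha^k(S)\vdash \rho(\bar x_i)$ for all $i$, and hence $V + R_\alpha^k(S)\vdash \pi$.

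The main obstacle is precisely this last step, the extraction of a finite witness $k$, which is the transfinite analogue of the cut-elimination / partial-reflection argument in the classical finite case. The key conceptual input is that $R_{\alpha+1}(S)$ lies only one complexity level above $\Pi_{1+\alpha}$, so the $\Pi_{1+\alpha+1}$-axiomatizability of $V$ permits one to reshape the $V$-proof of $\pi$ by a single unfolding of partial reflection. All other ingredients (finite axiomatizability of $R_\gamma$, truth predicates for $\Pi^{\mathcal L}_{n+1}({\sf T}_\beta)$, conservativity of $\utb$-extensions) are already in place, so the argument reduces uniformly in $\alpha$ to the classical case; the only genuine work is to verify that every manipulation of truth and provability predicates remains sound in $\mathcal L_{\beta+1}$ with ${\sf T}_\beta$ atomic.
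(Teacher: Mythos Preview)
The paper does not contain a proof of this theorem. It is stated with an explicit attribution to \cite{BeklemishevPakhomov:2019:GLPforTheoriesOfTruth} (``In \cite{BeklemishevPakhomov:2019:GLPforTheoriesOfTruth} two conservation results are proven to hold provably in $\ea^+$: Theorems \ref{thm: generalized lim reduction property} and \ref{thm: generalized succ reduction property}'') and is then used as a black box to derive the Reduction Property corollary. So there is no ``paper's own proof'' to compare your proposal against.

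That said, a remark on your sketch. Your overall strategy---reduce to the finite-level argument by working in the finite language $\mathcal L({\sf T}_\beta)$ via Proposition~\ref{prop: UTB interraction with reflection} and then replay Beklemishev's classical conservation proof---is exactly the route taken in \cite{BeklemishevPakhomov:2019:GLPforTheoriesOfTruth}. However, your formulation of the hard direction has a complexity slip: you write $R_{\alpha+1}(S)$ as $\forall x\,\rho(x)$ with $\rho(x)\in\Pi_{1+\alpha}$ asserting ``every $\Pi_{1+\alpha+1}$-sentence $S$-provable in $\leq x$ steps is true''. Truth for $\Pi_{1+\alpha+1}$-sentences is itself $\Pi_{1+\alpha+1}$, not $\Pi_{1+\alpha}$, so $\rho(x)$ as you describe it is not of the right complexity and the compactness step does not go through as written. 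The actual argument (in both the classical and the transfinite version) does not slice $R_{\alpha+1}(S)$ by proof length in this way; rather it exploits that a $\Pi_{1+\alpha+1}$-sentence has the form $\forall y\,\sigma(y)$ with $\sigma\in\Sigma_{1+\alpha}$, so that a single application of $R_\alpha$ already yields each numerical instance $\sigma(\bar m)$, and the iteration over $k$ arises from unwinding the nesting of such instances inside the given $V$-proof. Your final paragraph gestures at this (``one complexity level above'', ``single unfolding''), but the concrete decomposition you propose one paragraph earlier is not the one that works.
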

As in the case of $\glp_\omega$ we can recast these conservation results in terms of our interpreted modalities (using the same notation for the modality and its arithmetical denotation).
\begin{corollary}[Reduction Property]
    If $ \beta \leq \alpha, \lambda < \Lambda$ with $\lambda$ being a limit ordinal, then
	\begin{align*}
	     \ea^+ + \utb_{<\Lambda} \vdash \la \beta \ra \la \alpha{+}1 \ra \varphi \leftrightarrow \forall \, k \ \la \beta \ra Q^\alpha _k (\varphi);\\
	     \ea + \utb_{<\Lambda} \vdash \la \beta \ra \la \lambda \ra \varphi \leftrightarrow \forall \, k \ \la \beta \ra \la \lambda[k] \ra \varphi.
	\end{align*}
\end{corollary}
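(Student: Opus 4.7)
The plan is to derive each equivalence from the corresponding conservation theorem just above (Theorem~\ref{thm: generalized lim reduction property} for the limit case and Theorem~\ref{thm: generalized succ reduction property} for the successor case), combined with the arithmetical soundness of $\glp_\Lambda$ established in Theorem~\ref{Arithmetical soundness for GLP L} and a formalized compactness argument.

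For both equivalences the forward direction ($\rightarrow$) is handled purely by $\glp_\Lambda$-reasoning. In the limit case, monotonicity yields $\la\lambda\ra\varphi \rightarrow \la\lambda[k]\ra\varphi$ because $\lambda[k] < \lambda$, and provable monotonicity of $\la\beta\ra$ in its argument then delivers $\la\beta\ra\la\lambda\ra\varphi \rightarrow \la\beta\ra\la\lambda[k]\ra\varphi$ for every $k$. In the successor case I first prove $\glp_\Lambda \vdash \la\alpha{+}1\ra\varphi \rightarrow Q^\alpha_k(\varphi)$ by external induction on $k$, using monotonicity for $k=0$ and, at the inductive step, the clause of Lemma~\ref{lem: basic properties of glp concerned with building up} that absorbs an $\alpha$-conjunct into a larger modality together with the L\"ob-driven principle $\la\alpha{+}1\ra\varphi \rightarrow \la\alpha{+}1\ra(\varphi \wedge \la\alpha{+}1\ra\varphi)$ to feed the induction hypothesis into the scope of a fresh $\la\alpha\ra$; applying $\la\beta\ra$ on both sides then gives the forward direction.

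For the reverse direction of the limit case, I apply Theorem~\ref{thm: generalized lim reduction property} with $S$ replaced by $S+\varphi$: this shows that over $S$, the single axiom $\la\lambda\ra\varphi$ and the schema $\{\la\gamma\ra\varphi : \gamma<\lambda\}$ prove the same $\Pi_{<\lambda}$ consequences, and since $\beta < \lambda$ this includes all $\Pi_{1+\beta}$ consequences. Hence $R_\beta(S+\la\lambda\ra\varphi)$ becomes provably equivalent to $R_\beta(S+\{\la\gamma\ra\varphi : \gamma<\lambda\})$. A formalized compactness argument then reduces the right-hand side: any $\Pi_{1+\beta}$-proof uses only finitely many $\la\gamma_i\ra\varphi$, and by monotonicity these can be merged into a single $\la\gamma\ra\varphi$ with $\gamma = \max \gamma_i$; cofinality of the fundamental sequence yields a $k$ with $\gamma \leq \lambda[k]$, and one final application of monotonicity brings us to $\la\beta\ra\la\lambda[k]\ra\varphi$. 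The successor case is entirely parallel, using Theorem~\ref{thm: generalized succ reduction property} in place of the limit version: its iterated reflections unwind, by the definition of $Q^\alpha_k$, to exactly the desired schema, and the hypothesis $\beta \leq \alpha$ lets one pass from $\Pi_{1+\alpha}$- to $\Pi_{1+\beta}$-equivalence before the compactness step.

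The main obstacle I anticipate is the careful in-theory execution of this compactness step, i.e.\ formalising the passage from the semantic $\equiv_{<\lambda}$ (resp.\ $\equiv_\alpha$) to provable equivalence of the partial reflection principles $R_\beta$ inside $\ea + \utb_{<\Lambda}$, respectively $\ea^+ + \utb_{<\Lambda}$ in the successor case where $\ea^+$ is needed because Theorem~\ref{thm: generalized succ reduction property} iterates reflection and therefore requires super-exponentiation. Concretely one must elementarily extract the maximal $\gamma_i$ from a given proof code and propagate the monotonicity implications under the formalised $\Box$; since Theorems~\ref{thm: generalized lim reduction property} and~\ref{thm: generalized succ reduction property} are themselves stated as provable equivalences in these base theories, this bookkeeping should go through without surprises.
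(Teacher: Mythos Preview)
Your overall strategy is the paper's strategy: instantiate Theorems~\ref{thm: generalized succ reduction property} and~\ref{thm: generalized lim reduction property} with $S$ taken to be the base theory plus $\varphi$, read off the $\Pi_{1+\beta}$-conservativity (using $\beta\leq\alpha$ in the successor case), and then pass from this to equivalence of the $\la\beta\ra$-formulas by a formalized compactness argument. The paper's proof is just a terse version of exactly this, so on the substantive points you are aligned.

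There is one genuine slip. In your inductive proof of $\glp_\Lambda\vdash\la\alpha{+}1\ra\varphi\rightarrow Q^\alpha_k(\varphi)$ you invoke the ``L\"ob-driven principle'' $\la\alpha{+}1\ra\varphi\rightarrow\la\alpha{+}1\ra(\varphi\wedge\la\alpha{+}1\ra\varphi)$. This implication is \emph{not} derivable in $\glp_\Lambda$: already in $\bgl$ the formula $\Diamond p\rightarrow\Diamond(p\wedge\Diamond p)$ fails (take a two-point model with a single irreflexive successor where $p$ holds), and in fact L\"ob's axiom gives the opposite, $\Diamond\varphi\rightarrow\Diamond(\varphi\wedge\neg\Diamond\varphi)$. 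The induction is easily repaired without this principle: from $\la\alpha{+}1\ra\varphi$ and the induction hypothesis $Q^\alpha_k(\varphi)$, which has the shape $\la\alpha\ra\psi$, Lemma~\ref{lem: basic properties of glp concerned with building up}(ii) gives $\la\alpha{+}1\ra(\varphi\wedge Q^\alpha_k(\varphi))$ directly, and then monotonicity yields $Q^\alpha_{k+1}(\varphi)$. Alternatively you can drop the separate $\glp$-argument for the forward direction altogether, since both directions are contained in the $\equiv_\beta$ coming from the conservation theorems; this is what the paper does.
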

\begin{toappendix}

\begin{proof}
	By Theorem \ref{thm: generalized succ reduction property} for $V = \ea + \utb_{\Lambda}$ and $S =V + \varphi$, we have
	\[ \{\la \alpha+1 \ra  \varphi\} \equiv_\beta \{ Q^\alpha _k (\varphi) : k<\omega \}
	\]
	holds over $\ea + \utb_\Lambda$ and is an equivalence formalizable in $\ea^+ + \utb_{<\Lambda}$. So over $ \ea^+ + \utb_{<\Lambda} $, a $ \Pi_{1+\beta} $ sentence is provable from $ \la \alpha+1 \ra \varphi  $ if and only if it is so from $ Q^\alpha _k (\varphi) $, for some $ k $ which proves the first Reduction Property.

	For the second, by Theorem \ref{thm: generalized lim reduction property} for $S = \ea + \utb_{\floor*{\lambda}} + \varphi$, over $\ea + \utb_{<\Lambda}$,
	$ \{ \la \lambda \ra \varphi \} \equiv_{<\lambda} \{ \la \gamma \ra \varphi: \gamma< \lambda\}
	$,
	and the equivalence is also provable over $\ea + \utb_{<\Lambda}$. So over $\ea + \utb_{<\Lambda}$, a $ \Pi_{1+\beta} $-sentence $\psi$ is provable from $ \la \lambda \ra \varphi  $ if and only if it is so from $\la \gamma \ra \varphi$ for some $\gamma < \lambda$. Let $k$ be such that $\gamma < \lambda[ k] $, then $\psi$ is also provable from $\la \lambda[ k] \ra \varphi$.
\end{proof}

\end{toappendix}

In the first order language $ \mathcal{L}({\sf T}) $, consider the following theory 
\[ \pa({\sf T}) := \ea {+} \utb_{\mathcal{L}({\sf T})} \ + \ R_{<\omega 2} ( \ea {+} \utb_{\mathcal{L}({\sf T})}), \] 
equivalent (provably so in $\ea^+$) to the corresponding $\pa({\sf T})$ in \cite{BeklemishevPakhomov:2019:GLPforTheoriesOfTruth} and to $\sf CT$ in \cite{Halbach:2014:AxiomaticTheoriesTruth}. We have the following well known result from \cite{Halbach:2014:AxiomaticTheoriesTruth}:
\begin{theorem}
	$ \pa({\sf T}) $ and $ \aca $ are proof theoretically equivalent. 
\end{theorem}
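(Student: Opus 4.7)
The plan is to go through the well-known equivalence between $\aca$ and the compositional truth theory $\sf CT$ (as in Halbach, Ch.~8--9), showing that our $\pa({\sf T})$ coincides with $\sf CT$ up to provable equivalence over $\ea^+$. I would exhibit the equivalence as two mutual interpretations, with the bridging observation being that the reflection schema $R_{<\omega 2}(\ea + \utb_{\mathcal{L}({\sf T})})$ built into the definition of $\pa({\sf T})$ is exactly what is needed to promote the disquotational biconditionals of $\utb$ to the compositional Tarskian clauses and to full $\mathcal{L}({\sf T})$-induction.

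First I would show that $\pa({\sf T})$ proves the full induction schema for $\mathcal{L}({\sf T})$-formulas. By Proposition~\ref{prop: UTB interraction with reflection}(iii), over $\ea + \utb_{\mathcal{L}({\sf T})}$ the principle $R_{<\omega 2}(\ea + \utb_{\mathcal{L}({\sf T})})$ unfolds into $\Pi^{\mathcal{L}({\sf T})}_{n+1}$-$\rfn(\ea + \utb_{\mathcal{L}({\sf T})})$ for every $n$. The classical Leivant--Beklemishev argument, carried out in the finite language $\mathcal{L}({\sf T})$ using the truth definitions ${\sf Tr}_{\Pi^{\mathcal{L}({\sf T})}_n}$ supplied by the preliminaries, then yields $I\Sigma^{\mathcal{L}({\sf T})}_n$ for every $n$. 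With full $\mathcal{L}({\sf T})$-induction available, an external induction on formula complexity converts the uniform $\utb$-biconditionals into the compositional clauses $T(\ulcorner \varphi \lor \psi \urcorner) \leftrightarrow T(\ulcorner \varphi \urcorner) \lor T(\ulcorner \psi \urcorner)$, etc., showing $\pa({\sf T}) \vdash \sf CT$; the reverse inclusion $\sf CT \vdash \pa({\sf T})$ follows since $\sf CT$ with full induction proves its own partial reflection principles, in particular $R_{<\omega 2}(\ea + \utb_{\mathcal{L}({\sf T})})$.

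For $\aca \vdash \pa({\sf T})$ (under a translation of ${\sf T}$), I would use arithmetical comprehension to produce the set $U \subseteq \omega$ of G\"odel numbers of true arithmetical sentences (with parameters coded appropriately); by Tarski's recursive construction this set exists in $\aca$, and defining ${\sf T}(x)$ as $x \in U$ verifies the $\utb_{\mathcal{L}({\sf T})}$ axioms, while full second-order induction of $\aca$ gives full $\mathcal{L}({\sf T})$-induction, from which $R_{<\omega 2}$ follows. Conversely, for $\pa({\sf T}) \vdash \aca$ I would interpret each arithmetical set $\{x : \varphi(x,\vec p)\}$ by the first-order predicate ${\sf T}(\ulcorner \varphi(\dot x, \dot{\vec p}) \urcorner)$; the arithmetical comprehension axiom of $\aca$ then reduces to a $\utb$-instance, and second-order induction of $\aca$ becomes induction over $\mathcal{L}({\sf T})$-formulas, which we have already secured in step one.

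The main obstacle is ensuring the interpretation does not secretly require truth of formulas already containing ${\sf T}$: $\utb_{\mathcal{L}({\sf T})}$ only disquotes $\mathcal{L}_{\pa}$-formulas, and $\aca$ only produces arithmetical truth, so one must be careful that the translations of $\aca$-formulas only introduce ${\sf T}$-applications to arithmetical code, and conversely that the interpretation of ${\sf T}$ inside $\aca$ uses only arithmetical comprehension and not truth for formulas containing ${\sf T}$ itself. This is precisely the calibration that places the equivalence at $\aca$ rather than at some ramified predicative extension; once it is verified, the equivalence follows from the arguments above and the classical Halbach--Kotlarski--Krajewski--Lachlan theorem for $\sf CT$.
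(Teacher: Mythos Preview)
The paper does not give its own proof of this theorem: it is stated as a well-known result and cited from Halbach's monograph \cite{Halbach:2014:AxiomaticTheoriesTruth}. The sentence immediately preceding the theorem already asserts (again by citation) that the paper's $\pa({\sf T})$ coincides with Halbach's ${\sf CT}$, so the theorem is really just invoking the classical ${\sf CT}\equiv\aca$ result. Your sketch is therefore not to be compared against a proof in the paper but against the literature it points to, and in that respect it follows the standard route.

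One step in your sketch deserves more care. You write that ``an external induction on formula complexity converts the uniform $\utb$-biconditionals into the compositional clauses''. External induction would only yield each compositional clause as a schema (one theorem per pair of formulas), whereas the axioms of ${\sf CT}$ are single sentences quantifying internally over codes. To pass from the $\utb$ schema together with full $\mathcal{L}({\sf T})$-induction to the genuinely uniform compositional axioms one must argue \emph{inside} the theory: typically by showing, via internal induction on syntactic build-up, that ${\sf T}$ agrees with the partial truth predicates ${\sf Tr}_{\Pi_n}$ on their domains, and then gluing these agreements. That is where the substantive work sits, and it is exactly what the reflection principle $R_{<\omega 2}(\ea+\utb)$ is calibrated to deliver; the Beklemishev--Pakhomov paper and Halbach \S 8--9 carry this out in detail. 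The rest of your outline (mutual interpretation of sets as ${\sf T}$-definable predicates, and of ${\sf T}$ as the arithmetical truth set constructed by comprehension) is the standard argument and is fine.
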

As such, we are going to use $\pa({\sf T})$ as a substitute for $\aca$ in our theorem on the equivalence between it and the corresponding worm principle. 

\begin{theorem} \label{thm:EWD omega2 equiv with 1-Con(ACA)}
	$ \ewd^{\omega 2} $ is equivalent to $ 1 $-$ \con(\pa ({\sf T}) ) $ in $ \ea $.
\end{theorem}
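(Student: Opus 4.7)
The plan is to adapt Beklemishev's classical theorem $\ea \vdash \ewd \leftrightarrow 1\text{-}\con(\pa)$ to the hyperarithmetical setting with modalities below $\omega 2$ and base theory $\pa({\sf T})$. Both $\ewd^{\omega 2}$ and $1\text{-}\con(\pa({\sf T}))$ are $\Pi_2$ sentences, so I would establish the two implications separately, each within $\ea$.

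\textbf{Direction $(\Leftarrow)$.} I would show that $\pa({\sf T}) \vdash \forall A \in \w^{\omega 2}\, \exists k\, A_k = \top$; the conclusion $\ewd^{\omega 2}$ then follows because this sentence is $\Pi_2$ and $1\text{-}\con(\pa({\sf T}))$ delivers $\Pi_2$-reflection over $\pa({\sf T})$. The internal provability in $\pa({\sf T})$ uses three ingredients: the arithmetical soundness of $\glp_{\omega 2}$ (Theorem~\ref{Arithmetical soundness for GLP L}) applied over $\pa({\sf T})$ together with the conservative $\utb_{<\omega 2}$-expansion; Corollary~\ref{cor: relationship between double bracket and double angle in GLP}, which guarantees $A\llbracket k\rrbracket <_0 A$ for every $A \neq \top$, so that a non-terminating worm battle would produce an infinite $<_0$-descending chain; and well-foundedness of $<_0$ on $\w^{\omega 2}/{\equiv}$, which $\pa({\sf T}) \equiv \aca$ proves because full second-order induction certifies transfinite induction up to the corresponding worm order type.

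\textbf{Direction $(\Rightarrow)$.} Working in $\ea + \ewd^{\omega 2}$, the plan is to extract from the worm battle a total function dominating every $\pa({\sf T})$-provably recursive function, and then invoke Lemma~\ref{lem: f^x (x) halts iff 1-con that f halts} to deduce $1\text{-}\con(\pa({\sf T}))$. Concretely, to each ordinal $\alpha$ below the order type of $\langle \w^{\omega 2}/{\equiv},\, <_0\rangle$ --- which by the Beklemishev--Pakhomov ordinal analysis matches the proof-theoretic ordinal of $\pa({\sf T})$ --- one assigns a canonical worm $W_\alpha$, and reads off, from the step-down recipe together with the Reduction Properties at successors and limits (Theorems~\ref{thm: generalized succ reduction property} and \ref{thm: generalized lim reduction property}), a Hardy-style recurrence for the battle-length function at $W_\alpha$.

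The main obstacle is this converse direction, specifically at the limit modalities. In $\w^{\omega 2}$, a modality $[\lambda]$ for limit $\lambda$ steps down through fundamental sequences $\lambda[k]$, so the worm-battle length function is naturally indexed by tree ordinals rather than set-theoretic ordinals, and the recurrence it satisfies only gives a tree-ordinal Hardy hierarchy. To conclude that the resulting function is genuinely fast-growing enough to dominate the $\pa({\sf T})$-provably recursive functions, one must majorize the tree-ordinal Hardy hierarchy by the standard Hardy hierarchy on ordinals --- precisely the comparison advertised in the abstract, which is what powers the final step of this direction.
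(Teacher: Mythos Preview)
Your $(\Leftarrow)$ direction contains a genuine error. You propose to show $\pa({\sf T}) \vdash \ewd^{\omega 2}$ and then apply $\Pi_2$-reflection. But this is impossible: the very theorem being proved says $\ewd^{\omega 2}$ is equivalent over $\ea$ to $1\text{-}\con(\pa({\sf T}))$, so $\pa({\sf T}) \vdash \ewd^{\omega 2}$ would yield $\pa({\sf T}) \vdash 1\text{-}\con(\pa({\sf T}))$, contradicting G\"odel's second incompleteness theorem. Concretely, your justification --- that $\aca$ has ``full second-order induction'' and hence proves transfinite induction along $\langle \w^{\omega 2}/{\equiv},<_0\rangle$ --- fails because that order has type equal to the proof-theoretic ordinal of $\pa({\sf T})$, which $\pa({\sf T})$ cannot prove well-founded. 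The paper avoids this by never asserting $\ewd^{\omega 2}$ inside $\pa({\sf T})$. Instead it shows, formalizably in $\ea$, that $\pa({\sf T}) \vdash A$ for every \emph{individual} worm $A\in\w^{\omega 2}$ (so $1\text{-}\con(\pa({\sf T}))$ yields $\forall A\,\langle 1\rangle A^+$), and separately proves in $\ea$ via L\"ob's theorem that $\langle 1\rangle A_0^+ \to \exists m\, A_m = \top$ for each $A$. The $(\cdot)^+$ shift and the L\"ob argument are precisely what let one push the universal quantifier outside the provability predicate.

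For $(\Rightarrow)$ your strategy is plausible but diverges from the paper, and you have misplaced the main obstacle. The paper does \emph{not} use the tree-ordinal versus set-ordinal Hardy comparison for $\pa({\sf T})$; that machinery is developed only for the $\is_n$ case, where the $(\cdot)^+$ trick loses a level. For $\pa({\sf T})$ the paper proceeds directly with the worm-Hardy functions $h_A$: after monotonicity estimates, a second L\"ob argument inside $\ea$ gives $h_{A1111}{\downarrow}\to \langle 1\rangle A$ for every $A\in\w_1^{\omega 2}$, whence $\ewd^{\omega 2}$ yields $\forall n\,\langle 1\rangle\langle\omega+n\rangle\top$ and thus $1\text{-}\con(\pa({\sf T}))$. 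Your route via domination of provably recursive functions and Lemma~\ref{lem: f^x (x) halts iff 1-con that f halts} could perhaps be made to work, but note that a single application of that lemma gives only $\langle 1\rangle_{\ea}(f{\downarrow})$, which is $1\text{-}\con$ of a finite extension of $\ea$, not of $\pa({\sf T})$; you would still need an additional reflection/reduction step to reach the infinitely axiomatized $\pa({\sf T})$.
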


At the same time, we will prove the corresponding equivalence between the worm principle and $\is_n$. We will find however that we will be in need of a different method to prove the full equivalence.

\begin{theorem} \label{thm: EWD and ISn}
$\ewd^{n+1}$ is equivalent to $1\text{-}\con(\is_n)$ in $\ea$.
\end{theorem}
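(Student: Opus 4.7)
Plan. The approach must diverge from the proof of Theorem~\ref{thm:EWD omega2 equiv with 1-Con(ACA)}, because for finite $\Lambda=n{+}1$ the limit-reduction property is vacuous and no modality $[n{+}1]$ is available inside $\glp_{n+1}$ to arithmetize $\is_n$ as $\la n{+}1\ra \monid$. Instead, I plan to route both sides of the equivalence through the totality of fast-growing Hardy-style functions, recasting $\ewd^{n+1}$ as a termination-of-recursion statement and matching it against the classical Wainer characterization of the provably total functions of $\is_n$.

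Setup and backward direction. Attach to every worm $A\in \W^{n+1}$ the function $h(A):=\min\{m : A_m=\monid\}$ with $A_0=A$ and $A_{k+1}=A_k\llbracket k{+}1\rrbracket$. By Corollary~\ref{cor: relationship between double bracket and double angle in GLP} the sequence $(A_k)$ strictly $<_0$-descends, so $\ewd^{n+1}$ is equivalent, provably in $\ea$, to totality of $h$. Moreover $h$ is naturally a Hardy-style function along the \emph{tree ordinal} whose fundamental sequences are dictated by the chop-and-regrow rule $\la m{+}1\ra B\mapsto (\la m\ra h_{m{+}1}(B))^{k+1} r_{m{+}1}(B)$. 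For the direction $1$-$\con(\is_n)\Rightarrow \ewd^{n+1}$ I would invoke Lemma~\ref{lem: f^x (x) halts iff 1-con that f halts} together with the Leivant--Beklemishev theorem to identify $1$-$\con(\is_n)$ with the provable totality over $\ea$ of a standard Hardy function $H$ at the $n$-th member of the tower $\omega,\omega^\omega,\omega^{\omega^\omega},\ldots$; then apply the majorization lemma announced in the abstract to extract, for each $A\in\W^{n+1}$, an effectively computable Cantor-normal-form ordinal $o(A)$ below that tower such that $h(A)\leq H_{o(A)}(k{+}c_A)$ for some elementary shift $c_A$. Totality of $H$ then forces totality of every $h(A)$, i.e.~$\ewd^{n+1}$.

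Forward direction. For $\ewd^{n+1}\Rightarrow 1$-$\con(\is_n)$ I plan to exhibit, for each ordinal $\beta$ below the $n$-th tower, a worm $A_\beta\in \W^{n+1}$ whose step-down length $h(A_\beta)$ majorizes the standard Hardy function $H_\beta$ on a cofinal set of inputs. The base case uses $\la n\ra^{k+1}\monid$ to reach the $(n{-}1)$-th tower, and higher $\beta$ are captured inductively by interleaving lower-rank modalities in a manner which, through the chop-and-regrow rule, realizes exactly the climb $\alpha\mapsto\omega^\alpha$ expected in the Wainer hierarchy. Once totality of every $H_\beta$ with $\beta$ below the $n$-th tower is secured by $\ewd^{n+1}$, the standard Buchholz--Wainer classification of the provably total functions of $\is_n$ yields $1$-$\con(\is_n)$ over $\ea$.

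Main obstacle. The Hardy-majorization lemma in Step~2 is the technical heart and is precisely the reason the direct arithmetical-soundness argument of Theorem~\ref{thm:EWD omega2 equiv with 1-Con(ACA)} cannot be reused: the chop-and-regrow step inflates the head of the worm by a multiplicative factor $k{+}1$ before descending the outer modality, so a naive induction on modality rank loses control of the growth. One has to simultaneously track the modality rank and an explicit elementary shift linking the tree-ordinal descent of $A$ to the Cantor-normal-form descent of $o(A)$, which is exactly the statement --- promised in the abstract --- that standard Hardy functions majorize their tree-ordinal variants. I expect this comparison to absorb the bulk of the proof effort.
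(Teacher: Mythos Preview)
Your diagnosis is half right. The direction $1\text{-}\con(\is_n)\Rightarrow \ewd^{n+1}$ genuinely cannot reuse the $\omega 2$ argument: the paper's Proposition~\ref{Theorem:ACAIsigOneConsistencyImpliesEWD} relies on the shift $A\mapsto A^+$, which pushes worms in $\W^{n+1}$ out of $\W^{n+1}$, and indeed only yields $\ewd^n$. For this direction the paper does exactly what you outline---route through iterated $\Pi_2$-reflection, fast-growing/Hardy hierarchies, and the majorization of tree-ordinal Hardy functions by the standard ones (Theorems~\ref{thm: Hardy on ordinals give hardy on tree ordinals} and~\ref{thm: hardy on tree ordinals bounds worm based hardy}, assembled in Theorem~\ref{theorem:fineStructureForWorms}). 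Your ``main obstacle'' paragraph correctly identifies where the work lies.

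For the forward direction $\ewd^{n+1}\Rightarrow 1\text{-}\con(\is_n)$, however, your opening claim is mistaken: the $\omega 2$ argument \emph{does} transfer. Nothing in Lemma~\ref{lem:h haults into 1 con of worm for ACA} or the successor reduction property requires limit modalities, and the modality $\la n{+}1\ra$ is perfectly available \emph{arithmetically} even though no worm in $\W^{n+1}$ uses it. The paper simply observes that $\ewd^{n+1}$ gives totality of $h_A$ for all $A\in\W^{n+1}_1$, applies Lemma~\ref{lem:h haults into 1 con of worm for ACA} to the worms $\la n{+}1\ra\monid\llbracket k\rrbracket=\la n\ra^{k+1}\monid\in\W^{n+1}_1$ to obtain $\forall k\,\la 1\ra\la n\ra^{k+1}\monid$, and then invokes the successor reduction property to conclude $\la 1\ra\la n{+}1\ra\monid$. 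Your alternative---building worms $A_\beta$ that majorize $H_\beta$ and then appealing to the Buchholz--Wainer classification---would likely work, but it duplicates effort: you would be proving the \emph{reverse} of the majorization inequality you already need for the other direction, whereas the paper gets this implication essentially for free from the L\"ob-based machinery already developed for the $\omega 2$ case.
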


We will begin proving the theorems simultaneously since the corresponding proofs for both are similar. To this end, we will make an abuse of notation using the fact that provably over $\ea$, $\ea + \utb$ is a conservative extension of $\ea$ for $\mathcal{L}$-formulas.
For the remainder of this paper we write $ [\alpha] \varphi $ to mean $ [\alpha]_{\ea + \utb} \varphi $.
Note that if $\varphi \in \mathcal{L}$, then $ [\alpha] \varphi $ is equivalent to $ [\alpha]_{\ea} \varphi $ due to conservativity.
We will make the same convention for the $\la \alpha \ra \varphi$ and the proof theoretic worms.

\subsection{From 1-consistency to the worm principle}
The initial proof for both directions will follow the structure of the corresponding proof in \cite{Beklemishev:2005:Survey}. Observe the weaker implication in the case of $1 \text{-} \con(\is_n)$.
\begin{proposition} \label{Theorem:ACAIsigOneConsistencyImpliesEWD}\
	\begin{enumerate}
	    \item $ \ea +  1 \textnormal{-} \con(\pa ({\sf T}) ) \vdash \ewd^{\omega 2} $;
	    \item $\ea + 1 \text{-} \con(\is_n) \vdash \ewd^{n}$.
	\end{enumerate}
\end{proposition}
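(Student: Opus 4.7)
The plan is to argue uniformly by contradiction, adapting Beklemishev's proof of the classical $\ewd \Leftrightarrow 1\text{-}\con(\pa)$ equivalence~\cite{Beklemishev:2005:Survey}. Let $T$ denote $\pa({\sf T})$ in case~(1) and $\is_n$ in case~(2), with $\Lambda$ being $\omega 2$ or $n$ respectively. Working in $\ea + 1\text{-}\con(T)$, suppose $\neg \ewd^\Lambda$ and fix a worm $A \in \w^\Lambda$ whose stepping-down sequence $A_0 := A$, $A_{k+1} := A_k\llbracket k+1\rrbracket$ never reaches $\top$.

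The strategy is to bound the length of this sequence by a Hardy-type fast-growing function indexed by ordinals. I would first assign to each worm $B \in \w^\Lambda$ an ordinal $|B|$: for $\Lambda = n$ this is the classical correspondence of~\cite{Beklemishev:2004:ProvabilityAlgebrasAndOrdinals}, while for $\Lambda = \omega 2$ I would extend this to the hyperarithmetical setting using the Reduction Properties of Subsection~4.1 together with the proof-theoretic analysis of $\pa({\sf T})$ from~\cite{BeklemishevPakhomov:2019:GLPforTheoriesOfTruth}. Using Corollary~\ref{cor: relationship between double bracket and double angle in GLP} (which gives $A_{k+1} <_0 A_k$) together with the arithmetical soundness of $\glp_\Lambda$ (Theorem~\ref{Arithmetical soundness for GLP L}), one verifies that the step-down operation strictly decreases this ordinal.

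A standard recursion on ordinals then yields a Hardy-type function $H$ such that the sequence $A_0, A_1, \ldots$ reaches $\top$ in at most $H_{|A|}(k_0)$ steps (for some explicit $k_0$), provided $H_{|A|}$ is defined. The classical equivalence over $\ea$ between totality of Hardy functions at ordinals below the proof-theoretic ordinal of $T$ and $1\text{-}\con(T)$ --- specializing Lemma~\ref{lem: f^x (x) halts iff 1-con that f halts} via the ordinal analyses in~\cite{Beklemishev:2005:Survey} and~\cite{BeklemishevPakhomov:2019:GLPforTheoriesOfTruth} --- shows that $H_{|A|}$ is total under our hypothesis, contradicting the choice of $A$.

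The main obstacle in case~(1) is the worm-to-ordinal correspondence for $\w^{\omega 2}$, which must interact correctly with the iterated truth predicates ${\sf T}_\beta$, with the step-down function on worms, and with the Hardy recursion on ordinals. Case~(2) is comparatively tame, since $\ewd^n$ (and not the tight $\ewd^{n+1}$ of Theorem~\ref{thm: EWD and ISn}) only concerns ordinals strictly below the proof-theoretic ordinal of $\is_n$; the finer tree-ordinal Hardy comparison announced in the abstract is reserved for the full equivalence and is not needed here.
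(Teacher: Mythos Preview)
Your approach is genuinely different from the paper's, and has a real gap.

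\medskip

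\textbf{What the paper does.} The paper avoids Hardy functions entirely for this direction. It introduces the shift operator $A\mapsto A^+$ (add $1$ to every modality) and proves, by a L\"ob-theorem argument carried out purely in provability logic, that
\[
\ea\vdash \forall A\in\w^{\omega 2}\ \big(\langle 1\rangle A^+ \rightarrow \exists m\,A_m=\top\big).
\]
Since $\pa({\sf T})$ proves every worm in $\w^{\omega 2}$ (and $\is_n$ proves every worm in $\w^{n+1}$, which is where the shifted worms $A^+$ for $A\in\w^n$ live), applying $1\text{-}\con$ yields $\langle 1\rangle A^+$ for each relevant $A$, and the result follows. No ordinal assignment and no Hardy hierarchy appear.

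\medskip

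\textbf{The gap in your proposal.} Your key step ``a standard recursion on ordinals then yields a Hardy-type function $H$ such that the sequence $A_0,A_1,\ldots$ reaches $\top$ in at most $H_{|A|}(k_0)$ steps'' is precisely the non-standard part. The step-down operation $A\llbracket k\rrbracket$ does \emph{not} match the standard fundamental sequences on ordinals: the induced ``worm Hardy'' function $h_A$ is governed by different fundamental sequences than $H_{o(A)}$, and comparing them inside $\ea$ is exactly the content of Section~5 (tree ordinals, the correction function $Cr$, Theorems~\ref{thm: Hardy on ordinals give hardy on tree ordinals} and~\ref{thm: hardy on tree ordinals bounds worm based hardy}). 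You dismiss this as ``not needed here'', but some form of it is unavoidable if you go through Hardy functions --- even for the weak $\ewd^n$ you must still show, in $\ea$, that totality of the \emph{standard} Hardy functions implies termination of the \emph{worm} sequence. For case~(1) the situation is worse: you would need a Hardy-function analysis beyond $\varepsilon_0$, calibrated to the proof-theoretic ordinal of $\pa({\sf T})$, which is not available in the paper and not a routine extension.

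\medskip

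\textbf{Comparison.} The paper's L\"ob/$(\cdot)^+$ argument is short, uniform in $\Lambda$, and stays entirely within the modal calculus; it buys you both cases at once with no ordinal bookkeeping. Your route, if completed, would give a more quantitative link between $\ewd$ and provably total functions --- but that is precisely what the paper does separately (and laboriously) in Section~5 for the sharper $\ewd^{n+1}$, not here.
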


There is a distinction in the first step of this proof, due to the fact that $\is_n$ is an extension of $\ea$ with reflection for a successor ordinal, in comparison to $\pa$ or $\pa({\sf T})$, which correspond to reflection for a limit.
\begin{lemma}
	\label{lem: worms that PA(T) proves}
	For any $ A {\in} \w^{\omega 2} $, $ \pa({\sf T}) \vdash A $.
\end{lemma}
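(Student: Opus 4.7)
The plan is to induct externally on the length of $A \in \W^{\omega 2}$. The base case $A = \top$ is immediate since any theory proves $\top$. For the inductive step, write $A = \la \alpha \ra B$ with $\alpha < \omega 2$ and $B$ a shorter worm in $\W^{\omega 2}$; the induction hypothesis yields $\pa({\sf T}) \vdash B$.

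Set $T := \ea + \utb_{\mathcal{L}({\sf T})}$, so that $\pa({\sf T}) = T + R_{<\omega 2}(T)$. Using compactness together with the fact that $R_\delta(T) \vdash R_\gamma(T)$ for $\gamma \leq \delta$, I would then extract a single $\gamma < \omega 2$ (arranged to satisfy $\gamma \geq \alpha$) with $T + R_\gamma(T) \vdash B$. The target sentence $\la \alpha \ra B$ is by definition the finite axiomatization of $R_\alpha(T + B)$. Since $T + R_\gamma(T) \vdash B$ is witnessed by a concrete proof, $\ea$ provably verifies $\Box_T(R_\gamma(T) \to B)$, so by the monotonicity clause of the first lemma of Section 3 the task reduces to establishing $\pa({\sf T}) \vdash R_\alpha(T + R_\gamma(T))$.

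To prove $R_\alpha(T + R_\gamma(T))$ inside $\pa({\sf T})$, let $\sigma \in \Pi_{1+\alpha}$ be a consequence of $T + R_\gamma(T)$, equivalently $T \vdash R_\gamma(T) \to \sigma$. By Proposition \ref{prop: UTB interraction with reflection}(ii), the finite axiomatization of $R_\gamma(T)$ lies in $\Pi_\gamma$ of the hyperarithmetical hierarchy, so the implication $R_\gamma(T) \to \sigma$ can be placed into $\Pi_\delta$ for some $\delta$ a few quantifier alternations above $\gamma$. Because $\omega 2$ is a limit ordinal, $\delta < \omega 2$, so $R_\delta(T)$ is among the axioms of $\pa({\sf T})$ and certifies the truth of $R_\gamma(T) \to \sigma$; combined with the truth of $R_\gamma(T)$ itself, also an axiom of $\pa({\sf T})$, we conclude $\sigma$. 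Quantifying over all such $\sigma$ gives $R_\alpha(T + R_\gamma(T))$, as required.

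The principal delicate step is the complexity bookkeeping: one must verify carefully that the finite axiomatization of $R_\gamma(T)$ indeed sits in $\Pi_\gamma$ of the iterated-truth-predicate hierarchy, and consequently that the implication $R_\gamma(T) \to \sigma$ falls within $\Pi_\delta$ for some $\delta$ with $\gamma < \delta < \omega 2$. The argument is structurally parallel to the classical fact that $\pa$ proves every worm in $\W^\omega$, with $\pa({\sf T})$ and $\omega 2$ playing the roles of $\pa$ and $\omega$; what makes the step go through is simply that $\omega 2$ is a limit, so $\gamma + k < \omega 2$ whenever $\gamma < \omega 2$ and $k < \omega$, leaving ample room in the reflection hierarchy of $\pa({\sf T})$ to absorb one more step.
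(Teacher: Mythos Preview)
Your proposal is correct, but the paper takes a much shorter route. Rather than inducting on the length of $A$ and doing the complexity bookkeeping by hand, the paper simply observes that any $A\in\W^{\omega 2}$ lies in some $\W^{\omega+m}$, applies Proposition~\ref{prop: <a>^n+1 implies smaller A with <a>^n } to obtain $\glp_{\omega 2}\vdash \la\omega+m\ra\top\to A$, transfers this to $\ea+\utb$ via arithmetical soundness (Theorem~\ref{Arithmetical soundness for GLP L}), and then notes that $\la\omega+m\ra\top$ is literally one of the axioms $R_{\omega+m}(T)$ of $\pa({\sf T})$. All the work is done once, in the modal logic, and then exported.

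Your approach instead unwinds the inductive content directly in arithmetic: compactness gives a single $\gamma<\omega 2$ with $T+R_\gamma(T)\vdash B$, and then you argue that $\pa({\sf T})\vdash R_\alpha(T+R_\gamma(T))$ using one more step of reflection. This is sound, and your diagnosis of the delicate point is accurate. Note, though, that the step you flag as delicate can itself be replaced by a one-line $\glp$ argument: for $\alpha\le\gamma$ one has $\glp\vdash\la\gamma+1\ra\top\to\la\alpha\ra\la\gamma\ra\top$ (monotonicity plus one application of the step-down $Q^\gamma_1$), so soundness gives $\ea+\utb\vdash\la\gamma+1\ra\top\to\la\alpha\ra\la\gamma\ra\top$, and $\pa({\sf T})$ contains $\la\gamma+1\ra\top$. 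Combined with $T+\la\gamma\ra\top\vdash B$ and the monotonicity lemma, this yields $\pa({\sf T})\vdash\la\alpha\ra B$ without any truth-predicate manipulation. In effect, the paper's proof is what you get by pushing this observation all the way through and absorbing the induction on $|A|$ into Proposition~\ref{prop: <a>^n+1 implies smaller A with <a>^n }.
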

\begin{proof}
	For every $ A \in \w^{\omega 2} $, there is some $ m>0 $ such that $ A \in \w^{\omega {+} m} $ and so by Proposition \ref{prop: <a>^n+1 implies smaller A with <a>^n },
	$$ \glp \vdash \la \omega + m \ra\top \rightarrow A.
	$$
	Therefore, by arithmetical soundness of $\glp $, it holds that $ \ea + \utb \vdash \la \omega {+} m \ra\top \rightarrow A $ and since $ \pa({\sf T}) \vdash \la \omega{+}m \ra \top $, the lemma follows and its proof is formalizable in $ \ea $ (or $\ea^+$ if we are to use the corresponding $\pa({\sf T})$ in \cite{BeklemishevPakhomov:2019:GLPforTheoriesOfTruth}).
\end{proof}
Similarly for the $\is_n$, we have the corresponding theorem giving us the proof theoretic worms we can make use of in its case.
\begin{lemma}
	\label{lem: worms that IS proves}
	For any $ A {\in} \w^{n+1} $, $ \is_n \vdash A $.
\end{lemma}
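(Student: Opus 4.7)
The approach mirrors the proof of the preceding lemma for $\pa({\sf T})$ and breaks into three clean steps.

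First, I would apply the earlier proposition that states $\glp_\Lambda \vdash \la \alpha \ra^{n+1}\top \to AB$. Specializing its ordinal parameter to $n+1$, placing the given worm in the left slot (valid since $A \in \W^{n+1}$), and taking the right slot to be the empty worm $B := \top$ of length $0$ (so that the proposition's internal length-parameter may also be chosen to be $0$), its conclusion collapses to $\glp_\Lambda \vdash \la n+1\ra \top \to A$. This is the direct analogue of the step in Lemma \ref{lem: worms that PA(T) proves} that produced $\glp \vdash \la \omega + m\ra \top \to A$.

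Second, arithmetical soundness of $\glp_\Lambda$ promotes this modal derivation to $\ea + \utb_{<\Lambda} \vdash \la n+1\ra \top \to A$. Since every modality occurring is a finite ordinal, both $\la n+1\ra\top$ and $A$ translate to first-order $\mathcal{L}_{\pa}$-sentences; the conservativity of $\utb$ over $\ea$ on such formulas then yields $\ea \vdash \la n+1\ra \top \to A$.

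Third, the Leivant--Beklemishev theorem identifies $\is_n$ with $\Sigma_{n+1}\textnormal{-}\rfn(\ea)$ provably over $\ea^+$, and the latter is precisely the finite axiomatization $\la n+1\ra \top$. Since $\is_n \supseteq \ea^+$ for $n \geq 1$, we conclude $\is_n \vdash \la n+1\ra\top$, and modus ponens against the second step yields $\is_n \vdash A$. The whole argument is formalizable in $\ea^+$, a requirement inherited from the Leivant--Beklemishev equivalence. I do not anticipate any substantive obstacle here; the only point requiring care is to instantiate the proposition so that its reflection premise matches exactly the strength $\la n+1\ra\top$ that $\is_n$ supplies, which is why $A$ must sit in the left slot and $\top$ in the right rather than vice versa.
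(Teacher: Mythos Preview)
Your proposal is correct and matches the paper's proof essentially step for step: apply Proposition~\ref{prop: <a>^n+1 implies smaller A with <a>^n } with $\alpha = n+1$, the worm $A$ in the left slot, $B=\top$, and the length parameter $0$ to get $\glp_\Lambda \vdash \la n+1\ra\top \to A$; transfer this to $\ea$ by arithmetical soundness; and conclude via $\is_n \vdash \la n+1\ra\top$ from the Leivant--Beklemishev equivalence, the whole argument being formalizable in $\ea^+$. Your remark that $A$ must occupy the left slot (so the premise collapses to a single $\la n+1\ra$) is exactly the right instantiation, and your implicit restriction to $n\geq 1$ is consistent with how the paper uses the Leivant--Beklemishev theorem.
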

\begin{proof}
	By Proposition \ref{prop: <a>^n+1 implies smaller A with <a>^n } we have that for every $ A \in \w^{n+1} $,
	$ \glp \vdash \la n+1 \ra\top \rightarrow A.
	$
	Therefore, by arithmetical soundness of $\glp $, it holds $ \ea \vdash \la n+1 \ra\top \rightarrow A $ and since $ \is_n \vdash \la n+1 \ra \top $, the lemma follows and its proof is formalizable in $ \ea^+ $.
\end{proof}

Now we introduce a notation we will use for the remainder of the proof of this direction. Given a worm $A$, we define $A^+$ inductively by $\top ^+ := \top$ and if $A = \la\alpha\ra B$ then $A^+ = \la \alpha +1\ra (B^+)$. 

\begin{lemma}
	\label{lem:provable reduction by next for ACA}
 
	$ \ea  \vdash \forall \, A {\in} \w^{\omega 2}\,  \forall \, k \ \big( A_k {\neq} \monid \rightarrow \Box (A_k ^+ \rightarrow \left<1 \right> A_{k+1 } ^+)\big)$. 
\end{lemma}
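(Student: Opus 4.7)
The plan is to work in $\ea$ and construct, uniformly elementarily in $A$ and $k$, a $\glp_{\omega 2}$-derivation of $A_k^+ \to \la 1\ra A_{k+1}^+$, then invoke arithmetical soundness of $\glp_\Lambda$ (Theorem \ref{Arithmetical soundness for GLP L}) to translate this into an $\ea+\utb$-proof and conclude by formalised $\Sigma_1$-completeness. Morally, the derivation is obtained from Corollary \ref{cor: relationship between double bracket and double angle in GLP} (which gives $\glp \vdash A_k \to \la 0\ra A_{k+1}$) by shifting every modality up by one.

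Concretely, I case-analyse the head of $A_k$. If $A_k = \la 0\ra B$ then $A_{k+1} = B$, and the target implication is the tautology $\la 1\ra B^+ \to \la 1\ra B^+$. If $A_k = \la \alpha+1\ra B$, then the definition of $\llbracket\cdot\rrbracket$ together with the lemma preceding Corollary \ref{cor: relationship between double bracket and double angle in GLP} give $A_{k+1} \equiv Q^{\alpha}_{k+1}(B)$ in $\glp_{\omega 2}$, hence $A_{k+1}^+ \equiv Q^{\alpha+1}_{k+1}(B^+)$; the step-down fact $Q^{\alpha+1}_{k+1}(\psi) <_\beta \la \alpha+2\ra \psi$ for $\beta \leq \alpha+1$, specialised to $\beta = 1$, yields $\glp \vdash \la \alpha+2\ra B^+ \to \la 1\ra Q^{\alpha+1}_{k+1}(B^+)$, i.e.\ $A_k^+ \to \la 1\ra A_{k+1}^+$. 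Finally, if $A_k = \la \omega\ra B$ (the only limit head available below $\omega 2$), then $A_k^+ = \la \omega+1\ra B^+$ and $A_{k+1}^+ = \la \omega[k+1]+1\ra B^+$; starting from $\la \omega+1\ra B^+$, monotonicity delivers $\la \omega[k+1]+1\ra B^+$, and Lemma \ref{lem: basic properties of glp concerned with building up}(ii) merges them into $\la \omega+1\ra(B^+ \wedge \la \omega[k+1]+1\ra B^+)$, which after dropping the conjunct $B^+$ inside and a second monotonicity step becomes $\la 1\ra \la \omega[k+1]+1\ra B^+$.

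In each case the $\glp_{\omega 2}$-derivation is elementary in $A$ and $k$, so arithmetical soundness (formalised in $\ea$) yields $\ea + \utb \vdash A_k^+ \to \la 1\ra A_{k+1}^+$, and formalised $\Sigma_1$-completeness internalises this as $\ea \vdash \Box(A_k^+ \to \la 1\ra A_{k+1}^+)$. The principal subtlety is the limit case, where combining the monotonicity axiom with Lemma \ref{lem: basic properties of glp concerned with building up}(ii) is required to move the outer $\la \omega+1\ra$-modality past the weaker $\la \omega[k+1]+1\ra$; this is precisely the shifted counterpart of the monotonicity step used in the limit clause of the proof of Corollary \ref{cor: relationship between double bracket and double angle in GLP}.
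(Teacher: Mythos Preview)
Your proof is correct and follows the same strategy as the paper: produce, elementarily in $A$ and $k$, a $\glp_{\omega 2}$-derivation of $A_k^+ \to \la 1\ra A_{k+1}^+$, then invoke formalised arithmetical soundness. The paper compresses your case analysis into the single observation that $\glp_{\omega 2}$-theorems are preserved under the $(\cdot)^+$ shift, applied to Corollary~\ref{cor: relationship between double bracket and double angle in GLP}. One terminological point: your final appeal to ``formalised $\Sigma_1$-completeness'' is not quite the right tool, since that principle handles individual true $\Sigma_1$ sentences, not a uniform $\forall A\,\forall k$ claim; what is actually needed---and what the paper makes explicit---is that the elementary bound on the derivation lets one rewrite $\Box(\cdots)$ as a $\Delta_0$ formula, after which the universal statement follows directly (or by $\Delta_0$-induction) inside $\ea$.
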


\begin{toappendix}

\begin{proof}
	It is sufficient to prove in $ \ea  $
	\[
	\forall\, A {\neq} \monid \ \forall \, k \ \ea + \utb \vdash A^+ \rightarrow \left<1 \right> A\llbracket k \rrbracket^+  .
	\]

	For this, we will move over to $ \glp_{\omega 2} $ where we have that the following proof is bounded by a function elementary in $A$ and $k$ and hence it is formalizable in $ \ea $ that $\glp_{\omega 2} \vdash A \rightarrow \Diamond A \llbracket k \rrbracket,$
	and as theorems of $ \glp_{\omega 2} $ are stable under the $(\cdot)^+$ operator, $\glp_{\omega 2} \vdash A ^+ \rightarrow \left< 1 \right> A \llbracket k \rrbracket ^+ ,$
	which by the arithmetical soundness of $ \glp_{\omega 2} $, proves that for every $ A \in \w^{\omega 2} $ with $ A \neq \monid $ and for every $ k $, 
	\[\ea + \utb \vdash A^+ \rightarrow \left<1 \right> A\llbracket k \rrbracket^+  .
	\]
	From here, we are of course unable to use $ \Sigma_1 $-induction to prove 
	\[
	\ea \vdash \forall \, k \ \big( A_k {\neq} \monid \rightarrow \Box (A_k ^+ \rightarrow \left<1 \right> A_{k+1 } ^+)\big), 
	\]
	which is how we would --in principle-- expect to complete the proof. Instead we use the fact that for a given $ k $,
	the proof of $A_k ^+ \rightarrow \left<1 \right> A_{k+1 } ^+$ is bounded by an elementary function of $A$ and $ k $. The proof itself can be formalized within $ \ea $ and therefore the formula $ \Box (A_k ^+ \rightarrow \left<1 \right> A_{k+1 } ^+)\big) $ can be written as a $ \Delta_0 $-formula by placing the existential quantifier inside this bound. So we complete the proof with a $ \Delta_0 $-induction.
\end{proof}

\end{toappendix}

\begin{lemma}
	\label{lem: 1 con of worm proves it dies up to omega 2}
	$ \ea  \vdash \forall A {\in} \w ^{\omega 2}  \big( \left< 1 \right> A^+ _0 \rightarrow \exists \, m \ A_m {=} \monid \big) .$
\end{lemma}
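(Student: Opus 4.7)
The strategy is to propagate the 1-reflection encoded in $\langle 1\rangle A^+$ along the worm descent using Lemma~\ref{lem:provable reduction by next for ACA}, then to extract a termination witness using the $\Pi_2$-reflection that $\langle 1\rangle A^+$ supplies. I argue inside $\ea$, fix $A\in\w^{\omega 2}$, and assume $\langle 1\rangle A^+$; write $A_0:=A$.

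First I would iterate the previous lemma. From it we already have, for each $k$, $\ea\vdash\Box(A_k^+\to\langle 1\rangle A_{k+1}^+)$ whenever $A_k\ne\top$. Monotonicity of $\langle 1\rangle$ together with the derivable $\bgl$-principle $\langle 1\rangle\langle 1\rangle\varphi\to\langle 1\rangle\varphi$ (essentially the contrapositive of the transitivity/L\"ob axiom) chain these into a uniform sequence of composable reflections, yielding, for each standard $k$, an $\ea$-proof of $A^+\to\langle 1\rangle A_k^+$ whose G\"odel code is bounded by an elementary function of $k$ (and the code of $A$). This elementary bound allows us to express $\Box(A^+\to\langle 1\rangle A_k^+)$ as a $\Delta_0$-formula, and $\Delta_0$-induction on $k$ then gives, in $\ea$,
\[
\forall k\,\bigl(\forall i{<}k\,A_i\ne\top\ \to\ \Box(A^+\to\langle 1\rangle A_k^+)\bigr).
\]

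Next I would invoke the $\Pi_2$-reflection supplied by $\langle 1\rangle A^+$ (Proposition~\ref{prop: UTB interraction with reflection}) applied to the $\Pi_2$-formula $\langle 1\rangle A_k^+$ to upgrade the previous line to
\[
\ea+\langle 1\rangle A^+\ \vdash\ \forall k\,\bigl(\forall i{<}k\,A_i\ne\top\ \to\ \langle 1\rangle A_k^+\bigr).
\]
Assuming for contradiction that $\forall m\,A_m\ne\top$, we obtain $\langle 1\rangle A_k^+$ for every $k$, so each $A_k^+$ is $\Pi_1$-consistent with $\ea$. Combined with the strict $<_0$-descent from Corollary~\ref{cor: relationship between double bracket and double angle in GLP}, this exhibits an infinite chain of 1-consistent worms that is strictly decreasing in the provability ordering.

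The hard part will be closing off this contradiction: turning the infinite chain of 1-consistencies along a strictly decreasing descent into an actual inconsistency of $\ea+\langle 1\rangle A^+$. Following Beklemishev's strategy for the analogous $1\text{-}\con(\pa)\vdash\ewd^{\omega}$, the plan is to exhibit a recursive function whose totality is provably equivalent to $\langle 1\rangle A^+$ via Lemma~\ref{lem: f^x (x) halts iff 1-con that f halts}, and whose values bound the termination index of the descent; a L\"ob-style diagonal argument then produces the required $m$ with $A_m=\top$. Keeping track of the elementary bounds in the iteration and the careful formalization of iterated $\Pi_2$-reflection inside the base theory $\ea$ are the main technical points.
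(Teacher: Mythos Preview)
Your iteration step is sound: from Lemma~\ref{lem:provable reduction by next for ACA} one can indeed obtain, uniformly in $k$, a $\Box$-provability of $A^+\to\langle 1\rangle A_k^+$ (under the running hypothesis $\forall i{<}k\,A_i\ne\top$), and the $\Pi_2$-reflection packaged in $\langle 1\rangle A^+$ then yields $\langle 1\rangle A_k^+$ for every $k$. The problem is the last paragraph: you explicitly flag that ``closing off this contradiction'' is the hard part, and then do not close it. Having $\forall k\,\langle 1\rangle A_k^+$ together with $\forall m\,A_m\ne\top$ is not by itself inconsistent over $\ea$; an infinite $<_0$-descending chain of $1$-consistent worms is exactly the sort of configuration a nonstandard model can accommodate, and nothing you have written rules it out. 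Your appeal to Lemma~\ref{lem: f^x (x) halts iff 1-con that f halts} is also misplaced---that lemma converts termination of an iterated function into $1$-consistency, which is the converse direction, and in the paper it is used only for Proposition~\ref{prop:EWD to ACA}, not here.

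The paper closes the gap by a direct L\"ob argument, but on the negative side rather than by producing a witness. One works contrapositively and shows that $\forall m\,A_m\ne\top$ entails the L\"ob premise for $\psi:=\forall m\,[1]\neg A_m^+$: from $[1]\psi$ one extracts $\forall m\,[1][1]\neg A_{m+1}^+$, and Lemma~\ref{lem:provable reduction by next for ACA} in contrapositive form, $[1]\big([1]\neg A_{k+1}^+\to\neg A_k^+\big)$ whenever $A_k\ne\top$, then yields $\psi$. After $[1]$-necessitation and L\"ob this gives $[1](\forall m\,A_m\ne\top)\to[1]\psi$; since $\forall m\,A_m\ne\top$ is $\Pi_1$, provable $\Sigma_2$-completeness for $[1]$ discharges the outer $[1]$, and one lands on $\forall m\,A_m\ne\top\to[1]\neg A_0^+$, whose contrapositive is the lemma. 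The essential point your sketch misses is that L\"ob is applied to the \emph{negative} target $\forall m\,[1]\neg A_m^+$; there is no ``diagonal'' and no function whose totality needs to be invoked.
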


\begin{toappendix}

\begin{proof}
	We prove the contrapositive. 
	The first part of our reasoning will prepare for an application of L\"ob's theorem.
	Reasoning within $\ea$,
	\[  [1] \, \forall \, m \ [1] \neg A_m ^+ \vdash [1] \, \forall \, m \ [1] \neg A_{m+1} ^+
	\vdash  \forall \,m \ [1][1] \neg A_{m+1} ^+
.\]
Therefore, using Lemma \ref{lem:provable reduction by next for ACA} in the form $ \ea  \vdash \forall \, k \ \big( A_k {\neq} \monid \rightarrow [1]  ( [1] \neg A_{k+1} ^+ \rightarrow \neg A_{k } ^+)\big)$,
		\[\forall \, m \ A_m {\neq} \monid \ \wedge \ [1] \, \forall \, m \ [1] \neg A_m ^+ \vdash 
		\forall \, m \ A_m {\neq} \monid \ \wedge \ \forall \,m \ [1][1] \neg A_{m+1} ^+
	\vdash \forall \, m \ [1] \neg A_m ^+ .\]
Thus $\ea   \vdash \forall \, m \ A_m {\neq} \monid \rightarrow ([1] \, \forall \, m \ [1] \neg A_m ^+  \rightarrow \forall \, m \ [1] \neg A_m ^+).$
	Then, after necessitation on the $[1]$-modality and distribution we have
$\ea  \vdash [1] \, \forall \, m \ A_m {\neq} \monid \rightarrow [1]([1] \, \forall \, m \ [1] \neg A_m ^+  \rightarrow \forall \, m \ [1] \neg A_m ^+),$
hence by Löb's theorem		$\ea  \vdash [1] \, \forall \, m \ A_m {\neq} \monid \rightarrow \, [1] \forall \, m [1] \neg A_m ^+$.

	Now observe that $ \forall \, m \ A_m {\neq} \monid $ is $ \Pi_1 $, so certainly $ \Sigma_2 $ and hence by $\Sigma_2$-completeness
\[\ea  \vdash \forall \, m \ A_m {\neq} \monid  \rightarrow [1] \, \forall \, m \ A_m {\neq} \monid.\]
But then in $\ea$,
\[\forall \, m \ A_m {\neq} \monid  \vdash \forall \, m \ A_m {\neq} \monid \ \wedge \ [1] \, \forall \, m [1] \neg A_m ^+ \vdash   \forall \, m \ [1] \neg A_m ^+\vdash [1] \neg A_0 ^+.\]
	By contraposition $\ea\vdash \langle 1\rangle A_0 ^+ \to \exists \, m \ A_m {=} \monid$, as desired.
\end{proof}

\end{toappendix}

Note that the use of $A^+$ in the above lemma does not allow us to apply it to $\ewd^{n+1}$ in place of $\ewd^{\omega 2}$.
Moreover, it cannot be avoided using the current proof.
Now we prove Proposition \ref{Theorem:ACAIsigOneConsistencyImpliesEWD}: from Lemmata \ref{lem: worms that PA(T) proves} and \ref{lem: 1 con of worm proves it dies up to omega 2} we obtain that for each $ A {\in} \w^{\omega 2} $, $\pa({\sf T})  \vdash \left< 1 \right> A^+$ and over $\ea$
\[
\left< 1 \right>\pa({\sf T}) \vdash 
\forall A {\in} \w^{\omega 2} \left< 1 \right> A^+ \vdash 
\forall \, A {\in} \w^{\omega2} \ \exists \, m \ A_m {=} \monid \vdash \ewd^{\omega 2} .
\]


Similarly for the case of $\is_n$, from Lemmata \ref{lem: worms that IS proves} and \ref{lem: 1 con of worm proves it dies up to omega 2} we obtain that formalisably in $\ea$, for each $ A \in \w^{n} $, $\is_n   \vdash \left< 1 \right> A^+$ and $\ea  \vdash \left< 1 \right> A^+ \rightarrow \exists \, m \ A_m {=} \monid$.
Hence, as before we obtain $ \forall \, A {\in} \w^{n} \ \exists \, m \ A_m {=} \monid $, which is $ \ewd^{n} $.

\subsection{From the worm principle to 1-consistency}

Now we prove the second direction of Theorem \ref{thm:EWD omega2 equiv with 1-Con(ACA)}, proving independence of $ \ewd^{\omega 2} $.
\begin{proposition} \label{prop:EWD to ACA}\
    \begin{enumerate}
        \item $ \ea +  \ewd^{\omega 2} \vdash   1 \textnormal{-} \con(\pa ({\sf T}) ) $;
        \item $\ea + \ewd^{n+1} \vdash 1\text{-}\con(\is_n)$.
    \end{enumerate}
	
\end{proposition}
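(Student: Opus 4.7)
The approach lifts Beklemishev's classical argument for $\ewd \vdash 1\text{-}\con(\pa)$ to the transfinite modalities needed for $\pa({\sf T})$, and sharpens it to the tight finite bound $\ewd^{n+1}$ for $\is_n$. In both cases, the strategy is to reinterpret termination of worm step-down as totality of a fast-growing Hardy-style hierarchy whose $\Pi_2$ content captures $1$-consistency.

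First, for each worm $A \in \w^\Lambda$, I would extract from $\ewd^\Lambda$ a total stopping-time function $f_A(k) = \mu m.\, A_m = \top$, where $A_0 = A$ and $A_{m+1} = A_m\llbracket m+k+1\rrbracket$. Via the order-isomorphism between $\langle \w^\Lambda/{\equiv}, <_0\rangle$ and a canonical ordinal notation system (with ordinals below $\omega_n = \omega^{\omega^{\cdot^{\cdot^{\omega}}}}$ of $n$ exponentiations for $\is_n$, and below the proof-theoretic ordinal of $\aca$ for $\pa({\sf T})$), the family $\{f_A\}_A$ becomes a worm-based variant of the Hardy hierarchy indexed by these ordinals. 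The majorization result promised in the abstract is then the assertion that the \emph{standard} Hardy functions $H_\alpha$, defined using the fixed fundamental sequences in the ordinal notation, pointwise dominate these tree-ordinal variants arising from worm step-down. Hence totality of every $f_A$ implies totality of $H_\alpha$ for every $\alpha$ below the proof-theoretic ordinal.

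Next I would apply the standard Schwichtenberg--Wainer-type characterisation, combined with Lemma~\ref{lem: f^x (x) halts iff 1-con that f halts}: totality of the Hardy hierarchy up to the proof-theoretic ordinal of $T$ is equivalent over $\ea^+$ (and in turn over $\ea$ after iterating suitably) to $1\text{-}\con(T)$. For $\is_n$ this yields $1\text{-}\con(\is_n)$, and for $\pa({\sf T})$ it yields $1\text{-}\con(\pa({\sf T}))$. The main obstacle is the majorization step, which is sharp for $\is_n$: the finite modality $\ewd^{n+1}$ suffices only if tree-ordinal Hardy values are bounded tightly by standard $H_\alpha$-values at the correct level of the Wainer hierarchy, and any looser bound would require $\ewd^{n+2}$ or higher. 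Establishing this majorization with elementary resources, uniformly formalisable inside $\ea$, is the technical heart of the argument and is what distinguishes the $\ewd^{n+1}$ bound here from the weaker $\ewd^n$ implication obtained in the converse direction of Proposition~\ref{Theorem:ACAIsigOneConsistencyImpliesEWD}.
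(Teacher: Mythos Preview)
Your proposal contains a genuine gap and also misidentifies the paper's method for this direction.

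First, the majorization step is stated the wrong way round. You write that the standard Hardy functions $H_\alpha$ ``pointwise dominate these tree-ordinal variants,'' and then conclude that ``totality of every $f_A$ implies totality of $H_\alpha$.'' But domination of $f_A$ by $H_\alpha$ only lets you pass from totality of $H_\alpha$ to totality of $f_A$, not the converse. To run $\ewd \to 1\text{-}\con$ via Hardy functions you would need the \emph{reverse} bound, namely that the worm-based functions eventually majorize the standard $H_\alpha$. The majorization result alluded to in the abstract (standard Hardy dominating tree-ordinal Hardy) is precisely what the paper uses for the \emph{opposite} implication $1\text{-}\con(\is_n) \vdash \ewd^{n+1}$ in Section~5; it is of no help here.

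Second, and more fundamentally, the paper does \emph{not} prove Proposition~\ref{prop:EWD to ACA} by comparison with any standard ordinal-indexed Hardy hierarchy. The key step is Lemma~\ref{lem:h haults into 1 con of worm for ACA}, $\ea \vdash \forall A {\in} \w_1^{\omega 2}\ (h_{A1111}{\downarrow} \to \langle 1\rangle A)$, established by a direct L\"ob's-theorem argument: assuming the boxed version of the statement, one derives the unboxed one by a case split on the leftmost modality of $A1111$, using Corollary~\ref{cor: h growth} and Lemma~\ref{lem: f^x (x) halts iff 1-con that f halts} to turn $h_{1B}{\downarrow}$ into $\langle 1\rangle h_B{\downarrow}$, and the reduction property to turn $\forall k\,\langle 1\rangle\langle 1\rangle A\llbracket k\rrbracket$ into $\langle 1\rangle A$. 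From this lemma, $\ewd^{\omega 2}$ yields $\forall n\,\langle 1\rangle\langle \omega{+}n\rangle\top$ and hence $1\text{-}\con(\pa({\sf T}))$, while $\ewd^{n+1}$ yields $\forall k\,\langle 1\rangle(\langle n{+}1\rangle\top\llbracket k\rrbracket)$ and one further application of the reduction property gives $\langle 1\rangle\langle n{+}1\rangle\top$, i.e.\ $1\text{-}\con(\is_n)$. No Schwichtenberg--Wainer analysis, no ordinal notation system, and no majorization comparison enter this direction at all; those tools belong to the converse in Section~5.
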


We use a Hardy functions' analogue on worms $ h_A(m) $ defined as the smallest $ k $ such that $ A \llbracket \frto mk \rrbracket {=} \monid $, where $ A \llbracket \frto mk \rrbracket {:=} A \llbracket m\rrbracket \ldots \llbracket m{+}k \rrbracket $. Each function $h_A$ is computable and hence there is a natural $\Sigma_1$ presentation of $ h_A(m)=k $ in $ \ea $. We will use the following relation to prove monotonicity for the $ h_A $ function.

\begin{definition}
	For $ A,B \in \w^{\omega 2} $, we define the partial ordering $ B \trianglelefteq A $ iff $ B= \monid $ or $A = D \alpha C$ and $ B = \beta C$ for some $\beta \leq \alpha$. \\ For every natural number $ m $, we define $ B \trianglelefteq_m A $ iff $ B \trianglelefteq A $ and additionally, if $ B= n C $  with $n<\omega$ and, $ A = D \alpha C $ with $\alpha \geq \omega$, then $ n \leq m $.
\end{definition}

Of course, by the definition, we immediately have that if $  B \trianglelefteq_m A $ and $ m \leq n $ then $  B \trianglelefteq_n A $. Additionally, if $ A=CB $ for some $ C $ then $ B \trianglelefteq_{m} A $ for every $ m\geq 0 $. Over $ \ea $, and for worms in $ \w^{\omega 2} $, we have the following:

\begin{lemma} \label{lem: B <| A and h_A halts then we can approach B for ACA}
	If $ h_A (m) $ is defined and $ B\trianglelefteq_{m} A $, then 
	$
	\exists \, k \ A\llbracket \frto mk \rrbracket {=} B. $
\end{lemma}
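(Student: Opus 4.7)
The approach is induction on $N := h_A(m)$, which is an explicit natural number by hypothesis. In the base case $N = 0$ we have $A = \monid$, hence $B = \monid = A$. In the inductive step, $B = \monid$ is handled by taking $k = N$ and $B = A$ is trivial (zero applications), so we may assume $B \notin \{\monid, A\}$. The crux is then the one-step transfer claim: under these hypotheses, $B \trianglelefteq_{m+1} A\llbracket m \rrbracket$. Granted this, since $h_{A\llbracket m\rrbracket}(m{+}1) = N{-}1$, the inductive hypothesis yields some $k'$ with $A\llbracket m \rrbracket \llbracket \frto{m+1}{k'}\rrbracket = B$, and $k = k' + 1$ witnesses the conclusion for $A$.

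To prove the transfer claim, fix a decomposition $A = D\alpha C$ realising $B = \beta C$ with $\beta \leq \alpha$, and split on the leading modality of $A$. If $A = \la 0 \ra A'$, then $B \neq A$ forces $D$ to begin with $\la 0 \ra$, and stripping that leading $\la 0 \ra$ converts the decomposition of $A$ into one of $A\llbracket m\rrbracket = A'$ with the same $\alpha$ and $C$. In the successor case $A = \la \gamma{+}1\ra A'$, write $A' = HR$ with $H = h_{\gamma+1}(A')$ and $R = r_{\gamma+1}(A')$, so that $A\llbracket m\rrbracket = (\la \gamma \ra H)^{m+1}R$. Three subcases cover where $\alpha$ originally sits: (i) $\alpha$ is the leading $\la\gamma{+}1\ra$, so $C = A'$ and $\beta \leq \gamma$ (since $\beta \neq \gamma{+}1$), and we cut $A\llbracket m\rrbracket$ at the last $\la\gamma\ra$ of $(\la\gamma\ra H)^{m+1}$ with tail $HR = A' = C$; (ii) $\alpha$ lies inside $H$, and we cut at the same $\alpha$ in the final copy of $H$; (iii) $\alpha$ lies inside $R$, which appears verbatim as a suffix of $A\llbracket m\rrbracket$. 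In each subcase the clause $\beta \leq m$ from $\trianglelefteq_m$ (which only applies when $\beta < \omega$ and the relevant cut modality is $\geq \omega$) slackens harmlessly to $\beta \leq m{+}1$.

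The limit case $A = \la\lambda\ra A'$ runs analogously with $A\llbracket m\rrbracket = \la\lambda[m]\ra A'$: if the decomposition cuts strictly inside $A'$ we simply swap the leading modality. The only genuinely new subcase is where the decomposition cuts at the leading $\la\lambda\ra$, yielding $B = \beta A'$ with $\beta < \lambda$; by the extra clause of $\trianglelefteq_m$ we then have $\beta \leq m$, and combining this with the fundamental-sequence bound $\lambda[m] \geq m$ (which in $\w^{\omega 2}$ is immediate since the only limit is $\omega$, with $\omega[m] = m$) gives $\beta \leq \lambda[m]$, so $B \trianglelefteq_{m+1} \la\lambda[m]\ra A'$ via the decomposition with $D = \monid$.

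This last subcase is the main obstacle, as it is precisely where the numerical clause $n \leq m$ in the definition of $\trianglelefteq_m$ must be tuned to match the fundamental-sequence behaviour of the step-down at a limit; everything else is routine bookkeeping with head/remainder decompositions. Formalisation in $\ea$ causes no trouble: once $N$ is in hand, all existentials may be bounded by $N$, rendering the statement $\Delta_0$ and the induction on $N$ a $\Delta_0$-induction.
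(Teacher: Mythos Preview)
Your proof is correct and takes a genuinely different route from the paper's. The paper argues in two stages: first it uses the structural observation that the step-down function can only modify the leftmost entry of a worm, so since $h_A(m)$ is defined there is a $k_0$ with $A\llbracket \frto{m}{k_0}\rrbracket$ equal to the length-$|B|$ suffix of $A$; it then performs separate bounded inductions to lower the leading entry of that suffix down to $\beta$ (first from $\omega+l$ to $\omega$, then across the limit to a natural number $\geq m$, then down to $\beta\leq m$). Your argument instead packages everything into a single $\Delta_0$-induction on $N=h_A(m)$, driven by the one-step invariant $B\trianglelefteq_{m+1}A\llbracket m\rrbracket$. The paper's approach is shorter because the ``only the leftmost entry changes'' observation does most of the work at once, at the price of treating the successor and limit reductions of the leading entry as separate ad hoc steps. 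Your approach is more uniform and makes explicit exactly where the numerical clause $n\leq m$ in $\trianglelefteq_m$ is consumed (your final limit subcase), at the price of a longer case analysis in the transfer claim. Both formalise in $\ea$ for the same reason: once $N$ is fixed, every existential is bounded by $N$.
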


\begin{toappendix}
\begin{proof}
    
	The Definition of the step-down function $A\llbracket \cdot \rrbracket$ is such that an ordinal $ \alpha_i $ of $ A = \alpha_{\left| A\right| {-}1} \ldots \alpha_0 $ can only change if all elements to the left of it are deleted. So by the assumption of $ A\llbracket \frto ms \rrbracket {=} \monid $ there is some $k_0$ such that $A\llbracket \frto m{k_0} \rrbracket= \alpha_{\left|B\right| {-}1} \ldots \alpha_0$. We consider the case where $\alpha _{ \left| B \right| {-}1} \geq \omega $ and the corresponding ordinal $\beta_{ \left| B \right| {-}1} $ in $ B = \beta_{ \left| B \right| {-}1} \ldots \beta_0$ is $ {<} \omega $; the other cases are similar. Then by assumption of $ B\trianglelefteq_{m} A $, the ordinal $\beta_{ \left| B \right| {-}1} $ is also some $  n{\leq} m $.
	
	Let $ \alpha_{\left|B \right|{-}1 } = \omega + l $, then we can prove with $\Delta_0$-induction on $l$ bounded by $s$ that there is some $ k_1 $ such that $ A \llbracket \frto m{k_1} \rrbracket = \la \omega \ra C $ where $ B = \la n \ra C $ and $ A = D \la \omega + l \ra C $. Then $ A \llbracket \frto m{k_1 + 1} \rrbracket = \la m + k_1 + 1 \ra C $. With a second $\Delta_0$-induction bounded by $s$, we can find as before some $ k<s $ such that $ A\llbracket \frto mk \rrbracket =B  $.\\
	A more detailed proof can be found in the proofs of lemmata 9.4.3 and 6.3.3 in \cite{Papafillipou:2020:MastersThesis}.
\end{proof}

\end{toappendix}
The above can be easily expanded into the following:
\begin{corollary} \label{cor: h goes through initial segments in ACA}
	If $ h_A(n) $ is defined and $ B \trianglelefteq_n A $, then $ \forall \, m {\leq}n \ \exists \, k \ A \llbracket \frto nk \rrbracket {=} B\llbracket m \rrbracket. $
\end{corollary}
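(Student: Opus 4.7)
The plan is to apply the preceding Lemma \ref{lem: B <| A and h_A halts then we can approach B for ACA} twice, the detour being one step past $B$. Applying the Lemma once yields $k_0$ with $A\llbracket \frto{n}{k_0}\rrbracket = B$; setting $W := A\llbracket \frto{n}{k_0+1}\rrbracket$, the definition of step-down gives $W = B\llbracket n{+}k_0{+}1\rrbracket$, and $h_W(n{+}k_0{+}2)$ is defined since $h_A(n)$ is. I aim to invoke the Lemma a second time with $A$ replaced by $W$, parameter $n{+}k_0{+}2$, and target $B\llbracket m\rrbracket$.

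To do so, I would verify $B\llbracket m\rrbracket \trianglelefteq_{n+k_0+2} W$ case-by-case on the form of $B$; the key is that $B\llbracket m\rrbracket$ and $W$ share structure and differ only in a ``multiplicity parameter'' of the step-down. For $B = \monid$ or $B = \la 0\ra C$, already $W = B\llbracket m\rrbracket$, so take $k := k_0{+}1$. For $B = \la\alpha{+}1\ra B'$, writing $\hat h := h_{\alpha+1}(B')$ and $r := r_{\alpha+1}(B')$, both worms are of the form $(\la\alpha\ra \hat h)^{\ast}\, r$ (with $\ast = n{+}k_0{+}2$ for $W$ and $\ast = m{+}1$ for $B\llbracket m\rrbracket$), so $W$ decomposes as $(\la\alpha\ra \hat h)^{n+k_0+1-m}\,\la\alpha\ra\,\bigl(\hat h (\la\alpha\ra \hat h)^m\, r\bigr)$, while $B\llbracket m\rrbracket = \la\alpha\ra\,\bigl(\hat h (\la\alpha\ra \hat h)^m\, r\bigr)$ witnesses $\trianglelefteq_{n+k_0+2}$ via the common initial modality $\la\alpha\ra$. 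For $B = \la\lambda\ra C$ with $\lambda$ a limit, $B\llbracket m\rrbracket = \la\lambda[m]\ra C$ and $W = \la\lambda[n{+}k_0{+}1]\ra C$; monotonicity of the fundamental sequence gives $\lambda[m] \leq \lambda[n{+}k_0{+}1]$, witnessing $\trianglelefteq_{n+k_0+2}$ with shared suffix $C$.

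Then the second application of the Lemma produces some $k'$ with $W\llbracket \frto{n+k_0+2}{k'}\rrbracket = B\llbracket m\rrbracket$, and setting $k := k_0 + 2 + k'$ yields the desired $A\llbracket \frto{n}{k}\rrbracket = B\llbracket m\rrbracket$.

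The main obstacle is that $B\llbracket m\rrbracket$ is in general \emph{not} $\trianglelefteq_n A$ directly, so the Lemma cannot be invoked in one shot: when $B = \la\alpha{+}1\ra$ (a single modality), the tail of $A$ lacks the repeated $\la\alpha\ra$ blocks present in $B\llbracket m\rrbracket = \la\alpha\ra^{m+1}$ for $m \geq 1$, so no decomposition of $A$ witnesses $\trianglelefteq$. The detour through $W$ manufactures exactly those repeated blocks, making the hypothesis of the Lemma available on the second pass. Formalizability in $\ea$ mirrors the preceding Lemma: each witnessing $k'$ is elementarily bounded in the inputs, so $\Delta_0$-induction in place of $\Sigma_1$-induction suffices.
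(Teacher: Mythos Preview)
Your proposal is correct and follows essentially the same two-pass strategy as the paper: apply Lemma~\ref{lem: B <| A and h_A halts then we can approach B for ACA} once to reach $B$, take one further step to $W = B\llbracket n+k_0+1\rrbracket$, then apply the Lemma a second time with target $B\llbracket m\rrbracket$. The paper compresses the middle verification into the single claim that $W = C\, B\llbracket m\rrbracket$ for some $C$; your explicit case analysis on the shape of $B$ (in particular the limit case $B = \la\lambda\ra C$, where concatenation in the strict sense fails but $\trianglelefteq_{n+k_0+2}$ still holds via the shared suffix $C$ and $\lambda[m]\leq\lambda[n{+}k_0{+}1]$) spells out precisely why the second invocation of the Lemma is legitimate.
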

\begin{proof}
	By Lemma \ref{lem: B <| A and h_A halts then we can approach B for ACA}, there is $ k_1 $ such that $ A \llbracket \frto n{k_1} \rrbracket {=} B $. Then, as $k_1+1 > n \geq m$, there is some $C$ such that $ A \llbracket \frto n{k_1 {+}1} \rrbracket = C B\llbracket m \rrbracket $ and since $ h_A(n) $ halts, we can use Lemma \ref{lem: B <| A and h_A halts then we can approach B for ACA} once more to show that there is some $ k_2 $ such that $ A \llbracket \frto n {k_1{+}k_2} \rrbracket {=} B\llbracket m \rrbracket $.
\end{proof}
Using this result, we have the following monotonicity statement:
\begin{lemma} \label{lem: h is order preserving over the new order and natural ordering for ACA}
	If $ h_A(y) $ is defined, $ B \trianglelefteq_y A $ and $ x \leq y $, then $ h_B(x) $ is defined and $ h_B (x) \leq h_A(y). $
\end{lemma}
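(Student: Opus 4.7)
The plan is to proceed by external induction on $h_A(y)$, using as the main ingredient a monotonicity property of the step-down function in its numerical parameter. The whole argument will be formalized inside $\ea$ by replacing the external induction with a bounded $\Delta_0$-induction on a $\Sigma_1$-witness for $h_A(y)$, in the same style as in the proof of Lemma~\ref{lem:provable reduction by next for ACA}.

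The first step is to establish the auxiliary claim: for any worm $A$ and naturals $x \leq y$, $A\llbracket x\rrbracket \trianglelefteq_{y+1} A\llbracket y\rrbracket$. I verify this by case analysis on the shape of $A$. If $A = \top$ or $A$ starts with $\la 0\ra$, the step-down does not depend on the parameter and both sides coincide. If $A = \la\alpha{+}1\ra C$, then $A\llbracket k\rrbracket = (c(h(A)))^{k+1}\,\conca\, r(A)$, so $A\llbracket y\rrbracket = (c(h(A)))^{y-x}\,\conca\, A\llbracket x\rrbracket$; in particular the two share the same initial modality $\alpha$ and $A\llbracket x\rrbracket$ is a suffix of $A\llbracket y\rrbracket$. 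If $A$ begins with a limit $\la\lambda\ra$, monotonicity of the fundamental sequence gives $\lambda[x] \leq \lambda[y]$, and the refined $_{y+1}$-bound follows from the specific shape of the fundamental sequences in the range of interest: in $\w^{\omega 2}$ the only limit is $\omega$ with $\omega[k] = k$, and in $\w^{n+1}$ no limit appears at all.

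The main induction on $N := h_A(y)$ then breaks into two cases. In the base $N = 0$, $A$ must be $\top$, forcing $B = \top$ and hence $h_B(x) = 0$. For the inductive step, if $B = A$ we apply the auxiliary claim and invoke the induction hypothesis on $(A\llbracket y\rrbracket, A\llbracket x\rrbracket, x{+}1, y{+}1)$ --- valid since $h_{A\llbracket y\rrbracket}(y{+}1) = N{-}1$ --- obtaining $h_{A\llbracket x\rrbracket}(x{+}1) \leq N{-}1$ and hence $h_A(x) \leq N$. If instead $B \triangleleft A$ strictly, Lemma~\ref{lem: B <| A and h_A halts then we can approach B for ACA} produces $k_0$ with $A\llbracket\frto{y}{k_0}\rrbracket = B$, and continuing $A$'s process past $B$ yields $h_A(y) = k_0 + 1 + h_B(y{+}k_0{+}1)$. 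In particular $h_B(y{+}k_0{+}1) < N$, so applying the induction hypothesis to $(B, B, x, y{+}k_0{+}1)$ --- which falls into the previous sub-case --- gives $h_B(x) \leq h_B(y{+}k_0{+}1) \leq N$.

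The hard part will be the auxiliary claim in its $\trianglelefteq_{y+1}$-refined form at a limit step, where one must guarantee that every finite modality produced by the step-down is bounded by $y{+}1$; this is where the specific shape of the available fundamental sequences is used, and restricting to $\w^{\omega 2}$ is what makes it painless. The secondary subtlety is the formalization in $\ea$: since ``$h_A(y) = n$'' is only $\Sigma_1$, the external induction on $N$ must be implemented as a $\Delta_0$-induction bounded by a $\Sigma_1$-witness for $h_A(y)$, exactly as in the trick already deployed in Lemma~\ref{lem:provable reduction by next for ACA}.
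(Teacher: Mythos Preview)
Your argument is correct, with one small oversight in the base case: $h_A(y)=0$ only says $A\llbracket y\rrbracket=\top$, so $A\in\{\top,\la 0\ra\top\}$ rather than necessarily $A=\top$. This is harmless, since in either case $B\trianglelefteq_y A$ forces $B\in\{\top,\la 0\ra\top\}$ and hence $h_B(x)=0$.

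The paper, however, takes a different and shorter route. It does not prove your auxiliary claim $A\llbracket x\rrbracket\trianglelefteq_{y+1}A\llbracket y\rrbracket$ and does not induct on $h_A(y)$. Instead it repeatedly applies Corollary~\ref{cor: h goes through initial segments in ACA} (which you never invoke) to embed the entire $B$-computation as a subsequence of the $A$-computation: one obtains indices $s_0,\,s_0+s_1,\,\ldots$ with $A\llbracket\frto y{s_0+\cdots+s_i}\rrbracket=B\llbracket\frto x i\rrbracket$ for every $i$, and since the partial sums grow at least linearly in $i$ while the $A$-sequence reaches $\top$ in $h_A(y)$ steps, so must the embedded $B$-sequence. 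Your proof instead unwinds recursively, handling $B=A$ by one parallel step-down (via the auxiliary claim) and $B\neq A$ by first jumping to $B$ inside the $A$-sequence via Lemma~\ref{lem: B <| A and h_A halts then we can approach B for ACA}, then reducing to the diagonal case. The paper's route is more economical because the Corollary has already absorbed the limit-modality case analysis that you carry out by hand; your route, on the other hand, isolates the pointwise monotonicity $A\llbracket x\rrbracket\trianglelefteq_{y+1}A\llbracket y\rrbracket$ as a standalone fact and makes the $\ea$-formalisation (bounded induction along the fixed witness for $h_A(y)$) slightly more explicit.
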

\begin{proof}
	By applying Corollary \ref{cor: h goes through initial segments in ACA} several times, we obtain $ s_0,s_1,\ldots $ such that
 $A\llbracket \frto y {s_0} \rrbracket  =  B \llbracket x \rrbracket$,  where $ y + s_0 \geq x$,
	$A\llbracket \frto  y{s_0 +s_1} \rrbracket  =  B \llbracket x \rrbracket \llbracket x+1 \rrbracket $,  where $y + s_0 +s_1 \geq x+1$, etc.
	Hence all elements of the sequence starting with $ B $ occur in the sequence for $ A $ and since $ h_A(y) $ is defined, so is $ h_B(x) $.
\end{proof}

Next we look into some results that bound the functions $h_A$ from below and compare them with some fast growing functions.

\begin{lemma} \label{lem: h breaking B0A into functional composition}
	For every $ A, B \in \w^{\omega 2} $, if $ h_{B0A}(n) $ is defined, then
	\[ h_{B0A}(n) =  h_A\big (n+h_B(n) +2 \big ) + h_B(n) +1 > h_A \big (h_B (n) \big ). \]
\end{lemma}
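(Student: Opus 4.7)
The plan is to analyze the evolution of $B\,0\,A$ under $\llbracket\cdot\rrbracket$ in three phases, by first showing that the separating $\langle 0\rangle$ acts as an impenetrable barrier for $\llbracket k\rrbracket$ whenever the left factor is nonempty, and then carefully bookkeeping the transition after $B$ has been exhausted.

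The core technical ingredient is the commutation identity
\[
(B\,0\,A)\llbracket k\rrbracket \;=\; B\llbracket k\rrbracket \, 0\, A,\qquad B\neq\monid,
\]
which I would prove by case analysis on the leading modality of $B$. If $B$ begins with $\langle 0\rangle$, both sides equal the chop of the leading $\langle 0\rangle$. If $B=\langle\alpha{+}1\rangle B'$, the key point is that the head operator $h_{\alpha+1}$ cannot cross the separating $\langle 0\rangle$ since $0<\alpha+1$, so $h(B\,0\,A)=h(B)$ and $r(B\,0\,A)=r(B)\,0\,A$; then the successor step-down rule $X\llbracket k\rrbracket=(c(h(X)))^{k+1}\,r(X)$ distributes across the concatenation. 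If $B=\langle\lambda\rangle B'$ with $\lambda$ a limit, only the leading modality is modified to $\langle\lambda[k]\rangle$, so the factoring is trivially preserved. Because the entire verification is a bounded symbolic manipulation, it is straightforwardly formalizable in $\ea$.

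Having the commutation in hand, the plan is to iterate by $\Delta_0$-induction on $j$, bounded by $h_{B0A}(n)$: as long as $j\leq h_B(n)$, the minimality of $h_B(n)$ guarantees $B\llbracket\frto{n}{j-1}\rrbracket\neq\monid$, so the commutation applies at each step and yields
\[
(B\,0\,A)\llbracket\frto{n}{j}\rrbracket \;=\; B\llbracket\frto{n}{j}\rrbracket\,0\,A.
\]
At $j=h_B(n)$ the first factor collapses to $\monid$, leaving $(B\,0\,A)\llbracket\frto{n}{h_B(n)}\rrbracket = 0\,A$; the next application of $\llbracket n+h_B(n){+}1\rrbracket$ is a chop sending $0\,A$ to $A$, after which all further step-downs act purely on $A$ reindexed by $n+h_B(n)+2$. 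Invoking the definition of $h_A$ on this final phase and summing the indices used across the three phases yields the announced equation.

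The strict inequality $h_A(n+h_B(n)+2)+h_B(n)+1 > h_A(h_B(n))$ then follows from the monotonicity of $h_A$ in its index argument established in Lemma \ref{lem: h is order preserving over the new order and natural ordering for ACA}: since $A\trianglelefteq_{n+h_B(n)+2}A$ holds trivially (taking $D=\monid$ in the definition of $\trianglelefteq$) and $h_B(n)\leq n+h_B(n)+2$, we get $h_A(h_B(n))\leq h_A(n+h_B(n)+2)$, and adding the positive quantity $h_B(n)+1\geq 1$ gives the strict comparison. The main obstacle I anticipate is the delicate index bookkeeping across the $B$-phase, the intermediate $0\,A$ state, and the $A$-phase; once the commutation lemma is written down, however, everything else is a routine application of the definition of $h_A$.
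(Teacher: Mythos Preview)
Your proposal is correct and follows essentially the same approach as the paper. The paper's proof is much terser: it simply asserts that ``$B0A$ first rewrites itself to $0A$ in $h_B(n)$ steps'' (your commutation identity stated without proof), then invokes Lemma~\ref{lem: B <| A and h_A halts then we can approach B for ACA} with $0A \trianglelefteq_0 B0A$ to secure the definedness of $h_B(n)$, and finally uses Lemma~\ref{lem: h is order preserving over the new order and natural ordering for ACA} for the monotonicity step exactly as you do. One small remark: your phrasing ``as long as $j\leq h_B(n)$, the minimality of $h_B(n)$ guarantees\ldots'' presupposes that $h_B(n)$ is already defined; you should either argue this first (as the paper does via $\trianglelefteq_0$) or note that your commutation identity forces $(B0A)\llbracket\frto{n}{j}\rrbracket \neq \monid$ whenever $B\llbracket\frto{n}{j}\rrbracket \neq \monid$, so termination of the $B0A$-sequence implies termination of the $B$-sequence.
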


\begin{proof}
	Since $ 0A \trianglelefteq_0 B0A $, by lemma \ref{lem: B <| A and h_A halts then we can approach B for ACA} we have that $ h_B (n) $ is defined. As $ B0A $ first rewrites itself to $ 0A $ in $ h_B(n) $ steps and then begins to rewrite $ A $ into $ \monid $ at step $ n+h_B(n) +2 $, we have that $ h_A (n+h_B(n) +2) $ is then defined. Finally, by Lemma \ref{lem: h is order preserving over the new order and natural ordering for ACA}, $ h_A(h_B(n)) \leq h_A (n+h_B(n) +2) $ and it is also defined.
\end{proof}
Seeing how easy it is to achieve a lower bound based on the composition of functions, we can proceed by trying to get in-series iterations of this. Since the $ h_A $ functions are in general strictly monotonous, we will be getting faster and faster growing functions by following this method.
\begin{corollary}\label{cor: h growth}
	If $ A \in \w_1 ^{\omega 2} $ and $ h_{1A} (n) $ is defined, then $ h_{1A} (n) > h_A ^{(n)}(n). $
\end{corollary}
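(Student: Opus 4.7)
I would begin by unfolding $1A\llbracket n\rrbracket$. Since every modality of $A$ is $\geq 1$, we have $h_1(1A)=1A$ and $r(1A)=\top$, and hence by the definition of the step-down operator $1A\llbracket n\rrbracket=(c(1A))^{n+1}=(0A)^{n+1}$. Therefore
\[
h_{1A}(n)=1+h_{(0A)^{n+1}}(n+1),
\]
and the task reduces to bounding the second summand from below by $h_A^{(n)}(n)$.

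The main step is an induction on $k\geq 1$ establishing that $h_{(0A)^k}(m)\geq h_A^{(k)}(m)+1$ whenever $h_{(0A)^k}(m)$ is defined. The base $k=1$ is immediate from Lemma \ref{lem: h breaking B0A into functional composition} with $B=\top$, which yields $h_{0A}(m)=h_A(m+2)+1$, combined with monotonicity of $h_A$ (Lemma \ref{lem: h is order preserving over the new order and natural ordering for ACA}). For the inductive step, parsing $(0A)^{k+1}$ as $(0A)^k\cdot 0\cdot A$ and applying Lemma \ref{lem: h breaking B0A into functional composition} gives $h_{(0A)^{k+1}}(m)>h_A(h_{(0A)^k}(m))$; the inductive hypothesis together with monotonicity then gives $h_A(h_{(0A)^k}(m))\geq h_A(h_A^{(k)}(m))=h_A^{(k+1)}(m)$. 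Instantiating at $k=m=n+1$ produces $h_{1A}(n)>h_A^{(n+1)}(n+1)$.

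To finish I would show that $h_A^{(n+1)}(n+1)\geq h_A^{(n)}(n)$. If $A=\top$ the corollary is immediate, since $h_{1\top}(n)=h_1(n)=n+1$ while $h_\top^{(n)}(n)=0$. Otherwise, rewriting $h_A^{(n+1)}(n+1)=h_A^{(n)}(h_A(n+1))$ and appealing to monotonicity of $h_A^{(n)}$ in its argument, it suffices to establish the auxiliary size bound $h_A(n+1)\geq n+1$ for $A\in\w_1^{\omega 2}\setminus\{\top\}$. This follows because $A\llbracket n+1\rrbracket=(c(h(A)))^{n+2}\cdot r(A)$ has length at least $n+2$, while the step-down operator decreases the length of a worm by at most one per step (only the chop of a leading $0$ shortens it), so at least $n+1$ further reductions are required to reach $\top$. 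I expect this last size bound to be the main technical nuisance, particularly in its formalisation inside $\ea$; all the remaining estimates are immediate consequences of the lemmas already established in the paper.
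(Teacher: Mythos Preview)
Your approach is essentially that of the paper: unfold $(1A)\llbracket n\rrbracket = (0A)^{n+1}$ and induct on the number of $0A$-blocks using Lemma~\ref{lem: h breaking B0A into functional composition}. The paper's proof is a one-line pointer to exactly this induction, so your write-up is a faithful expansion of it.

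There is one small oversight in your final step. The identity $A\llbracket n+1\rrbracket = (c(h(A)))^{n+2}\, r(A)$ only holds when the leading modality of $A$ is a successor ordinal; if $A$ begins with $\omega$ (the sole limit in $\w_1^{\omega 2}$), the step-down replaces $\omega$ by $\omega[n+1]$ without increasing the length, so your length bound does not apply directly. The fix is immediate: either note that one further step (now from a successor leading modality) does produce a worm of length $\geq n+3$, or bypass lengths altogether and observe that $1 \trianglelefteq_{n+1} A$ for any nonempty $A \in \w_1^{\omega 2}$, whence Lemma~\ref{lem: h is order preserving over the new order and natural ordering for ACA} gives $h_A(n+1) \geq h_1(n+1) = n+2 > n$, which is all you need for $h_A^{(n+1)}(n+1) \geq h_A^{(n)}(n)$.
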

\begin{proof}
	Since $ (1A)\llbracket n \rrbracket {=} (0A)^{n+1}       $, we can perform induction on the number of in-series concatenations of $ 0A $ by applying Lemma \ref{lem: h breaking B0A into functional composition}.
\end{proof}

As an application of this, we can see how quickly we reach superexponential growth.

\begin{corollary} \label{cor: h and superexponentiation}
	If $ h_{1111}(n) $ is defined then, $ h_{1111}(n) > 2^n _n $ and $ h_{111}(n) > 2^n $.
\end{corollary}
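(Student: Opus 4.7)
The plan is to iterate Corollary \ref{cor: h growth} three times, bootstrapping from a direct computation of $h_1$ and then amplifying the bound at each stage.

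First I would compute $h_1(m)$ from the step-down definition. Since $1\llbracket m\rrbracket = (c(h(1)))^{m+1}\cdot r(1) = 0^{m+1}$, and every subsequent step-down chops off one leading zero (the $A = 0B$ clause of the step-down), the sequence starting at parameter $m$ reaches $\top$ after exactly $m+1$ further steps; hence $h_1(m) = m+1$. A straightforward external induction then yields $h_1^{(n)}(n) = 2n$, so applying Corollary \ref{cor: h growth} with $A = 1$ gives $h_{11}(n) > 2n$ at every point where $h_{11}$ is defined.

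Next I would iterate. By the monotonicity provided by Lemma \ref{lem: h is order preserving over the new order and natural ordering for ACA} (applied with $B = A$), the inequality $h_{11}(x) > 2x$ is preserved under composition, so by induction $h_{11}^{(k)}(x) > 2^k x$. Applying Corollary \ref{cor: h growth} with $A = 11$ therefore yields
\[ h_{111}(n) > h_{11}^{(n)}(n) > 2^n \cdot n \geq 2^n, \]
which is the second claim. One more application with $A = 111$ gives $h_{1111}(n) > h_{111}^{(n)}(n)$. Combining with $h_{111}(x) > 2^x$ and using monotonicity as before, an external induction on $k$ shows $h_{111}^{(k)}(n) > 2^{n}_{k}$, so $h_{111}^{(n)}(n) > 2^{n}_{n}$ and the first claim follows.

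The main technical obstacle is ensuring that each intermediate $h_A$ is defined at all the arguments that occur inside the iterated compositions, since the $h$-values are \emph{a priori} only partial. Because $h_{1111}(n)$ is assumed defined and each of $111, 11, 1$ arises as a $\trianglelefteq_n$-initial segment of $1111$ (indeed $1111 = D \conca B$ for suitable $D$, and all modalities involved are $<\omega$), Lemma \ref{lem: h is order preserving over the new order and natural ordering for ACA} propagates definedness downwards through the tower. All of the inductions used are external, and, when formalized, each becomes a $\Delta_0$-induction bounded by the corresponding value $h_{111}(n), h_{11}(n), \ldots$, so the entire argument is routinely carried out in $\ea$.
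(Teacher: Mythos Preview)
Your proof is correct and follows essentially the same approach as the paper: compute $h_1(m)=m+1$ directly, then apply Corollary~\ref{cor: h growth} three times to bootstrap through $h_{11}(n)>2n$, $h_{111}(n)>2^n$, and $h_{1111}(n)>2^n_n$. Your version is more explicit about the definedness of the intermediate values and the role of monotonicity (Lemma~\ref{lem: h is order preserving over the new order and natural ordering for ACA}) in propagating the bounds through the compositions, but the underlying argument is the same.
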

\begin{proof}
	We will make use of Corollary \ref{cor: h growth} multiple times. Clearly we first have that $ h_{1111}(n) > h^{(n)} _{111}(n) $, then $ h_{111} (n) > h^{(n)}_{11}(n) $ and $ h_{11}(n) > h^{(n)} _{1} (n) $. We can easily prove by induction in $ \ea $ that $ h_1 (n) = n+1 $. So by applying the compositions, $ h_{11}(n) > 2n $ and so $ h_{111}(n) > 2^n $ and finally $ h_{1111}(n) > 2^n _n $.
\end{proof}

At this point we find ourselves equipped to tackle the main lemma on which the proof of this direction rests. Due to the complexity added by the limit ordinal $\omega$, there is a technical addition in this proof when compared to the corresponding proof for $\pa$ in \cite{Beklemishev:2005:Survey}.
\begin{lemma} \label{lem:h haults into 1 con of worm for ACA}
	$ \ea  \vdash \forall \, A {\in} \w ^{\omega 2} _1 \ (h_{A1111}{\downarrow} \ \rightarrow \left< 1 \right> A). $
\end{lemma}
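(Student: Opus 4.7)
The plan is to derive $\langle 1\rangle A$ in two stages: first upgrade the ambient theory to $\ea^+$ using the $1111$-tail, then reduce $\langle 1\rangle A$ to smaller worms by the Reduction Property and close the induction via L\"ob's theorem for $[1]$.

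\emph{Stage 1.} I observe that $1111 \trianglelefteq_n A1111$ for every $n$: match the rightmost $1111$-block of $A1111$ with $1111$ itself, taking $C = 111$ and $\beta = \alpha = 1$; since $\alpha < \omega$ the side-condition in the definition of $\trianglelefteq_n$ is vacuous. Lemma~\ref{lem: h is order preserving over the new order and natural ordering for ACA} then transfers totality, giving $h_{1111}\!\downarrow$. Corollary~\ref{cor: h and superexponentiation} yields totality of super-exponentiation, and by Corollary~\ref{cor: ea^+ is equiv to 1-con(ea)} this is $\ea^+$, i.e.\ $\langle 1\rangle\top$. From here on I reason in $\ea^+$.

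\emph{Stage 2.} By the Reduction Property (the Corollary after Theorem~\ref{thm: generalized succ reduction property}) together with the unlabelled Lemma preceding Corollary~\ref{cor: relationship between double bracket and double angle in GLP} that rewrites $Q^\alpha_k(B)$ as $(\alpha\,h_{\alpha+1}(B))^{k+1}\,r_{\alpha+1}(B)$ -- precisely $A\llbracket k\rrbracket$ when $A = \langle \alpha+1\rangle B$ -- the successor and limit cases collapse to the single equivalence
\[
\langle 1\rangle A \;\leftrightarrow\; \forall k\,\langle 1\rangle A\llbracket k\rrbracket \qquad (A\neq\top).
\]
Each $A\llbracket k\rrbracket$ is strictly $<_0$-smaller than $A$ by Corollary~\ref{cor: relationship between double bracket and double angle in GLP}, so deriving $\langle 1\rangle A$ is recursively reduced to smaller instances. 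I close the recursion by L\"ob's theorem for $[1]$ applied to the formalised statement $\psi := \forall A\,(h_{A1111}\!\downarrow \to \langle 1\rangle A)$, noting that $\psi$ is $\Pi_2$ in the extended language. The L\"ob step is furnished by the Stage~2 equivalence provided we have the key auxiliary
\[
h_{A1111}\!\downarrow \;\;\Longrightarrow\;\; h_{(A\llbracket k\rrbracket)\cdot 1111}\!\downarrow \quad \text{for every } k, \qquad (\ast)
\]
which lets us invoke the inductive hypothesis at each $A\llbracket k\rrbracket$; together with $\Pi_2$-reflection available from Stage~1, the argument then delivers $\psi$, and instantiation at our fixed $A$ concludes.

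\emph{Main obstacle.} The crux is verifying $(\ast)$. The step-down does not commute with appending $1111$: reducing $A1111$ intermixes the $1111$-tail with the expanded heads of $A$, so $(A1111)\llbracket k\rrbracket \neq (A\llbracket k\rrbracket)\cdot 1111$ in general. My plan is to trace the reduction of $A1111$ for a bounded further stretch past parameter $k$ and to identify a later worm $W$ along it satisfying $(A\llbracket k\rrbracket)\cdot 1111 \trianglelefteq_j W$ for a suitable $j \geq k$; Lemmata~\ref{lem: B <| A and h_A halts then we can approach B for ACA} and~\ref{lem: h is order preserving over the new order and natural ordering for ACA} then transfer termination to $(A\llbracket k\rrbracket)\cdot 1111$. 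A secondary but nontrivial concern is to keep the L\"ob formalisation within $\Delta_0$-induction by bounding the relevant witnesses and proof sizes elementarily in $A$ and the parameter, exactly as in the proof of Lemma~\ref{lem:provable reduction by next for ACA}.
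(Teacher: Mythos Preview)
There is a genuine gap in Stage~2, at the point where you ``invoke the inductive hypothesis''. The L\"ob hypothesis lives under a box: after necessitation and monotonicity you have
\[
\forall B\in\W^{\omega 2}_1\ [1]\bigl(h_{B1111}{\downarrow}\ \to\ \langle 1\rangle B\bigr),
\]
so to extract $\langle 1\rangle\langle 1\rangle A\llbracket k\rrbracket$ (and hence $\langle 1\rangle A\llbracket k\rrbracket$) you need the premise $h_{(A\llbracket k\rrbracket)1111}{\downarrow}$ \emph{under} $\langle 1\rangle$, not merely at the outer level. Your auxiliary $(\ast)$ only produces it at the outer level, and the $\Pi_2$-reflection $\langle 1\rangle\top$ obtained in Stage~1 goes the wrong way: it passes from provability to truth, not from truth into a $\langle 1\rangle$-context. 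Since $h_{\,\cdot\,}{\downarrow}$ is genuinely $\Pi_2$, there is no provable completeness principle that would let you push it under $\langle 1\rangle$.

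The paper closes exactly this gap with Lemma~\ref{lem: f^x (x) halts iff 1-con that f halts}: over $\ea$, totality of $\lambda x.\,f^{(x)}(x)$ is equivalent to $\langle 1\rangle(f{\downarrow})$. The structural purpose of the leading modality (and of the $1111$ tail) is to manufacture such an iterate: from $h_{1C}{\downarrow}$ one gets $\lambda x.\,h_C^{(x)}(x){\downarrow}$ via Corollary~\ref{cor: h growth}, whence $\langle 1\rangle(h_C{\downarrow})$, and only then can the L\"ob hypothesis fire. The paper accordingly splits on the first letter of $A1111$. When $A=1B$ this gives $\langle 1\rangle(h_{B1111}{\downarrow})$ directly. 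When $A$ starts with $\alpha>1$ one first shows $h_{C\llbracket n\rrbracket}{\downarrow}$ for all $n$, then the combinatorial fact $1(C\llbracket n\rrbracket)\trianglelefteq_1 C\llbracket n{+}1\rrbracket$ (or its two-step variant at $\alpha=\omega$) yields $h_{1(C\llbracket n\rrbracket)}{\downarrow}$, and the same iterate-to-$\langle 1\rangle$ trick applies; only after obtaining $\forall n\,\langle 1\rangle\langle 1\rangle A\llbracket n\rrbracket$ does the Reduction Property finish the job. Your plan has the Reduction Property and $(\ast)$ but is missing this iterate-to-$\langle 1\rangle$ step, without which the L\"ob loop does not close.

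A secondary issue: your uniform equivalence $\langle 1\rangle A\leftrightarrow\forall k\,\langle 1\rangle A\llbracket k\rrbracket$ via the Reduction Property does not cover $A=1B$. The successor form of the Corollary after Theorem~\ref{thm: generalized succ reduction property} requires $\beta\leq\alpha$ with $\alpha{+}1$ the leading modality; for a leading $1$ this forces $\alpha=0<\beta=1$, so it does not apply, and indeed $A\llbracket k\rrbracket$ then begins with $0$ and falls outside $\W^{\omega 2}_1$. This is why the paper treats the case $A=1B$ separately.
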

\begin{toappendix}

\begin{proof}
	By Löb's Theorem, this is equivalent to proving

\begin{equation}\label{equation:NiceLoebApplication}\ea  \vdash \Box \big( \forall \, A {\in} \w _1 ^{\omega 2} \ (h_{A1111}{\downarrow} \ \rightarrow \left< 1 \right> A) \big) \rightarrow  \forall \, A {\in} \w _1 ^{\omega 2} \ (h_{A1111}{\downarrow} \ \rightarrow \left< 1 \right> A).
\end{equation}
	We reason in $ \ea  $. Let us take the antecedent of \eqref{equation:NiceLoebApplication} as an additional assumption, which by the monotonicity axiom of $ \glp_{\omega } $ interpreted in $ \ea $, implies $ [1] \big( \forall \, A {\in} \w _1 ^{\omega 2} \ (h_{A1111}{\downarrow} \ \rightarrow \left< 1 \right> A) \big) $. This in turn implies:
	\begin{align} \label{assum: h halts to 1 consistency for ACA}
	\forall \, A {\in} \w^{\omega 2} _1 \ [1](h_{A1111 }{\downarrow} \ \rightarrow \left< 1 \right> A).
	\end{align}
	
	We make a case distinction on whether $A1111$ starts with a 1 or with an ordinal strictly larger than 1.\\ 
	If $ A1111 = 1B $ then by Corollary \ref{cor: h growth}, we have $ h_{1B} {\downarrow} \ \rightarrow \lambda x.h_B ^{(x)}(x) {\downarrow}. $	
	The function $ h_B $ is increasing, has an elementary graph and grows at least exponentially as per Corollary \ref{cor: h and superexponentiation}, $ h_{111} > 2^x $. So for $ A{=} \monid $ we have that $ h_{1111}{\downarrow} $ implies the totality of $ 2^x_n $ and hence $ \ea^+ $, which by Corollary \ref{cor: ea^+ is equiv to 1-con(ea)}, implies $ {\left< 1 \right>} \top $.\footnote{Since over $\ea$, it is provable that $\ea +\utb$ is conservative over $\ea$, by their definition, $\la 1 \ra_{\ea} \top$ and $\la 1 \ra_{\ea + \utb} \top$ are equivalent.} If $ A $ is nonempty, we reason as follows:
	\begin{align*}
	\lambda x . h_B ^{(x)} {\downarrow} & \vdash \left< 1 \right> h_B {\downarrow}, \ \ \ \text{ by Lemma \ref{lem: f^x (x) halts iff 1-con that f halts}}\\ 
	& \vdash \left< 1 \right> \left< 1 \right> B, \ \ \ \text{ by Assumption } \eqref{assum: h halts to 1 consistency for ACA}\\
	& \vdash \left< 1 \right> A. 
	\end{align*}
	If $ A1111 {=} C $ starts with $ \alpha > 1 $, we have $h_C {\downarrow}   \vdash \lambda x. h_{C \llbracket x \rrbracket } (x+1) {\downarrow} \vdash \forall \, n \ h_{C\llbracket n \rrbracket} {\downarrow}$.
	The last implication is derived by application of Lemma \ref{lem: h is order preserving over the new order and natural ordering for ACA} as for arbitrary $ n $, if $ x \leq n $ then $ h_{C\llbracket n \rrbracket }(x) \leq h_{C\llbracket n \rrbracket }(n+1) $ and if $ n \leq x $ then $ h_{C\llbracket n \rrbracket }(x) \leq h_{C\llbracket x \rrbracket }(x+1) $. In both cases, the larger value is defined.\\
	We can perform this line of argument a second time, something we will use for the case that $\alpha=\omega$, obtaining
\[\forall \, n \ h_{C\llbracket n \rrbracket} {\downarrow} \vdash \forall \, n \ \lambda x. h_{C \llbracket n \rrbracket \llbracket x \rrbracket } (x+1) {\downarrow} \vdash \forall \, n \ h_{C\llbracket n \rrbracket \llbracket n+1 \rrbracket} {\downarrow}.\]

	Now notice that no matter what the $ \alpha $ is, we will always have that either $ 1(C\llbracket n\rrbracket ) \trianglelefteq_1 C \llbracket n+1 \rrbracket $ or $ 1(C\llbracket n\rrbracket) \trianglelefteq_1 C \llbracket n+1 \rrbracket \llbracket n+2 \rrbracket $. To prove this, let $ D $ be such that $ C = \alpha D 1111 $. \\
	If $ \alpha = \omega $, then $ 1(C \llbracket n \rrbracket) = 1nD1111 $ and $ C\llbracket n{+}1 \rrbracket = \la n+1 \ra D1111 $ therefore $ C \llbracket n{+}1 \rrbracket \llbracket n{+}2 \rrbracket =  (n h_{n+1}(D1111))^{n+3} r_{n+1}(D1111) = (n h_{n+1}(D1111))^{n+2} n D1111 $. So if $ n = 0 $ then since $ D \in \w_1 ^{\omega 2} $, we have that $ r_{1}(D1111) = \monid $ and therefore,
	\[ C \llbracket n+1 \rrbracket \llbracket n+2 \rrbracket =   (0 D1111)^{0+2} 0 D1111 = 0 D11110 D111 10 D1111 = 0 D11110 D111 1 C \llbracket n \rrbracket .\]
	If $ n >0 $ then clearly $ n h_{n+1}(D1111) $ has as its rightmost element something $ \geq 1 $ and so $ 1(C\llbracket n\rrbracket )\trianglelefteq_1 C \llbracket n+1 \rrbracket \llbracket n+2 \rrbracket $. \\ 
	If $ \alpha \neq \omega $ then, $ 1 (C \llbracket n \rrbracket) = 1(\la \alpha -1 \ra h_{\alpha}(D1111))^{n+1} r_\alpha (D 1111) $ and
	\[  C \llbracket n{+}1 \rrbracket = (\la \alpha -1 \ra h_{\alpha}(D1111))^{n+2} r_\alpha (D 1111).\]
	So $ 1(C\llbracket n\rrbracket) \trianglelefteq_1 C \llbracket n+1 \rrbracket $.
	Therefore  we have:
	\begin{align*}
	\forall \, n \ h_{C} {\downarrow} & \vdash \forall \, n \ h_{1(C\llbracket n \rrbracket) } {\downarrow} \ \ \ \ (\text{by the above})\\
	& \vdash \forall \, n \ \lambda x . h_{C\llbracket n\rrbracket}^{(x)}(x) {\downarrow}\\
	& \vdash \forall \, n \ \left< 1 \right> h_{C\llbracket n\rrbracket} {\downarrow} \ \ \ \ (\text{by 	Lemma \ref{lem: f^x (x) halts iff 1-con that f halts}}).
	\end{align*}
	Again observe that since A starts with something bigger than 1, we have $C\llbracket n\rrbracket = A\llbracket n\rrbracket 1111$, hence we can apply our assumption. Hence the argument continues,
	\begin{align*}
	& \vdash \forall \, n \left< 1 \right>(\left< 1 \right> A \llbracket n\rrbracket ) \ \ \ \text{ by Assumption } \eqref{assum: h halts to 1 consistency for ACA}\\
	& \vdash \left< 1 \right> A \ \ \ (\text{by the reduction property}). 
	\end{align*}
	The last step is achieved because $ h_C {\downarrow} $ implies $ h_{1111} {\downarrow} $ which, as per our first step in this proof, implies $ \ea^+ $, hence allowing the use of the reduction property.
\end{proof}

\end{toappendix}

Now to prove Proposition \ref{prop:EWD to ACA} assume that $ \ewd^{\omega 2} $ holds. In $   \ea + \utb$ we have that
\[ \forall \, A {\in} \w^{\omega 2} \ \exists \, m \ A_m {=} \monid \vdash \forall \, A {\in} \w_{1}^{\omega 2} \ h_A {\downarrow}\vdash \forall \, n \ \left< 1 \right> {\left< \omega + n \right>} \top \\
    \vdash 1\text{-}\con(\pa({\sf T})).
\]
The first implication holds since for every worm $ A $ and every number $ x $, there is a worm $ A' = 0^x A $ where $ A'\llbracket \frto 0 {x-1} \rrbracket = A $ hence $ \exists \, m \ A'_m {=} \monid $ iff $ h_A(x) $ is defined.

As for the case of $\ewd^{n+1}$, assume that $ \ewd^{n+1} $ holds. We have in $\ea$ that
\begin{align*}
	\forall \, A {\in} \w^{n+1} \ \exists \, m \ A_m {=} \monid & \vdash \forall \, A {\in} \w_{1}^{n+1} \ h_A {\downarrow}\\
	& \vdash \forall \, k \ \left< 1 \right> \big({\left< n+1 \right>} \top \llbracket k \rrbracket\big), \ \ \text{ by Lemma \ref{lem:h haults into 1 con of worm for ACA}}\\
	& \vdash {\left< 1 \right> \left< n+1 \right>} \top \ \ \ \ (\text{by the reduction property})\\
	& \vdash 1\text{-}\con(\is_n).
\end{align*}
For the use of the reduction property, notice that here $ {\left< 1 \right> \left< n+1 \right>} \top \rightarrow {\left< 1 \right>} \top $ which in turn implies $ \ea^+ $.

\section{Worm battles below $\pa$}

In this section, we will prove that $\ea + 1\text{-}\con(\is_n) \vdash \ewd^{n+1}$ in Theorem \ref{theorem:fineStructureForWorms} below. In order to prove this, we will need to develop some technicalities involving so-called Hardy functions.

\subsection{Hardy functions}\label{subsect: Hardy functions}
The so-called \textit{Hardy functions} are functions on the natural numbers that are indexed by the ordinals. The collection of Hardy functions that are provably total within a theory reflect much of the proof-theoretical properties of that theory. Since worms (modulo provability) stand in one-one relation with the ordinals, we can also define Hardy functions indexed by worms. 

The main difference is that worms do not behave exactly the same way as ordinals as they may have additional structure and do not behave syntactically the same way as the normal forms of the corresponding ordinals. We will first inspect a somewhat simplified version of that behavior in the so-called \textit{tree ordinals} and compare their induced Hardy hierarchies to one commonly used on ordinals.
\begin{definition}[Tree ordinals]
    The set of tree ordinals $\mathcal{T}$ is the least set of terms defined as follows:
    \begin{itemize}
        \item $0\in \mathcal{T}$;
        \item Given $t_1, \ldots, t_n \in \mathcal{T}$ then $\omega^{t_1} + \ldots + \omega^{t_n} \in \mathcal{T}$.
    \end{itemize}
\end{definition}
For $t\in \mathcal{T}$ and $x\in \mathbb N$ we define $t\cdot x$ as $t\cdot 0 := 0$ and $t \cdot (x+1) := t\cdot x + t$. By $1$ we denote the tree ordinal $\omega^0$ and, given a natural number $s$ by $s$ we denote $1\cdot s$. The limit tree ordinals are those whose rightmost summand is $\omega^t$ with $t\neq 0$.
With this notation at hand we can define the usual \textit{fundamental sequences} on the tree ordinals as follows:

\begin{definition}
Let $t\in \mathcal{T}$ and $x \in \mathbb{N}$. Furthermore, let $\lambda$ be a limit ordinal. We define:

\begin{itemize}
    \item $0[ x ] := 0$;
    \item $1[ x ] := 0$;
    \item $(t+1) [ x ] := t $;
    \item $(t+ \omega^{s+1})[ x ] := t+ \omega^s \cdot x $;
    \item $(t+\omega ^\lambda) [ x ] := t + \omega ^{\lambda [ x ]}$.
\end{itemize}
We write $t\leq_n s$ to denote $t=s[n]^m$ for some natural number $m$.
\end{definition}

We will use the very same notation for tree terms as we do for base-$\omega$ Cantor normal forms of ordinals.
The induced ordinal of a term $t= \omega^{t_1} + \ldots + \omega^{t_n}$ is given by the function $o(t) = \omega^{o(t_1)} + \ldots + \omega^{o(t_n)}$ with $o(0) = 0$. Thus, the resulting CNF notation for $o(t)$ will be rather similar to $t$ where `smaller' terms may vanish.

Based on the tree notations, we can consider for each $t\in \mathcal T$ the corresponding the so-called \textit{Hardy function} $H_t(x) : \mathbb{N} \to \mathbb{N}$.

\begin{definition}
    For $x\in \mathbb{N}$ and $t\in \mathcal T$ we define
    \begin{itemize}
    \item $H_0 (x) = x$;
    \item $H_{t+1}(x) = H_t(x+1)$;
    \item $H_t (x) = H_{t[x]}(x)$, where $t$ is a limit term.
\end{itemize}
\end{definition}

We shall need to compare these functions to the regular Hardy functions that are indexed by ordinals. We recall the definition of those.

\begin{definition}
    For $x\in \mathbb{N}$, for $\alpha, \lambda < \varepsilon_0$ where $\lambda$ is a limit ordinal, we define
    \begin{itemize}
    \item $H_0 (x) = x$;
    \item $H_{\alpha+1}(x) = H_\alpha(x+1)$;
    \item $H_\lambda (x) = H_{\lambda[x]}(x)$, where $\lambda$ is a limit ordinal.
\end{itemize}

\end{definition}

Clearly, for every $\alpha < \varepsilon_0 $ there is a canonical term $t$ such that $o(t) = \alpha$ and $H_\alpha(x) = H_t (x)$. In general, we will have that $H_t(x) \geq H_{o(t)}(x)$. However, we can estimate that $H_t(x)$ does not grow essentially much faster than $H_{o(t)}(x)$. In particular, we claim that for every tree term $t$, there is a natural number $c_t$ so that for every number $x$ we have that $H_t (x) \leq H_{o(t)} (x+c_t)$. Moreover, this number $c_t$ is elementary definable from $t$.

The evaluation of the value of either $H_t(x)$ or $H_\alpha(x)$ functions may be recorded in {\em evaluation sequences}
\[e = \big ((\xi_0,x_0,0),\ldots,(\xi_i,x_i,i),\ldots,(\xi_n,x_n,n) \big),\]
where assuming that $e$ is an evaluation sequence for $H_t(x)$, then $(\xi_n,x_n,n)=(t,x,n)$, $\xi_0=0$ and for $i<n$ we have that $\xi_{i} = \xi_{i+1}[x_{i+1}]$ and $x_{i} = x_{i+1} + \delta$, where $\delta=0$ if $\xi_{i+1}$ is a limit and $1$ otherwise.
These sequences can be used to formalize Hardy hierarchies within arithmetic using $\Sigma_1$ formulas, and will be essential in our treatment within $\ea$ below.

A common tool for comparing Hardy functions is to define a norm function $ N: \mathcal{T} \to \mathbb{N}$ as a weak bounding measure on the structure of the tree ordinals.
\begin{definition}[Norm function]
    We consider the norm function $N$ defined on the ordinals $<\varepsilon_0$ as well as the tree ordinals, as follows:
    \begin{itemize}
        \item $N0 = 0$;
        \item $N(\omega ^\alpha + \beta) = 1 + N\alpha + N\beta$.
    \end{itemize}
    When $N$ is applied to ordinals below $\varepsilon_0$ we shall require that $\beta < \omega^{\alpha+1}$ in the second clause and we observe that such a decomposition is indeed unique.
\end{definition}

However, the norm functions alone are not sufficient to set up the inductive arguments needed for our comparison between the different Hardy functions. Therefore, the intent is to use a much weaker measure to dilute the progressive growth of the difference between the norm of a tree ordinal and its corresponding ordinal. 

We write $t<s$ to abbreviate $o(t)<o(s)$.
\begin{definition}
    Given a tree ordinal $t = \omega ^{t_1} + \ldots + \omega^{t_n} \in \mathcal{T}$, we inductively define its ordinal correction function via $Cr(0) = 0$ and, 
    \[
    Cr(t) = \sum_i \{ N(\omega ^{t_i}) : t_i < t_j \text{ for some } j>i\} + \max\{ Cr(t_i) : i \leq n\} . 
    \]
\end{definition}

The ordinal correction function of a tree ordinal intents to provide a measure on the terms lost as we transform the tree ordinal into its corresponding ordinal and compare their notations. An important property it must satisfy is the following:

\begin{lemma}[EA]
$Cr(t[x]) \leq Cr(t)$ for every tree ordinal $t$ and every natural number $x$. 
\end{lemma}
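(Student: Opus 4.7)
The proof is by induction on the structure of $t$, with a case analysis on the form of the rightmost summand. The base case $t = 0$ is immediate since $0[x] = 0$. For the induction step write $t = s + \omega^r$ where $s = \omega^{s_1} + \ldots + \omega^{s_m}$. As a preliminary observation I would verify directly from the definition that appending $\omega^0$ to any tree ordinal does not change its correction: the new last summand has no later summand to be exceeded by, no $s_i$ satisfies $s_i < 0$, and $Cr(0) = 0$ does not raise the max. This yields both the auxiliary identity $Cr(u+1) = Cr(u)$ for every $u \in \mathcal{T}$ and Case A ($r = 0$, so $t[x] = s$).

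For Case B, $r = u+1$, so $t[x]$ replaces the final $\omega^{u+1}$ by $x$ consecutive copies of $\omega^u$. Each inserted $\omega^u$ has only equal later exponents among the newly inserted copies (or no later exponent at all), and hence fails the strict inequality required to enter the defining sum of $Cr$. For an index $i \leq m$, the criterion to enter the sum of $Cr(t[x])$ is $s_i < s_j$ for some later $j \leq m$ or $s_i < u$; for $Cr(t)$ it is $s_i < s_j$ for some later $j \leq m$ or $s_i \leq u$. The former is strictly more restrictive, so the sum for $t[x]$ does not exceed that for $t$. The max over subterms' corrections contributes $Cr(u)$ on the $t[x]$ side and $Cr(u+1)$ on the $t$ side; by the auxiliary identity these are equal. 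Hence $Cr(t[x]) \leq Cr(t)$.

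For Case C, $r$ is a limit $\lambda$ and $t[x] = s + \omega^{\lambda[x]}$. Since $\lambda[x] < \lambda$ as induced ordinals we have $\{i : s_i < \lambda[x]\} \subseteq \{i : s_i < \lambda\}$, so the sum part does not grow. The max contribution of the final summand changes from $Cr(\lambda)$ to $Cr(\lambda[x])$, and the induction hypothesis applied to $\lambda$ (a proper subterm of $t$) yields $Cr(\lambda[x]) \leq Cr(\lambda)$.

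The delicate step is Case B, where the rewriting $\omega^{u+1} \mapsto \omega^u \cdot x$ must be shown to preserve the bound on both the sum (because the repeated equal exponents do not trigger the strict later-and-larger condition) and the max (thanks to $Cr(u+1) = Cr(u)$). All comparisons, sums and maxes involved are elementary and uniformly bounded in the sizes of $t$ and $x$, so the induction formalizes in $\ea$ as required by the lemma statement.
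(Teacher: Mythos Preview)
Your argument is correct and matches the paper's proof: the same structural induction with the same three cases (the paper notes equality in the successor case, which is precisely your auxiliary identity $Cr(u+1)=Cr(u)$, and then treats your Cases B and C). One small correction in Case C: it is not true in general that $o(\lambda[x])<o(\lambda)$ for limit tree ordinals (take $\lambda=\omega^{2}+\omega$, where $o(\lambda[x])=\omega^{2}=o(\lambda)$), but the weak inequality $o(\lambda[x])\leq o(\lambda)$ does hold and already suffices for the set inclusion $\{i:s_i<\lambda[x]\}\subseteq\{i:s_i<\lambda\}$ you need.
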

\begin{proof}
By induction on the structure of the tree ordinal. Let $t = \omega ^{t_1} + \ldots + \omega^{t_n} $, we note that if $t$ is a successor, we have equality and so we consider two cases:
\begin{enumerate}
    \item ($t_n = s+1$). Then
    \[t[x] = \omega ^{t_1} + \ldots + \omega^{t_{n-1}}+\omega^{s} x = \omega ^{t'_1} + \ldots + \omega^{t'_{n+x}},\]
    where
    \[t'_i = \begin{cases}
        t_i & \text{ if } i<n\\
        s & \text{ otherwise}.
    \end{cases}
    \]
    If $t'_i < t'_j $ then $i < n$ and if $j\geq n$ then $t_i = t'_i < t_n$. Additionally, $Cr(t_n) = Cr(s)$ and thus $Cr(t[x]) \leq Cr(t) $.
    \item ($t_n \in {\sf Lim}$). Then
    \[t[x] = \omega ^{t_1} + \ldots + \omega^{t_n[x]},\]
    by IH $Cr(t_n[x]) \leq Cr(t_n)$ and so $Cr(t[x]) \leq Cr(t)$.
\end{enumerate}
\end{proof}

The following lemma is folklore and will later be used to divide the main proof into two cases, thus restricting the structural distinctions to be considered between the ordinals and the terms.

\begin{lemma}[EA]\label{lemmNormBound}
If $\alpha<\beta$, if $x\geq 2$ and $N\alpha\leq N\beta+x-2$ then, $\alpha \leq \beta[x]$. If $\beta$ is a limit, the inequality is strict and $N\alpha \leq N\beta[x] + x -2$.

\end{lemma}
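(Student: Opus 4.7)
The plan is to argue by induction on $\beta$, with a case split on its shape. The successor case $\beta = \gamma + 1$ is immediate: $\beta[x] = \gamma$ and $\alpha < \gamma + 1$ yields $\alpha \leq \gamma = \beta[x]$, with no strict inequality or auxiliary norm bound to verify. The limit case is the substantive one; there I will first peel off the longest common Cantor normal form prefix of $\alpha$ and $\beta$, reducing the problem to $\beta = \omega^\gamma$ with $\gamma > 0$ and $\alpha < \omega^\gamma$.

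For $\beta = \omega^\gamma$, the argument splits further on whether $\gamma$ is a successor or a limit. If $\gamma = \delta + 1$, then $\beta[x] = \omega^\delta \cdot x$ and every CNF exponent of $\alpha$ is at most $\delta$; the hypothesis $N\alpha \leq N\beta + x - 2$ then bounds the number of leading $\omega^\delta$-summands of $\alpha$ strictly below $x$, yielding $\alpha < \omega^\delta \cdot x = \beta[x]$. If instead $\gamma$ is itself a limit, then $\beta[x] = \omega^{\gamma[x]}$; writing $\alpha = \omega^{\alpha_1}\cdot j + \alpha'$ with $\alpha_1 < \gamma$ and $\alpha' < \omega^{\alpha_1}$, I would apply the induction hypothesis to the pair $(\alpha_1, \gamma)$, after verifying that the norm bound propagates, to obtain $\alpha_1 < \gamma[x]$ and hence $\alpha < \omega^{\alpha_1+1} \leq \omega^{\gamma[x]} = \beta[x]$.

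The strict inequality asserted in the limit case falls out of the analysis above, since each subcase actually produces $\alpha < \beta[x]$ rather than mere $\leq$. The auxiliary bound $N\alpha \leq N\beta[x] + x - 2$ is then established by a direct computation that exploits how $N\beta[x]$ relates to $N\beta$: in the successor-exponent subcase $N\beta[x]$ grows by roughly $x - 1 + (x-1) N\delta$, while in the limit-exponent subcase it is controlled by the inductive bound on $N\gamma[x]$.

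The main obstacle I expect is the bookkeeping of the norm bound through the induction: one must verify at each step that the slack $x - 2$ is tight enough to absorb the increments caused by the CNF manipulation and the inductive invocation, yet not so slack that it trivialises the base steps. This is delicate but entirely routine --- indeed folklore, as the surrounding text indicates --- so the proof is essentially a careful unwinding of definitions rather than a conceptually deep argument.
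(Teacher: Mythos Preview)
Your proposal is correct and follows essentially the same route as the paper: induction with a case split on whether the final exponent of $\beta$ is a successor or a limit, a counting argument in the former case and an inductive appeal on the exponent in the latter. The only packaging difference is that the paper carries the auxiliary claim $N\beta \leq N(\beta[x])$ through the induction (which immediately yields $N\alpha \leq N\beta[x] + x - 2$ from the hypothesis), whereas you plan to recover the auxiliary norm bound by a separate computation; both amount to the same work.
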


\begin{proof}
The case for $\beta$ a successor ordinal is immediate. We prove that $\alpha < \beta[x]$ and $ N\beta \leq N(\beta[x])$ by induction on norms,
\begin{enumerate}
    \item ($\beta = \gamma + \omega^{\delta +1 } $). If $\alpha \geq \beta[x]$, then $\alpha = \gamma  + \omega ^\delta x + \rho $ but then $N \alpha  = N \beta  - 1 + N( \omega ^\delta (x-1)) + N \rho > N \beta + x -2$, a contradiction. Additionally, $N\beta[x] = N (\gamma + \omega ^\delta x) = N\beta -1 + N \omega^ \delta (x-1) \geq N \beta $. 
    \item ($\beta = \gamma + \omega ^\lambda $). If $\alpha \geq \beta[x]$, then $\alpha = \gamma  + \omega ^{\lambda'} + \rho$ with $\lambda >\lambda' \geq \lambda[x]$ and $N \lambda' <  N\lambda + x -2 $, so by IH, $\lambda' < \lambda[x]$ as a limit ordinal, a contradiction. Additionally, by IH $N \lambda [x] \geq N \lambda $ and therefore, $N\beta[x] \geq N \beta $.
\end{enumerate}
\end{proof}

So we can direct our attention at the case where $o(t) =\beta$ and the comparison is then to be made between the application of the fundamental sequences; on $t$ and on $\beta$ respectively. This adds a second requirement on the correction function where intuitively, its growth over the tree ordinals should be large enough so that the norm bound in the above lemma will be preserved on $x+Cr(t)$. 
\begin{lemma}[EA]\label{lemm: Norm on term step}
$N(o(t[x])) \leq N(o(t)[x + Cr(t)]) + x + Cr(t) -2$ and $o(t[x]) \leq o(t)[x+Cr(t)]$ for $x\geq 2$.
\end{lemma}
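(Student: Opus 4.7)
The plan is to proceed by structural induction on $t$, matching the clauses in the definition of the fundamental sequence. The base case $t = 0$ is immediate since $t[x] = 0$. For the induction step, write $t = \omega^{t_1} + \ldots + \omega^{t_n}$ and split by the shape of $u := t_n$. If $u = 0$, then $t$ is a successor tree ordinal and $o(t)$ is a successor ordinal; here $o(t[x]) = o(t) - 1 = o(t)[x + Cr(t)]$, so both claims collapse to $0 \leq x + Cr(t) - 2$, which holds for $x \geq 2$.

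The two substantive cases are $u = s+1$ (successor tree exponent) and $u$ a tree limit. In the first, $t[x] = \omega^{t_1} + \ldots + \omega^{t_{n-1}} + \omega^s \cdot x$; writing $t' = \omega^{t_1} + \ldots + \omega^{t_{n-1}}$ we have $o(t[x]) = o(t') + \omega^{o(s)} \cdot x$ and $o(t) = o(t') + \omega^{o(s)+1}$, the latter a limit. I would verify $o(t[x]) < o(t)$ directly from $\omega^{o(s)} \cdot x < \omega^{o(s)+1}$ and establish the norm bound $N(o(t[x])) \leq N(o(t)) + x + Cr(t) - 2$ by a direct calculation in CNF; Lemma~\ref{lemmNormBound} applied with $\alpha = o(t[x])$ and $\beta = o(t)$ then simultaneously yields $o(t[x]) \leq o(t)[x + Cr(t)]$ and the relativised norm inequality in the statement. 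For $u$ a tree limit, the induction hypothesis at $u$ gives $o(u[x]) \leq o(u)[x + Cr(u)]$ together with the corresponding norm estimate. Since $Cr(u) \leq Cr(t)$ by definition of $Cr$ and the fundamental sequence of a limit ordinal is monotone in its parameter, this upgrades to $o(u[x]) \leq o(u)[x + Cr(t)]$; adding $o(t')$ on the left and accounting for absorption yields the statement for $t$.

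The main obstacle is \emph{absorption}. When $o(t) = o(t') + \omega^{o(u)}$ is put in CNF, every summand $\omega^{o(t_i)} d_i$ of $o(t')$ with $o(t_i) < o(u)$ disappears, while a summand with $o(t_i) = o(u)$ is merged into the coefficient. Yet in $o(t[x])$ the trailing exponent drops to $o(s)$ (respectively $o(u[x])$), so summands of $o(t')$ that were absorbed in $o(t)$ can re-emerge, either as standalone CNF-terms or merged with $\omega^{o(s)} \cdot x$. Those extra summands must be dominated by the new tail $\omega^{o(u)[x + Cr(t)]}$ via the norm half of Lemma~\ref{lemmNormBound}. The function $Cr(t)$ is engineered precisely for this: its first summand $\sum \{ N(\omega^{t_i}) : t_i < t_j \text{ for some } j > i \}$ accumulates exactly the norm budget of the absorbed tree terms, so shifting the fundamental-sequence parameter from $x$ to $x + Cr(t)$ buys enough room to apply Lemma~\ref{lemmNormBound} to each re-emerging exponent.

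The norm computations rely on the identity $N(\omega^\alpha + \beta) = 1 + N\alpha + N\beta$ when $\beta < \omega^{\alpha+1}$, which makes $N$ subadditive under general ordinal addition with loss equal to the norms of the absorbed summands; the bookkeeping then reduces to checking that, in each subcase, the $Cr(t)$ budget absorbs exactly this loss when bounding $N(o(t[x]))$ against $N(o(t)[x + Cr(t)]) + x + Cr(t) - 2$. All manipulations are $\Delta_0$ in the elementary coding of tree ordinals and ordinal CNFs, so the whole argument formalises routinely in $\ea$.
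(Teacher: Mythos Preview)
Your inductive scaffolding and your treatment of absorption via $Cr$ are broadly on the right track, and the limit-exponent case is essentially what the paper does. But the successor-exponent case contains a genuine gap: the intermediate bound you propose,
\[
N(o(t[x])) \leq N(o(t)) + x + Cr(t) - 2,
\]
is false in general, so the detour through Lemma~\ref{lemmNormBound} with $\beta=o(t)$ cannot work.

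Take $t=\omega^2$, so $u=s+1$ with $s=1$, $t'=0$, and $Cr(t)=0$. Then $o(t)=\omega^2$ with $N(o(t))=3$, while $t[x]=\omega\cdot x$ and $N(o(t[x]))=2x$. Your bound would demand $2x\leq x+1$, impossible for $x\geq 2$. The underlying phenomenon is that a single fundamental-sequence step on a limit $\beta=\delta+\omega^{\gamma+1}$ at parameter $y$ \emph{multiplies} norms, since $N(\beta[y])=N\delta+y(1+N\gamma)$; hence $N(o(t[x]))$ cannot be controlled by $N(o(t))$ plus something merely linear in $x$.

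The paper sidesteps this by never comparing against $N(o(t))$. It splits $t=s+r+\omega^{t_0}$ with $r$ the part absorbed by $\omega^{t_0}$, and in the case $t_0=t'+1$ computes \emph{directly}
\[
N(o(t[x]))=N(o(s))+N\big(\omega^{o(t')}(k+x)\big)\ \leq\ N(o(s))+N\big(\omega^{o(t')}(x+Cr(t))\big)=N\big(o(t)[x+Cr(t)]\big),
\]
using only that the number $k$ of re-emerging summands from $r$ satisfies $k\leq Nr\leq Cr(t)$. Thus the correction $Cr(t)$ enters through the \emph{parameter} of the fundamental sequence of $o(t)$ (soaking up the extra multiplicity $k$), not as additive slack on $N(o(t))$. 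Your argument can be repaired by making this direct comparison; as written, however, the successor case does not go through.
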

\begin{proof}
We will look only at the cases where $t\in {\sf Lim}$, as the case for successor $t$ is straightforward.
Let $t= s + r + \omega^{t_0}$ with $o(r+\omega^{t_0}) = o( \omega^{t_0})$ and $o(s+\omega^{t_0})= o(s)+o(\omega^{t_0})$, and proceed by induction on the structure of $t$.
\begin{enumerate}
    \item ($t_0 = t'+1$). Then $N(o(t[x])) = N o(s) + N( \omega^{o(t')} (k+x)) \leq N o(s) + N( \omega^{o(t')} (Cr(t)+x)) = N(o(t)[x+Cr(t)])$, since $k \leq N r \leq Cr(t)$ and $o(t_0)[x] = o( t') = o(t_0 [x])$. That $o(t[x]) \leq o(t)[x+Cr(t)]$ is derived mutatis mutandis.
    \item ($t_0 \in {\sf Lim}$). Then $N(o(t[x]))= N(o(s)) + N(o(r+\omega^{t'[x]})= N(o(s)) + N(r' + \omega ^{o(t'[x])})$. By IH, $N(o(t'[x])) \leq N(o(t')[x + Cr(t')]) + x + Cr(t') -2$ and since $N(r') + Cr(t') \leq Nr + Cr(t') \leq Cr(t)$, we get $N(o(t[x])) \leq N(o(t)[x + Cr(t')]) + x + Cr(t) -2 \leq N(o(t)[x + Cr(t)]) + x + Cr(t) -2$. Additionally, by IH $o(t_0 [x]) \leq o(t_0)[x+Cr(t_0)]$. If $r' = \omega^{r_0} + r_1$, then $r_0 < o(t_0)$ and $N r_0 \leq Cr(t) \leq N(o(t_0)[x+Cr(t)] + x + Cr(t) -2$. So by Lemma \ref{lemmNormBound}, $o(r_0) < o(t_0) $, hence $o(t[x]) \leq o(t)[x+Cr(t)]$.
\end{enumerate}
\end{proof}

This amounts to the following theorem.
It gives a tight connection between Hardy hierarchies based on standard ordinals and on tree ordinals, and we expect it to find many applications beyond the current work.

\begin{theorem}[EA]\label{thm: Hardy on ordinals give hardy on tree ordinals}
Let $t \in \mathcal{T}$. If $o(t)\leq \beta$, if $n+N(o(t))-N\beta\leq m-2$ and $n+Cr(t)< m-2$ and $H_{\beta} (m)$ is defined then $H_t$ is defined and $H_t (n) \leq  H_{\beta} (m)$.
\end{theorem}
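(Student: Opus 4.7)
The approach will be simultaneous induction, proving both that $H_t(n)$ is defined and that $H_t(n) \le H_\beta(m)$, organized as an induction on the length of the evaluation sequence witnessing $H_\beta(m){\downarrow}$. Since this sequence is a finite $\Sigma_1$ object guaranteed by hypothesis, bounded $\Delta_0$-induction over its length is available in $\ea$. I will split into cases on the shape of $\beta$ (zero, successor, or limit) and, within the non-trivial cases, on whether $o(t) = \beta$ or $o(t) < \beta$.

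The easy cases are handled as follows. If $\beta = 0$ then $t = 0$ and the claim reduces to $n \le m$, immediate from the first hypothesis. If $\beta = \gamma + 1$, one unfolds $H_\beta(m) = H_\gamma(m+1)$ and applies the IH to $(t, n, \gamma, m+1)$ when $o(t) \le \gamma$, or to $(s, n+1, \gamma, m+1)$ when $t = s + 1$ satisfies $o(s) = \gamma$; here one uses that the ordinal map $o$ preserves the successor/limit distinction. The norm condition is preserved because $N\gamma = N\beta - 1$, and the correction condition because $Cr(s) \le Cr(s+1)$. In the limit case with $o(t) < \beta$, Lemma~\ref{lemmNormBound} applied at index $m$ yields both $o(t) \le \beta[m]$ and the refined bound $N(o(t)) \le N(\beta[m]) + m - 2$, so the IH applied to $(t, n, \beta[m], m)$ (whose ordinal-side evaluation is one shorter) gives $H_t(n) \le H_{\beta[m]}(m) = H_\beta(m)$.

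The main obstacle is the case $\beta$ a limit with $o(t) = \beta$, so $t$ is also a limit tree term and both sides unfold as $H_t(n) = H_{t[n]}(n)$ and $H_\beta(m) = H_{\beta[m]}(m)$. The strategy here is to call the IH on $(t[n], n, \beta[m], m)$. Lemma~\ref{lemm: Norm on term step} supplies the two ingredients: the ordinal bound $o(t[n]) \le \beta[n + Cr(t)]$ and the norm estimate $N(o(t[n])) \le N(\beta[n+Cr(t)]) + n + Cr(t) - 2$. Combining these with Lemma~\ref{lemmNormBound} applied to the pair $(\beta[n+Cr(t)], \beta)$ at index $m$ transfers the estimate to $N(\beta[m])$, producing $N(o(t[n])) - N(\beta[m]) \le m - 2 - n$ once one uses the hypothesis $n+Cr(t)<m-2$ to control the slack. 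The hypothesis plays a double role: ensuring $\beta[n+Cr(t)] \le \beta[m]$ via monotonicity of fundamental sequences, and giving enough room in the norm inequality. The correction condition for the recursive call, $n + Cr(t[n]) < m - 2$, is immediate from the established inequality $Cr(t[n]) \le Cr(t)$.

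The delicate step is precisely this norm bookkeeping in the limit--limit subcase, and it is exactly what the correction function $Cr$ is engineered to absorb: the extra norm contributed by ``hidden'' rightmost summands of $t$ that collapse in $o(t)$ but survive one step of the fundamental sequence on the tree side. I would expect the formal verification in $\ea$ to require an auxiliary lemma establishing monotonicity of $x \mapsto N(\beta[x])$ by an outer induction on $\beta$, together with explicit bookkeeping of the growth of $N(\beta[x]) - N\beta$ to justify applying Lemma~\ref{lemmNormBound} at $(\beta[n+Cr(t)], \beta)$; this is where the proof is tightest and most error-prone.
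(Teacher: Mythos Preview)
Your approach is essentially the paper's: bounded induction on the evaluation sequence of $H_\beta(m)$, splitting into the cases $o(t)<\beta$ (descend on the $\beta$ side using Lemma~\ref{lemmNormBound}) and $o(t)=\beta$ (descend on both sides using Lemma~\ref{lemm: Norm on term step} together with $Cr(t[n])\le Cr(t)$). The paper's write-up is terser---it does not separate the zero/successor/limit subcases and simply asserts $o(t[n])\le\beta[m]$ and $N(o(t[n]))\le N(\beta[m])+m-2$ in one line---but the skeleton is identical.

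One correction to your limit--limit transfer step: invoking Lemma~\ref{lemmNormBound} on the pair $(\beta[n+Cr(t)],\beta)$ at index $m$ does not work, because its hypothesis $N(\beta[n+Cr(t)])\le N\beta+m-2$ generally fails (for $\beta=\gamma+\omega^{\delta+1}$ one has $N(\beta[k])-N\beta=(k-1)(N\delta+1)-1$, which is not bounded by $m-2$). The auxiliary fact you yourself point to---monotonicity of $x\mapsto N(\beta[x])$---is the correct replacement: from $n+Cr(t)<m$ one gets $N(\beta[n+Cr(t)])\le N(\beta[m])$ directly, and then Lemma~\ref{lemm: Norm on term step} feeds straight into the IH. The paper leaves this implicit; you are right that it deserves to be stated as a small separate lemma.
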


\begin{proof}

Since $H_\beta (m)=M$ is defined, it has an evaluation sequence $e=\big ( (\beta_i,m_i) \big )_{i\leq k}$. 

\noindent {\bf Claim:}
For all $e$, $M$, if $o(t)\leq \beta$ and $n+N(o(t))-N\beta\leq m-2$ and $H_{\beta} (m) = M$ has evaluation sequence $e$ then $H_t (n) \leq  H_{\beta} (m)$ with an evaluation sequence $v\leq e^{e}$ of length $\leq k+1$.

By bounded induction on $e$.

\begin{enumerate}

\item ($\beta>o(t)$).
By Lemma \ref{lemmNormBound}, $\beta[m]\geq o(t)$.
The evaluation sequence for $H_{\beta[m]}(m)$ ($H_{\beta[m]}(m+1)$ if $\beta$ is a successor ordinal ) is $e':=\big ( (\beta_{i},m_{i}) \big )_{i\leq k-1}.$
By IH on $e'<e$, the claim follows by applying it to $e', m$, $t$ and $\beta[m]$.

\item ($\beta=o(t)$). 
Since $Cr(t[n]) \leq Cr(t)$, by Lemma \ref{lemm: Norm on term step}, we obtain that $o(t[n]) \leq \beta[m] $ and $N(o(t[n])) \leq N(\beta[m]) + m -2$ so by IH, $H_{t[n]}(n) \leq H_{\beta[m]}(m)$ ( $H_{t[n]}(n+1) \leq H_{\beta[m]}(m+1) $ if $\beta $ is a successor) with evaluation sequences $v'\leq e'^{e'}$. Then $H_t(n) \leq H_\beta (m)$ and $v \leq e^e$.

\end{enumerate}

\end{proof}

\subsection{Hardy functions and worms}
In this subsection we will be looking at the details in the differences between the Hardy functions $H_t$ that we defined in subsection \ref{subsect: Hardy functions} and the Hardy functions on the worms $h_A$. First we will transform the worms into the corresponding tree ordinals. The specific definition was chosen based on the exact structural behavior they exhibit.
\begin{definition}
Given a worm $A = n_0 \ldots n_{k-1}$ the tree ordinal corresponding to the worm is given as follows: 
\begin{itemize}
    \item $\tau (\monid) = 0 $;
    \item $\tau (  0^{k_0} A_1 ^+ 0^{k_1} \ldots 0^{k_{n-1}} A_n ^+ 0^{k_{n}}) = k_{n} + \omega^{\tau (A_n)} + k_{n-1} + \ldots + \omega^{\tau (A_0)} + k_0  $, where $A_i \neq \monid$.
\end{itemize}
We can also define the ordinal corresponding to a worm $A$ as:
\[
o(A) := o(\tau(A)).
\]
\end{definition}
Observe that $o(n) = \omega_n$, where $\omega_n$ is defined inductively as
\begin{itemize}
    \item $\omega_0=1$;
    \item $\omega_{n+1} = \omega^{\omega_n}$.
\end{itemize} 
On a more technical note, $\tau (A0B) = \tau(B) + 1 + \tau(A)$ for any worms $A,B$. 

A behavior we find in worms is that under this structural treatment, their fundamental sequences become slightly more involved than the fundamental sequences for tree ordinals:
\begin{itemize}
    \item $0\llbracket x \rrbracket = 0$;
    \item $(t+1) \llbracket x \rrbracket = t$;
    \item $(t+ \omega) \llbracket x \rrbracket = t + x$;
    \item $(t+\omega^{s+1}) \llbracket x \rrbracket = t+ (\omega^s +1) x$, where $s\neq 0$;
    \item $(t+\omega ^\lambda)\llbracket x \rrbracket = t+ \omega^{\lambda\llbracket x \rrbracket}$.
\end{itemize}



This translation of the worms into the corresponding tree ordinals as well as the corresponding fundamental sequences is natural in the following sense.

\begin{lemma}[EA]
$\tau (A\llbracket x\rrbracket) = \tau (A) \llbracket x+1\rrbracket$ for every worm $A$ and natural number $n$.
\end{lemma}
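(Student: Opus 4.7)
The plan is to prove the lemma by induction on the length $|A|$ of the worm $A$, with a case split on its leading element. The base case $A=\monid$ is immediate, both sides being $0$. If $A=0B$, the step-down gives $A\llbracket x\rrbracket = c(A) = B$; on the other hand, a direct inspection of the recursive definition of $\tau$ shows that prepending a zero merely increments the rightmost zero block, so $\tau(0B) = \tau(B)+1$ as tree ordinals. The successor fundamental-sequence clause $(t+1)\llbracket x+1\rrbracket = t$ then immediately yields $\tau(A)\llbracket x+1\rrbracket = \tau(B) = \tau(A\llbracket x\rrbracket)$.

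The main case is $A=(m+1)B$. Writing $B=B_0 B_1$ where $B_0 := h_{m+1}(B)$ is the maximal prefix of $B$ whose entries are $\geq m+1$, the worm step-down unfolds to $A\llbracket x\rrbracket = (mB_0)^{x+1}\conca B_1$. Before launching into the main computation I would first establish, as an auxiliary identity, that
\[
\tau(m^{x+1}) \;=\; \tau(m+1)\llbracket x+1\rrbracket,
\]
by a short secondary induction on $m$: the base $m=0$ is $\tau(0^{x+1}) = x+1 = \omega\llbracket x+1\rrbracket$, while for $m\geq 1$ both sides collapse to $\omega^{\tau(m)\llbracket x+1\rrbracket}$, because $\tau(m) = \omega^{\tau(m-1)}$ is a limit tree ordinal and the clause $\omega^\lambda\llbracket y\rrbracket = \omega^{\lambda\llbracket y\rrbracket}$ applies. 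With this in hand I would compute both $\tau(A)$ and $\tau(A\llbracket x\rrbracket)$ directly from the definition of $\tau$, observe that their outer parts (coming from $B_1$) coincide, and match their rightmost $\omega$-summands using the auxiliary identity together with the main inductive hypothesis.

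The hard part will be the subcase where $B_1$ is non-empty and begins with a positive entry $\leq m$: then the first positive run of $A$ does not stop at $(m+1) B_0$ but extends into $B_1$, and symmetrically the first positive run of $A\llbracket x\rrbracket$ extends past $(mB_0)^{x+1}$ into the same prefix. In that situation the rightmost $\omega$-summand on each side becomes a nested tower whose fundamental-sequence unfolding must be tracked deep inside the exponent. My plan for handling it is to strengthen the auxiliary identity to a version with a trailing factor, of the shape $\tau(m^{x+1}\conca C) = \tau((m+1)\conca C)\llbracket x+1\rrbracket$ for any worm $C$ with entries $\leq m$, and to prove it by iterating the limit-exponent clause inside the exponent. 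All inductions stay bounded in $|A|$ and $m$, and each equality of tree-ordinal terms is $\Delta_0$-decidable once the participating terms are in hand, so the entire argument formalises within $\ea$.
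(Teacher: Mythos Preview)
Your induction on $|A|$ has a genuine gap: it does not terminate when $A$ contains no zeros. Take $A=(m{+}1)B_0B_1$ with $m\geq 1$, $B_0=h_{m+1}(B)\neq\monid$, and $B_1$ either empty or consisting entirely of positive entries. Then the whole of $A$ is a single positive block, so $\tau(A)=\omega^{\tau(mB_0^-B_1^-)}$ (with $(\cdot)^-$ the entrywise decrement), and matching this against $\tau(A\llbracket x\rrbracket)$ requires precisely the lemma for the worm $mB_0^-B_1^-$, which has the \emph{same length} as $A$. Neither your main inductive hypothesis nor your strengthened auxiliary identity applies here: the latter is stated only for $B_0=\monid$, and the sub-case $B_1=\monid$, $B_0\neq\monid$ is not even mentioned in your case split. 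Already $A=\la 2\ra\la 3\ra\top$ (so $m=1$, $B_0=\la 3\ra\top$, $B_1=\monid$) is not covered.

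The paper avoids this by running the main induction on the \emph{leftmost entry} of $A$ rather than on its length. Writing $A=mC^+0B$ (or $A=mC^+$ in the zero-free case), so that the first positive block is $mC^+=((m{-}1)C)^+$, one gets $\tau(A)=\tau(B)+1+\omega^{\tau((m{-}1)C)}$ and checks directly that $\tau(A\llbracket x\rrbracket)=\tau(B)+1+\omega^{\tau(((m{-}1)C)\llbracket x\rrbracket)}$; since $(m{-}1)C$ has strictly smaller leading entry, the inductive hypothesis applies, and for $m\geq 2$ the term $\tau((m{-}1)C)$ is a limit so the clause $(t+\omega^\lambda)\llbracket y\rrbracket=t+\omega^{\lambda\llbracket y\rrbracket}$ finishes the job. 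Your secondary induction on $m$ for the auxiliary identity is exactly this mechanism, specialised to the case $B_0=\monid$; the fix is simply to promote that induction on $m$ to the primary one and drop the length induction. Everything else in your plan is sound.
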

\begin{proof}
By induction on the number of the leftmost element of the worm.
\begin{enumerate}
    \item ($A= 0 B$). Then, $\tau (A) = \tau (B) + 1$ and hence it is clear.
    \item ($A = 10B$). Then,
    \begin{align*}
    \tau(A\llbracket x \rrbracket)& = \tau(0^{x+1} 0 B) 
     = \tau (B) + 1 + \omega\llbracket x+1 \rrbracket = \tau(A) \llbracket x+1 \rrbracket.
    \end{align*}
    \item ($A= 1C^+0B$). In this case,
\begin{align*}    
    \tau (A\llbracket x \rrbracket) & = \tau((0C^+)^{x+1}0B) = \tau(B) + 1 + (\omega^{\tau(C)}+1)^{x+1} \\
&    = \tau(B)+1+ \omega^{\tau(C0)}\llbracket x+1 \rrbracket = \tau(A)\llbracket x +1 \rrbracket.
\end{align*}
    \item ($A = mC^+ 0B$ where $1<m$). Then
    \[\tau (A \llbracket x  \rrbracket) = \tau(B) + 1+ \omega ^{\tau( C(m-1)\llbracket x\rrbracket)} .\]
    By the IH,
    $\tau( C(m-1)\llbracket x\rrbracket) = \tau( C(m-1))\llbracket x+1 \rrbracket,$
    so  
\[    
    \tau(B) + 1+ \omega ^{\tau( C(m-1)\llbracket x\rrbracket)}  =\tau(B) + 1+ \omega ^{\tau( C(m-1))}\llbracket x+1\rrbracket, \]
    and hence
    $\tau (A\llbracket x \rrbracket) = \tau(A) \llbracket x+1 \rrbracket$.
\end{enumerate}
The proof for the cases where $A=B^+$ for some $B$ follows mutatis mutandis.
\end{proof}
The corresponding Hardy function on the tree ordinals is defined accordingly.
\begin{definition}\label{def: hardy on worm based tree ordinals}
Define the following Hardy function on tree ordinals:
\begin{itemize}
    \item $h_0 (x) = x$;
    \item $h_{t+1}(x) = h_t(x+1)$;
    \item $h_t (x) = h_{t\llbracket x \rrbracket}(x+1)$, where $t$ is a limit term.
\end{itemize}
Then $h_{A} (x) + x + 1 = h_{\tau(A)}(x+1)$.
\end{definition}

The difference between the fundamental sequences inspired by the worms $\cdot\llbracket \cdot \rrbracket$ to the more natural ones $\cdot [\cdot ]$ is that the former may contain additional instances of $+1$. Further applications of the corresponding sequences will maintain this overall distinction, where the two terms will be similar, but with instances of $+1$ possibly added when applying $\cdot\llbracket \cdot \rrbracket$ rather than $\cdot[\cdot]$.
We formalize this via two relations: $tRt'$ means that $t$ looks like $t'$ but possibly with more instances of $+1$, while $\tilde R $ is defined analogously but does not allow for the addition of $+1$ terms at the end.
Let us make this precise.

\begin{definition}
    We define the reduction relation $R$ inductively on the structures of tree ordinals as follows: Given $t= k_{-1} + \omega^{t_0} + k_0 + \omega^{t_1} + \ldots + k_{n-1} + \omega^{t_n} + k_{n} $, then
    \[tR\,t' 
\ \ \Leftrightarrow \ \   t' = k_{-1} + \omega^{t'_0} + (k_0-e_0) + \omega^{t'_1} + \ldots + (k_{n-1}-e_{n-1}) + \omega^{t'_n} + (k_{n}-e_{n}),\]
 where $e_i \in 2$ and $t_i R \,t'_i $ for all appropriate $i$'s.\\
    The end-agreeable reduction relation $\tilde{R}$ is defined as follows: Given $t= k_{-1} + \omega^{t_0} + k_0 + \omega^{t_1} + \ldots + k_{n-1} + \omega^{t_n} + k_{n} $, then
    \[t\tilde{R}\,t'
\ \ \Leftrightarrow    \ \
    t' = k_{-1} + \omega^{t'_0} + (k_0-e_0) + \omega^{t'_1} + \ldots + (k_{n-1}-e_{n-1}) + \omega^{t'_n} + (k_{n}),\]
    where $e_i \in 2$ and $t_i R \,t'_i $ for all appropriate $i<n$ and $t_n \tilde{R}\, t'_n$.
\end{definition}
The relation $\tilde{R}$ has the additional property that if $t\tilde{R}\,t'$, then $o(t) = o(t')$. However the $\tilde{R}$ relation is not closed under the corresponding fundamental sequences and we will instead typically land into the $R$ relation.
\begin{lemma}[EA]
If $t\tilde{R}\, t'$, then $t\llbracket x \rrbracket R \,t'[x]$ for any natural number $x$.
\end{lemma}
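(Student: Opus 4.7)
The plan is to proceed by induction on the structure of $t$, with cases based on the form of its last summand. In the base case $t=0$, the relation $t\,\tilde R\,t'$ forces $t'=0$ and both $t\llbracket x\rrbracket$ and $t'[x]$ are $0$, so $0\,R\,0$ trivially. For the inductive step, decompose $t$ as $k_{-1}+\omega^{t_0}+k_0+\ldots+\omega^{t_n}+k_n$ and correspondingly write $t'=k_{-1}+\omega^{t'_0}+(k_0-e_0)+\ldots+\omega^{t'_n}+k_n$, with $t_i\,R\,t'_i$ for $i<n$ and $t_n\,\tilde R\,t'_n$. Along the way one observes (by a straightforward induction) that $\tilde R\subseteq R$, which will be used silently.

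When $t$ ends in a positive natural $k_n>0$ (so $t$ is a successor), both $t\llbracket x\rrbracket$ and $t'[x]$ strip the last $+1$, yielding terms that differ only in the interior; the required $R$ holds with the same $e_0,\ldots,e_{n-1}$ and $e_n=0$. When the last block is $\omega^{t_n}$ with $t_n=\lambda$ a limit term, we have $t\llbracket x\rrbracket=u+\omega^{\lambda\llbracket x\rrbracket}$ and $t'[x]=u'+\omega^{\lambda'[x]}$ where $\lambda\,\tilde R\,\lambda'$; applying the induction hypothesis to the strictly smaller $\lambda$ gives $\lambda\llbracket x\rrbracket\,R\,\lambda'[x]$, which combined with the $R$-data for the prefix yields the conclusion. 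When the last block is $\omega^1=\omega$, both $\llbracket x\rrbracket$ and $[x]$ rewrite it to the natural $x$, which is handled analogously: the $R$ relation is witnessed by the same $e_i$'s for $i<n$ and the identity on the tail $+x$.

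The main obstacle is the successor-exponent case $t=u+\omega^{s+1}$ with $s\neq 0$. Here $t\llbracket x\rrbracket = u+(\omega^s+1)x$, which, written out as a decomposition, inserts $x$ extra $+1$'s between $x$ copies of $\omega^s$, whereas $t'[x]=u'+\omega^{s'}\cdot x$ has no such extra constants. This is precisely why the conclusion must be $R$ rather than $\tilde R$: the relation $R$ is designed to permit subtracting any $e_i\in\{0,1\}$ from the intermediate constants $k_i$, so the $x$ spurious $+1$'s in $t\llbracket x\rrbracket$ are absorbed by choosing $e_i=1$ at exactly the positions where they appear, while the exponents $s$ vs $s'$ are matched by the $R$-relation inherited from $t_n\,\tilde R\,t'_n$ (noting $s+1\,\tilde R\,s'+1$ implies $s\,R\,s'$ after stripping the matching trailing $+1$).

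Once these cases are verified, the induction closes. The only technical subtlety is bookkeeping: one must be careful that the decomposition is well defined when $n=0$ (i.e., $t$ has no $\omega$-powers at all), in which case $\tilde R$ collapses to equality and the statement is immediate, and one must check that the induction hypothesis is being applied to a strictly smaller tree ordinal in each recursive step (to the exponent $\lambda$ in the limit case, or to nothing in the other cases). No deep ingredient beyond the definitions and the structural observation about the "extra $+1$'s" is needed.
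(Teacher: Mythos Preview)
Your proposal is correct and follows essentially the same approach as the paper: a structural induction on $t$, with the key observation that in the successor-exponent case the extra $+1$'s introduced by $\llbracket\cdot\rrbracket$ are absorbed by choosing $e_i=1$ in the definition of $R$, while the limit-exponent case reduces to the induction hypothesis applied to the exponent. Your write-up is in fact more careful than the paper's own proof, which handles the $\omega$ case and the general $\omega^{r+1}$ case together and contains a couple of typographical slips in the displayed conclusions.
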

\begin{proof}
If $t,t'$ are both successors, then it is clear. So assume they are limit tree ordinals and we prove the lemma by induction on the structures of $t$ and $t'$.
\begin{enumerate}
    \item ($t= s + \omega^{r+1}$). Then $t\llbracket x\rrbracket = s+ (\omega^r +1)x$ and $t'[x]= s'+ (\omega^{r'} + (1-1))x$ and since $sR\,s'$ and $rR\,r'$, we also have that $tR\,t'$.
    \item ($t= s+ \omega^r$). Then $t\llbracket x \rrbracket = s + \omega^{r\llbracket x \rrbracket}$, by the IH $r\llbracket x \rrbracket R\, r [x]$ and hence $t\llbracket x \rrbracket R\, t[x]$.
\end{enumerate}
\end{proof}

Overall, passing from $\cdot[\cdot]$ to $\cdot \llbracket\cdot\rrbracket$ will produce a few minute increases in various places. The idea is that for an appropriate constant $c$, the Hardy function $H_t(x+c)$ will leave us room to perform corrections over these changes appearing in $h_t(x)$. We first make an evaluation to turn the typical additional $c$ copies produced into an exponential increase.
\begin{lemma}[EA]
If $H_{\omega^s}(x)$ is defined, $x>0$ and $s\neq 0$, then $2x \leq H_{\omega^s}(x)$.\\
If $H_{\omega^s [z]}(x)$ is defined for some natural number $z>1$ and with $s \geq 2$, then $2^z x \leq H_{\omega^s [z]}(x)$.
\end{lemma}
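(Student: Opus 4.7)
The plan is to prove both inequalities by induction on the tree ordinal $s$, relying in an essential way on the additivity identity
\[H_{\alpha+\beta}(x) = H_\alpha(H_\beta(x))\]
for tree ordinals. This identity is proved by a routine induction on $\beta$, using the fact that $(\alpha+\lambda)[x] = \alpha + \lambda[x]$ for limit $\lambda$, and it can be formalised in $\ea$ whenever both sides are defined: the hypothesis that $H_{\omega^s}(x)$, respectively $H_{(\omega^s)[z]}(x)$, terminates produces an evaluation sequence that bounds every intermediate computation, so all the inductions we carry out are bounded inductions over these sequences.

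For the first inequality, the base case $s = 1$ is a direct unwinding:
\[H_\omega(x) = H_x(x) = H_{x-1}(x+1) = \cdots = H_0(2x) = 2x.\]
If $s = s'+1$ with $s' \geq 1$, then $(\omega^s)[x] = \omega^{s'}\cdot x$, and additivity gives $H_{\omega^{s'}\cdot x}(x) = H_{\omega^{s'}}^{(x)}(x)$; by the induction hypothesis each outer application of $H_{\omega^{s'}}$ at least doubles its (positive) argument, so the value is at least $2^x x \geq 2x$. If $s$ is a limit, then $s \geq \omega$ and $(\omega^s)[x] = \omega^{s[x]}$; a quick case analysis on whether the rightmost summand of $s$ is of the form $\omega^{u+1}$ or $\omega^{u}$ with $u$ a limit shows $s[x] \geq 1$ for $x \geq 1$, so the induction hypothesis applies.

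For the second inequality, I would again induct on $s$. If $s = s'+1$, then necessarily $s' \geq 1$, we have $(\omega^s)[z] = \omega^{s'} \cdot z$, and additivity reduces the claim to $H_{\omega^{s'}}^{(z)}(x) \geq 2^z x$; but this follows by iterating the first inequality $z$ times, using that every intermediate value stays $\geq x \geq 1$. If $s$ is a limit, so $s \geq \omega$, then $(\omega^s)[z] = \omega^{s[z]}$, and by inspecting the two possible shapes of the rightmost summand of $s$ one checks that $s[z] \geq 2$ whenever $z > 1$; the induction hypothesis then delivers $H_{\omega^{s[z]}}(x) \geq 2^z x$.

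The main obstacle I foresee is not the inductive skeleton, which is straightforward, but the supporting lemma on additivity of Hardy functions over tree ordinals, which the excerpt does not spell out; its EA-formalisation must be done carefully, feeding every appeal to additivity through the evaluation-sequence machinery already introduced so that the recursion on $s$ becomes a bounded recursion on a prefix of that sequence. Once additivity is in place, everything else is routine case analysis on the fundamental-sequence rules.
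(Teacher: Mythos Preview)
Your treatment of the first inequality is fine and gives a clean alternative to the paper's one-line argument (the paper simply notes that $\omega \leq_1 \omega^s$, so the evaluation sequence of $H_{\omega^s}(x)$ contains a pair $(\omega,y)$ with $y\geq x$, and $H_\omega(y)=2y$).

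The second inequality, however, has a real gap in the limit case. After writing $(\omega^s)[z]=\omega^{s[z]}$ you claim ``the induction hypothesis then delivers $H_{\omega^{s[z]}}(x)\geq 2^z x$''. But the statement being proved by induction on $s$ is
\[
\forall z>1\;\forall x\ \bigl(H_{\omega^{s}[z]}(x)\geq 2^{z}x\bigr),
\]
so the hypothesis for the smaller term $s[z]$ yields only bounds of the form $H_{\omega^{s[z]}[z']}(x')\geq 2^{z'}x'$. To use it you must unwind once more: $H_{\omega^{s[z]}}(x)=H_{\omega^{s[z]}[x]}(x)$, and now $z'$ is forced to be $x$, giving merely $H_{\omega^{s[z]}}(x)\geq 2^{x}x$. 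Since $x$ and $z$ are unrelated in the statement, this falls short whenever $x<z$ (e.g.\ $s=\omega$, $z=5$, $x=2$: you obtain $\geq 4\cdot 2$ but need $\geq 32\cdot 2$). The actual value is of course enormous, but your induction does not see it.

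What is missing is exactly the mechanism the paper uses: one must track how the \emph{second} coordinate of the evaluation sequence grows while the exponent is being reduced. The paper observes that from $s[z]$ one reaches, after at least $z-1$ many $[1]$-steps, a term of the shape $k+\omega$; correspondingly, in the evaluation sequence of $H_{\omega^{s[z]}}(x)$ one arrives at a pair whose second coordinate $v$ satisfies $v\geq x+(z-1)$. Only then does the successor-case argument apply, with $v$ (not $x$) many iterations, and the bound $2^v\cdot(\cdots)\geq 2^{z}x$ goes through. Your additivity lemma handles the successor case nicely, but in the limit case you need this growth estimate on the argument, which a bare structural induction on $s$ does not supply.
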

\begin{proof}
\begin{enumerate}
\item Since $\omega \leq_1 \omega^{s'}$, there is some $(\omega,y)$ with $y \geq x$, in the evaluation sequence of $H_{\omega^s}(x)$ and since $H_\omega (y) = 2y$, the claim follows.

\item By induction on $s$.
\begin{enumerate}
    \item ($s=r+1$). Then we show by bounded induction on $z$ that $2^z x \leq H_{\omega^s[z]}(x)$.
    \item ($s \in \sf Lim$). Then $k+\omega \leq_1 s[z]$ for some $k$. Specifically we would have $k+\omega = s[z][1]^y$ where $y \geq z-1$. Then $ (k+\omega,v)$ is an element of the sequence defining $H_{\omega^s[z]}(x)$ for some $v \geq x+y$. With one more step in the Hardy function, we end up in the previous case.
\end{enumerate}

\end{enumerate}
\end{proof}
This comes as a second step of the previous lemma to get the comparison in the argument of the Hardy function on the tree ordinals $h$. The monotonicity arguments we use in the proof below can be proved similarly to the ones for the Hardy function on worms.
\begin{lemma}[EA]
If $H_{t}(x+c)$ is defined, $t = s + \omega^r $ where $o(r)\geq 2$ and $c\geq 1$, then 
\[
H_{t[x]}(2^c(x+c)) \leq H_{t}(x+c).
\]
\end{lemma}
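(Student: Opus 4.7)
The proof proceeds by case analysis on the structure of $r$, combined with three standard properties of the Hardy hierarchy on tree ordinals, each verifiable by a routine induction on the evaluation sequence: \emph{additivity} $H_{u+v}(n)=H_u(H_v(n))$ for nonzero $v$, which holds because the fundamental sequence $[\cdot]$ only rewrites the rightmost summand and hence preserves the prefix $u$ throughout; \emph{argument monotonicity} $H_\alpha(n)\leq H_\alpha(m)$ for $n\leq m$; and \emph{index monotonicity} $H_\alpha(n)\leq H_\beta(n)$ when $\alpha\leq\beta$ and $H_\beta(n)$ is defined.

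In the successor case $r=r'+1$, the hypothesis $o(r)\geq 2$ forces $r'\geq 1$. By the definition of the fundamental sequence, $(\omega^r)[y]=\omega^{r'}\cdot y$, so $t[x+c]=t[x]+\omega^{r'}\cdot c$. Additivity then gives $H_t(x+c)=H_{t[x]}\bigl(H_{\omega^{r'}\cdot c}(x+c)\bigr)$. Since $\omega^{r'}\cdot c=\omega^{r'+1}[c]$ and $r'+1\geq 2$, the previous lemma (its second part when $c\geq 2$, its first part when $c=1$) yields $H_{\omega^{r'}\cdot c}(x+c)\geq 2^c(x+c)$, and argument monotonicity closes this case.

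In the limit case I perform an outer induction on $r$ in the tree-ordinal order. Unfolding $H_t$ by one step gives $H_t(x+c)=H_{s+\omega^{r[x+c]}}(x+c)$, with $r[x+c]<r$. Applying the inductive hypothesis to this smaller instance yields $H_{s+\omega^{r[x+c]}}(x+c)\geq H_{(s+\omega^{r[x+c]})[x]}(2^c(x+c))$. By index monotonicity it then suffices to check that the tree ordinal $(s+\omega^{r[x+c]})[x]$ dominates $s+\omega^{r[x]}$. This splits into two subclaims on the fundamental sequences of tree ordinals: if $r[x+c]$ is again a limit one verifies $r[x+c][x]\geq r[x]$, while if $r[x+c]=q+1$ is a successor (which for limit $r$ arises only when $r=v+\omega$) one verifies $\omega^{q}\cdot x\geq\omega^{r[x]}$. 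Both subclaims follow by a short structural induction on $r$, essentially because the extra $c$ in the argument of $r[\cdot]$ either produces an additional summand of the right size (successor step) or pushes the limit parameter strictly further (limit step), using $c\geq 1$.

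The main obstacle is the limit case, where the bulk of the work lies in establishing the uniform structural comparison $(s+\omega^{r[x+c]})[x]\geq s+\omega^{r[x]}$ across all shapes a limit tree ordinal can assume. Once this ingredient is available, the outer induction closes cleanly, absorbing the exponential factor $2^c$ into the single extra step gained by shifting the argument from $x$ to $x+c$.
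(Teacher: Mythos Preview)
Your successor case is correct and coincides with the paper's.  The limit case, however, rests on a principle that is \emph{false}: you invoke ``index monotonicity $H_\alpha(n)\leq H_\beta(n)$ when $\alpha\leq\beta$ and $H_\beta(n)$ is defined'', but already $H_3(2)=5>4=H_\omega(2)$.  The specific step where this bites is the passage from $H_{(s+\omega^{r[x+c]})[x]}(2^c(x+c))$ to $H_{t[x]}(2^c(x+c))$.  Your structural comparison $(s+\omega^{r[x+c]})[x]\geq t[x]$ is correct, but it does not by itself yield the Hardy-function inequality.  When $r$ has nested limits (say $r=\omega^\omega$, $c=1$, $x=2$) the two tree ordinals are $s+\omega^{\omega^2\cdot 2}$ and $s+\omega^{\omega^2}$; these are \emph{not} related by a tree-ordinal sum, so you cannot fall back on additivity $H_{u+v}(m)=H_u(H_v(m))\geq H_u(m)$ either.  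To repair the argument you would have to show that $t[x]$ actually lies on the evaluation sequence of $H_{(s+\omega^{r[x+c]})[x]}(2^c(x+c))$ --- a Bachmann-type reachability statement of essentially the same strength as what your induction was meant to bypass.  (A secondary issue: your ``outer induction on $r$ in the tree-ordinal order'' is transfinite induction, which is unavailable in $\ea$; it would have to be recast as bounded induction along the given evaluation sequence.)

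The paper avoids both problems by working directly with the evaluation sequence rather than inducting on $r$.  It argues (for $c=1$, which is all that is subsequently needed) that the tree ordinal $t'=s+\omega^{r[x]+1}$ satisfies $t'\leq_1 t[x+1]$, so some pair $(t',y)$ with $y\geq x+1$ occurs in the evaluation sequence defining $H_t(x+1)$.  Since $t'$ has a \emph{successor} exponent, one now unfolds $H_{t'}(y)=H_{t[x]+\omega^{r[x]}(y-1)}(y)=H_{t[x]}\bigl(H_{\omega^{r[x]}(y-1)}(y)\bigr)$ via genuine additivity and applies the previous doubling lemma.  Thus the limit case is reduced to the successor case in a single jump along the evaluation sequence, with no appeal to order-monotonicity of $H$.
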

\begin{proof}
We can assume that $x\geq 2$ and we prove the theorem for $c=1$. There are two cases to consider.
\begin{enumerate}
    \item ($r=r'+1$). Then $H_t(x+c) = H_{s+\omega^{r'}(x+c)}(x+c) = H_{t[x]}(H_{\omega^{r'}}(x+c)) \geq H_{t[x]}(2^c(x+1))$.
    \item ($r\in {\sf Lim}$). Then $t' = s+ \omega^{r[x] +1} \leq_1 t[x+C] $ and hence there is $y\geq x+c$ such that $(t',y)$ is an element of the sequence defining $H_t(x+c)$. Hence $H_t(x+c) = H_{t'} (y) \geq H_{t[x]}(2^c y) \geq H_{t[x]}(2^c (x+1))$.
\end{enumerate}
\end{proof}

The difference in the two fundamental sequences can be quantified by the differences in their norms. Note that this gives us a multiplicative bound while an increase in the argument of a Hardy function gives an exponential bound.
\begin{lemma}[EA]
Given $t,t' \in {\sf Lim}$ with $t\tilde{R}\,t'$ and natural number $x\geq 1$, then 
\[
N(t\llbracket x\rrbracket)-N(t'[x]) \leq \big  (N(t[x]) - N(t'[x]) \big ) +x  \leq x(Nt - Nt' +1).
\]
\end{lemma}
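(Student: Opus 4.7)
\medskip

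\noindent\textbf{Proof plan.} The chain of inequalities splits naturally into two independent claims:
(a) $N(t\llbracket x\rrbracket)\leq N(t[x])+x$, which (after adding $-N(t'[x])$ to both sides) is exactly the first inequality; and
(b) $N(t[x])-N(t'[x])\leq x(Nt-Nt')$, which (after adding $x$ to both sides) is exactly the second inequality. Notice that (a) does not involve $t'$ at all, while (b) is the only place where the hypothesis $t\,\tilde R\,t'$ enters. I would prove both by induction on the structure of $t$, and in both cases the same case split is effective: either the last $\omega$-summand of $t$ has a successor exponent, or it has a limit exponent.

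For (a), the only discrepancy between the two fundamental sequences on a limit tree ordinal occurs in the successor-exponent case. If $t=s+\omega$, then $t\llbracket x\rrbracket=t[x]=s+x$ and the difference is $0$. If $t=s+\omega^{r+1}$ with $r\neq 0$, then $t\llbracket x\rrbracket=s+(\omega^r+1)x$ whereas $t[x]=s+\omega^r\cdot x$; the extra $x$ copies of $\omega^0$ contribute exactly $x$ to the norm, so the difference is $x$. If $t=s+\omega^r$ with $r\in{\sf Lim}$, then $N(t\llbracket x\rrbracket)-N(t[x])=N(r\llbracket x\rrbracket)-N(r[x])$, which is $\leq x$ by the induction hypothesis applied to $r$.

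For (b), I first verify that $\tilde R$ preserves the shape of the trailing $\omega$-term, so that $t$ and $t'$ fall into the same subcase. In the successor-exponent case we have $t=s+\omega^{r+1}$ and $t'=s'+\omega^{r'+1}$ with $s\,R\,s'$ and $r\,R\,r'$; a direct computation yields $N(t[x])-N(t'[x])=(Ns-Ns')+x(Nr-Nr')$ and $Nt-Nt'=(Ns-Ns')+(Nr-Nr')$, so the claim reduces to $(x-1)(Ns-Ns')\geq 0$, which holds since $x\geq 1$ and $Ns\geq Ns'$. In the limit-exponent case, $t=s+\omega^r$ and $t'=s'+\omega^{r'}$ with $r,r'\in{\sf Lim}$ (the latter is forced by $\tilde R$ preserving the trailing constant $0$ and the nontriviality of the last exponent), so the induction hypothesis on $r\,\tilde R\,r'$ gives $N(r[x])-N(r'[x])\leq x(Nr-Nr')$, and combining with $Ns-Ns'\leq x(Ns-Ns')$ completes the case.

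The main obstacle I anticipate is careful bookkeeping to justify that the trailing $\omega$-term shape is preserved under $\tilde R$, and that $R$ vs.\ $\tilde R$ is tracked correctly when unfolding the canonical decomposition (for instance, when $t=s+\omega^{r+1}$ one has to derive $r\,R\,r'$ from $(r+1)\,\tilde R\,(r'+1)$ by observing that the trailing $\omega^0$ summand is preserved). Once this is in place, the rest is arithmetic. Formalisation in $\ea$ is routine: $\tilde R$, $R$, $N$, $[\cdot]$ and $\llbracket\cdot\rrbracket$ all have elementary graphs, and the induction is bounded by the syntactic complexity of $t$, so bounded induction on the term size suffices.
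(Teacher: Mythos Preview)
Your proposal is correct and follows essentially the same approach as the paper: split the chain into the two independent claims $N(t\llbracket x\rrbracket)\leq N(t[x])+x$ and $N(t[x])-N(t'[x])\leq x(Nt-Nt')$, each proved by structural induction on $t$ with the same case split (successor exponent versus limit exponent). Your writeup is somewhat more explicit than the paper's about why $\tilde R$ preserves the trailing shape and why $Ns\geq Ns'$ is available, but the computations and the induction scheme are identical.
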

\begin{proof}
By induction on the structure of $t$ we show that $Nt\llbracket x \rrbracket \leq Nt[x] +x$.
\begin{enumerate}
    \item ($t= s+1$ or $t = s + \omega $). Clear.
    \item ($t= s + \omega^{r+1}$ with $r\neq 0$). Then, $Nt\llbracket x \rrbracket = N \big ( s+ (\omega^r+1)x \big ) = Nt[x] + x$.
    \item ($t= s + \omega^r$ with $r\in \sf Lim$). Then, by IH, $Nr\llbracket x\rrbracket \leq Nr[x] + x $ and so $Nt\llbracket x \rrbracket = Ns + 1 + Nr\llbracket x \rrbracket \leq Nt[x] +x$.
\end{enumerate}
Now we show by induction on the depth of $t,t'$ that $N(t[x]) - Nt'[x] \leq x(Nt-Nt')$.
\begin{enumerate}
    \item ($t=s+1$). Clear.
    \item ($t=s+\omega^{r+1}$). Then $Nt[x] - Nt'[x]= N(s+ \omega^r x) - N(s+ \omega^{r'} x) = Nt - Nt' + N(\omega^r (x-1) - N (\omega ^{r'} (x-1) \leq x(Nt - Nt') $.
    \item ($t= s + \omega^r $ with $r\in \sf Lim$). Then $Nt[x] - Nt'[x] = Ns - Ns' + Nr[x] - Nr'[x] \leq Ns - Ns' + x(Nr - Nr') \leq x(Nt - Nt')$.
\end{enumerate}
\end{proof}

As a result, an increase on the argument by $c=2$ is sufficient. $1$ to counterbalance the increase of the argument of $h$ in the limit stages and $1$ for the difference in the fundamental sequences.
\begin{theorem}[EA]\label{thm: hardy on tree ordinals bounds worm based hardy}
Assume that $m\geq n+2$ and $H_t(m)$ is defined, then $h_t(n)$ is defined and $h_t(n) \leq H_t(m)$.
\end{theorem}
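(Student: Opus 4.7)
My plan is to proceed by bounded induction on the length of the evaluation sequence of $H_t(m)$; this is formalizable in $\ea$ since evaluation sequences can be coded by finite strings of elementarily bounded size. The invariant I maintain is: for every tree ordinal $t$ and every $n, m$ with $m \geq n+2$, if $H_t(m)$ is defined with evaluation sequence $e$, then $h_t(n)$ is defined and $h_t(n) \leq H_t(m)$, with a corresponding evaluation sequence bounded elementarily in $e$.

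The base case $t = 0$ is immediate, since $h_0(n) = n \leq m = H_0(m)$. The successor case $t = s+1$ is a direct appeal to the induction hypothesis on $s$ with arguments shifted to $n+1$ and $m+1$, justified by $m+1 \geq (n+1)+2$. The limit case $t = s + \omega^r$ with $r \neq 0$ splits into subcases. When $r = 1$, I consume the trailing finite segment of the fundamental-sequence expansion on both sides via successor steps, reducing to $h_s(2n+1) \leq H_s(2m)$, which closes by induction since $2m \geq (2n+1) + 2$.

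The substantive case is $o(r) \geq 2$, where $H_t(m) = H_{t[m]}(m)$ while $h_t(n) = h_{t\llbracket n \rrbracket}(n+1)$. Here the two fundamental sequences disagree: the preservation lemma for $\tilde R$ gives $t\llbracket n \rrbracket \, R \, t[n]$, so $t\llbracket n \rrbracket$ is obtained from $t[n]$ by inserting at most $n$ extra $+1$ summands, as quantified by the norm-comparison lemma. Each such extra $+1$ is consumed by a single successor step of $h$ and increments the argument by one. On the $H$-side, I invoke the $2^c$-expansion lemma with $c = 2$ and $x = m-2$, yielding $H_{t[m-2]}(4m) \leq H_t(m)$, which supplies exponentially much slack in the argument. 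After these reductions, $h_t(n)$ collapses to $h_{t'}(2n+1)$ for some $t'$ with $o(t') = o(t[n])$, and combining the induction hypothesis with monotonicity of $H$ in the index (using $t[n] \leq t[m-2]$ since $m \geq n+2$) closes the case, as $4m \geq (2n+1)+2$ holds with ample room.

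The main technical obstacle is the delicate bookkeeping in this last subcase, where the worm-style fundamental sequence repeatedly introduces extra $+1$ summands not present in the tree-style sequence, and these must be absorbed by the exponential slack provided by the $2^c$-expansion. The decomposition of $c = 2$ into one unit for the $+1$ baked into the limit rule of $h$ and one unit for the fundamental-sequence discrepancy is precisely the slack delivered by the hypothesis $m \geq n+2$; neither component can be dropped without breaking the induction.
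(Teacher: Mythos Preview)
Your induction invariant is too weak to close the limit case. After one limit step you must compare $h_{t\llbracket n\rrbracket}(n+1)$ with $H_{t[m]}(m)$, and these live at \emph{different} tree ordinals. You propose to bridge the gap via $t\llbracket n\rrbracket \mathrel{R} t[n]$ and then ``consume the extra $+1$ summands by successor steps of $h$,'' but the relation $R$ allows these extra $+1$'s to sit at arbitrary depth in the term---inside nested exponents and between intermediate $\omega$-summands---not only at the tail; a successor step of $h$ removes only a trailing $+1$. Hence your reduction to some $h_{t'}(2n+1)$ with $o(t')=o(t[n])$ does not go through, and even if it did, $t'$ would agree with $t[n]$ only as an ordinal, not as a tree term, so your hypothesis (stated for identical tree ordinals on both sides) would still not apply. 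The appeal to ``monotonicity of $H$ in the index'' has the same defect: $H$ here is indexed by tree ordinals, not by their underlying ordinals.

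The paper's remedy is precisely to strengthen the claim being proved by bounded induction: one shows that whenever $t\,\tilde R\, t'$ and $m \geq N(t)-N(t')+n+2$, then $h_t(n)\leq H_{t'}(m)$. Carrying the $\tilde R$-relation and the norm gap explicitly through the induction is what makes the limit step work: after unfolding, $t\llbracket n\rrbracket$ stands in the required relation to a term (such as $t'[n]+1$ in the successor-exponent subcase, or a suitable $t_0\leq_1 t'[m]$ with $t_0[1]=t'[n]$ in the limit-exponent subcase) that actually occurs in the evaluation sequence of $H_{t'}(m)$, and the norm-comparison lemma together with the $2^c$-expansion lemma absorbs the growth of $N(t)-N(t')$. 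Your starting slack $m\geq n+2$ is the correct instantiation (take $t'=t$, so $N(t)-N(t')=0$), but the gap must be permitted to widen along the recursion as the $R$-discrepancies accumulate; a fixed offset of $2$ cannot sustain the induction.
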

\begin{proof}
Since $H_t (m)=M$ is defined, it has an evaluation sequence $e=\big ( (t_i,m_i) \big )_{i\leq k}$.

\noindent{\bf Claim:} If $t\tilde{R}\,t'$, $m \geq N(t)- N(t') + n + 2$ and $H_{t'} (m) = M$ has evaluation sequence $e$, then $h_t (n)$ is defined with $h_t (n) \leq H_{t'}(m)$ and evaluation sequence $v\leq e^e$ of length $\leq k+1$.\\
By bounded induction on $e$.
\begin{enumerate}
    \item ($t=s+1$). Clear.
    \item ($t = s+ \omega$). Then $Nt\llbracket n \rrbracket - Nt'[n] = Nt - Nt' $ and hence we immediately have by IH that $ h_t(n) =h_{t\llbracket n \rrbracket} (n+1) \leq H_{t'[n]}(2m-n) = H_{t'}(m)$.
    \item ($t=s+\omega^{r+1}$ and $r\neq 0$). Then $t\llbracket n \rrbracket = s + (\omega^r +1)n$. Then
\begin{align*} 
 N(t &\llbracket n\rrbracket)-N(t'[n]+1) +n  =N(t\llbracket n\rrbracket)-N(t'[n]) +n-1 \\
 & \leq  (N(t[n]) - N(t'[n])) +2n -1 \leq n (Nt - Nt' +2) -1 \leq m 2^{Nt-Nt' +2} -1\leq m',
\end{align*}
     where $m'$ is such that $(m', t[n]+1)$ is an element of the sequence defined by $H_t(m)$ and by the IH, $h_{t\llbracket n \rrbracket}(n+1) \leq H_{t[n]+1}(m')$.
    \item ($t=s+\omega^r$ with $r\in \sf Lim$). Then there is $t_0 \leq_1 t'[m]$ such that $t_0[1] = t'[n]$. As before,
\begin{align*}    
    N(t & \llbracket n\rrbracket)-N(t_0) +n= N(t\llbracket n\rrbracket)-N(t'[n]) +n-1\\
    & \leq  (N(t[n]) - N(t'[n])) +2n -1 \leq n (Nt - Nt' +2) -1 \leq m 2^{Nt-Nt' +2} -1\leq m',
\end{align*}
where $m'$ is such that $(m', t_0)$ is an element of the sequence defined by $H_t(m)$, so by the IH, $h_{t\llbracket n \rrbracket} (n+1) \leq H_{t_0}(m)$.
\end{enumerate}
\end{proof}

\subsection{Reflection and worm battles}
The remaining step is to move from reflection principles into the corresponding assertion that a class of Hardy functions is total. We define the following iterated reflection principles on a given ordinal $\alpha$.
\begin{definition}
	Given a c.e.~base theory $T$, we define 
	\[
	\Pi_n\textnormal{-}R^{\alpha}(T):= T + \{ \Box_{\bigcup_{\beta<\alpha}\Pi_n\textnormal{-} R^{\beta}(T)} \pi \to \pi \mid \pi \in \Pi_n\}.
	\]
\end{definition}
The $\Pi_n\textnormal{-}R^{\alpha}(T)$ formulae are defined with use of the fixed point theorem and are monotonous over the ordinals $\alpha$:
\begin{align*}
    \text{if } \alpha < \beta \text{ then } \Pi_n\textnormal{-}R^{\alpha}(T) \vdash \Pi_n\textnormal{-}R^{\beta}(T).
\end{align*}

We can then use the following remark to transition from reflection principles over $\ea$ to those over $\ea^+$ since the strength of the corresponding reflection of the theories $\is_n$ is sufficient.
\begin{rem} \label{rem: RFN equivalence between U and U+ psi}
	If $ \ea \subseteq T $ and $ \psi \in \Sigma_n $ is without free variables, where $ T \vdash \psi $ and $ U $ is a subtheory of $ T $, then
	$$ T \vdash \Pi_n\text{-}\rfn(U + \psi) \leftrightarrow \Pi_n\text{-}\rfn(U)
	$$
\end{rem}
\begin{proof}
	Let $ \varphi \in \Pi_n $ which without loss of generality has at most one free variable, then since $ \psi \rightarrow \varphi  $ is a $ \Pi_n $ formula,
	\begin{align*}
		T \vdash \forall\, x \  \big( \Box_{U + \psi} \varphi (\dot{x}) \rightarrow \varphi(x) \big) &\leftrightarrow \Big( \psi \rightarrow \forall\, x \  \big( \Box_U ( \psi \rightarrow \varphi (\dot{x})) \rightarrow \varphi(x) \big) \Big)\\
		&\leftrightarrow  \forall\, x \  \big( \Box_U ( \psi \rightarrow \varphi (\dot{x})) \rightarrow (\psi \rightarrow \varphi(x)) \big); 
	\end{align*}
	therefore, $ T \vdash \Pi_n\text{-}\rfn(U) \rightarrow \Pi_n\text{-}\rfn(U + \psi)  $. The other direction comes from the fact that if $ U_1 \subseteq U_2 $, then $ T \vdash \Box_{U_1} \varphi(\dot{x}) \rightarrow \Box_{U_2} \varphi(\dot{x}) $.
\end{proof}

The following Lemma, in a sense, expresses a decompression of worms into $\alpha$-iterated reflection principles \cite{Beklemishev:2004:ProvabilityAlgebrasAndOrdinals}.

\begin{lemma} \label{lem: relationship between worms and alpha iterated reflection}
	Let $ T $ be a c.e. extension of $ \ea^+ $ whose axioms have logical complexity of $ \Pi_{n+1} $. Then for every worm $ A\in \w_n $, provably in $ \ea^+ $
	\[T + A_T \equiv_n \prfn{n+1}{o(n\downarrow A)}(T).
	\]
\end{lemma}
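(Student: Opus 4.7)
The plan is to proceed by transfinite induction on the ordinal $\alpha := o(n\downarrow A) < \varepsilon_0$, assigned to $A$ via the shift-then-ordinalise map.  The base case $\alpha = 0$ corresponds to $A = \top$: both $T + \top_T$ and $\prfn{n+1}{0}(T) = T + \{\Box_\emptyset \pi \to \pi\mid \pi\in \Pi_{n+1}\}$ collapse to $T$, so the equivalence is immediate. In the inductive step, one separates cases on the leading modality of $A$.

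Suppose first that $A = \la n \ra B$ with $B \in \w_n$, so that $o(n\downarrow A) = o(n\downarrow B) + 1$ is a successor. Observe that $A_T$ is exactly the finite axiomatisation of $R_n(T+B_T) = \Pi_{n+1}\text{-}\rfn(T+B_T)$. The inductive hypothesis applied to $B$ gives $T + B_T \equiv_n \prfn{n+1}{o(n\downarrow B)}(T)$, and the first item of the lemma on properties of partial reflection (to the effect that provably $\Pi_{n+1}$-equivalent theories yield provably equivalent $\Pi_{n+1}$-reflection principles) lets one replace the former theory by the latter inside $\Pi_{n+1}\text{-}\rfn(\cdot)$. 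The result equals $\prfn{n+1}{o(n\downarrow A)}(T)$ on the nose.

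Now suppose $A = \la m \ra B$ with $m > n$, in which case $\alpha$ is a limit. Apply the successor reduction property (Theorem~\ref{thm: generalized succ reduction property}) with $V=T$ (which by hypothesis is $\Pi_{n+1}$-axiomatised) and $S = T + B_T$: this gives $\la m \ra B_T \equiv_n \{Q^{m-1}_k(B_T) : k < \omega\}$. By the lemma preceding Corollary~\ref{cor: relationship between double bracket and double angle in GLP}, each $Q^{m-1}_k(B)$ is $\glp_\Lambda$-provably equivalent to a worm $C_k \in \w_{m-1}\subseteq \w_n$ of strictly smaller $n$-ordinal. The inductive hypothesis then gives $T + C_{k,T} \equiv_n \prfn{n+1}{o(n\downarrow C_k)}(T)$ for every $k$, and taking the $\Pi_{n+1}$-limit over $k$ identifies $T + A_T$ with $\prfn{n+1}{\sup_k o(n\downarrow C_k)}(T)$. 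When $m-n$ itself goes through limit stages one invokes the limit reduction property (Theorem~\ref{thm: generalized lim reduction property}) in the same spirit.

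The main technical obstacle will be the matching of cofinal data: one has to verify that the worm step-down $A \mapsto \{C_k\}_{k<\omega}$ produced by the reduction properties matches, under $o(n\downarrow \cdot)$, the fundamental sequences implicit in the fixed-point definition $\prfn{n+1}{\alpha}(T) := T + \{\Box_{\bigcup_{\beta<\alpha}\prfn{n+1}{\beta}(T)} \pi \to \pi \mid \pi \in \Pi_{n+1}\}$, i.e.\ $\sup_k o(n\downarrow C_k) = o(n\downarrow A)$. This is essentially a structural claim about the ordinal assignment, provable from the definition of $o$, but it requires some bookkeeping. A secondary point is that all of the above must be carried out formalisably in $\ea^+$: the reduction properties rest on Theorems~\ref{thm: generalized succ reduction property} and~\ref{thm: generalized lim reduction property}, which are themselves formalised in $\ea^+$, and the transfinite recursion $\beta \mapsto \prfn{n+1}{\beta}(T)$ is handled via an elementary ordinal presentation combined with the fixed-point theorem, so only $\Delta_0$-induction along the elementary well-order is needed.
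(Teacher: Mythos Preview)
The paper does not prove this lemma at all: it is stated with a citation to Beklemishev~\cite{Beklemishev:2004:ProvabilityAlgebrasAndOrdinals} and used as a black box. So there is no in-paper proof to compare your proposal against.

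That said, your outline is essentially the standard Beklemishev argument, and the overall shape (transfinite induction on $o(n\downarrow A)$, successor case via $A=\la n\ra B$, limit case via the reduction property applied to the leading higher modality) is correct. A few points deserve tightening. First, your successor step appeals to ``the first item of the lemma on properties of partial reflection'', but that item only gives monotonicity along provable implication; what you actually need is that if $S_1$ and $S_2$ have provably the same $\Pi_{n+1}$-consequences then $\Pi_{n+1}\text{-}\rfn(S_1)$ and $\Pi_{n+1}\text{-}\rfn(S_2)$ coincide --- true, but a slightly different statement. Second, your aside ``when $m-n$ itself goes through limit stages one invokes Theorem~\ref{thm: generalized lim reduction property}'' is out of place here: in the context of this lemma $n$ is a natural number and $A\in\w_n\subseteq\w^\omega$, so every modality is finite and the limit reduction property is never needed. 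Third, the ``main technical obstacle'' you flag --- that the $Q^{m-1}_k(B)$ are cofinal in $o(n\downarrow A)$ under the ordinal assignment --- is exactly the substantive content of Beklemishev's proof; it is not merely bookkeeping but requires the explicit computation of $o$ on worms together with the head/remainder decomposition $A \equiv h(A)\wedge \la n\ra r(A)$ (Lemma~\ref{lem: basic properties of glp concerned with building up}(iv)), which you do not mention. None of these are fatal, but your sketch underestimates the work hidden in the limit case.
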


Now to talk about $ \is_n $ for $ n{\geq}1 $, we first have by Remark \ref{rem: RFN equivalence between U and U+ psi} the following:
\[ 
\ea + \big(\la 1 \ra \la n+1 \ra \top\big)_{\ea}  \equiv \ea^+ + \big(\la 1 \ra \la n+1 \ra\top\big)_{\ea} \equiv \ea^+ + \big(\la 1 \ra \la n+1 \ra\top\big)_{\ea^+}.	
\]
Using this fact along with Lemma \ref{lem: relationship between worms and alpha iterated reflection},
\begin{corollary} \label{cor: from ISn to alpha itterated reflection}
For every natural number $n\geq1$,
\[ 
\ea + 1\textnormal{-}\con(\is_n) \equiv_1 \prfn{2}{o(0n)}(\ea ^+).
\]
\end{corollary}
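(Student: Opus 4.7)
The statement is obtained by chaining three equivalences. First I would translate $1$-consistency of $\is_n$ into the modal language: by the Leivant--Beklemishev theorem, $\is_n$ is (provably in $\ea^+$) equivalent to the single sentence $(\la n{+}1\ra\top)_{\ea}$, so
\[\ea + 1\text{-}\con(\is_n) \equiv \ea + (\la 1\ra\la n{+}1\ra\top)_{\ea}.\]
Both sides prove $\la 1\ra\top$ and hence $\ea^+$ by Corollary~\ref{cor: ea^+ is equiv to 1-con(ea)}, so the Leivant--Beklemishev equivalence, which is only available in $\ea^+$, may be applied inside the scope of the $\Pi_2$-reflection by using the finite axiomatizations from Theorem~\ref{thm:finite axiomatizability of rfn}.

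Second, I invoke the chain displayed immediately above the corollary,
\[\ea + (\la 1\ra\la n{+}1\ra\top)_{\ea} \equiv \ea^+ + (\la 1\ra\la n{+}1\ra\top)_{\ea} \equiv \ea^+ + (\la 1\ra\la n{+}1\ra\top)_{\ea^+},\]
where the first link uses Corollary~\ref{cor: ea^+ is equiv to 1-con(ea)} again and the second uses Remark~\ref{rem: RFN equivalence between U and U+ psi} to shift the base theory inside the interpreted reflection principle.

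Third, and this is the heart of the argument, I would apply Lemma~\ref{lem: relationship between worms and alpha iterated reflection} with $T := \ea^+$ (which is a $\Pi_2$-axiomatizable c.e.\ extension of $\ea^+$) and the worm $A := \la 1\ra\la n{+}1\ra\top \in \w_1$. Since $1\downarrow A = \la 0\ra\la n\ra\top = 0n$, the lemma yields
\[\ea^+ + A_{\ea^+} \equiv_1 \prfn{2}{o(0n)}(\ea^+).\]
Composing with the two previous equivalences gives the desired $\ea + 1\text{-}\con(\is_n) \equiv_1 \prfn{2}{o(0n)}(\ea^+)$. The main obstacle I anticipate is in the first step, where one must carefully justify that the Leivant--Beklemishev equivalence can be transported through $\Pi_2\text{-}\rfn(\cdot)$; this is precisely where the finite axiomatizations of the $\rfn$ schemata become essential, but once they are in place the chaining is routine.
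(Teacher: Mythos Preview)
Your proposal is correct and follows essentially the same route as the paper: the paper simply states that the displayed chain (obtained via Remark~\ref{rem: RFN equivalence between U and U+ psi}) together with Lemma~\ref{lem: relationship between worms and alpha iterated reflection} yields the corollary, and your three steps unpack exactly this. Your explicit first step, passing from $1\text{-}\con(\is_n)$ to $(\la 1\ra\la n{+}1\ra\top)_{\ea}$ via Leivant--Beklemishev and noting that both sides already contain $\ea^+$, is left implicit in the paper but is indeed the intended bridge.
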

The $\Pi_2$-conservativity is of interest to us since any assertion of the form $f{\downarrow}$ of interest to us is expressed in a $\Pi_2$-sentence. We will make a roundabout way to get the Hardy functions by first jumping to the so-called \textit{fast growing functions,} which are a bit more closely connected to the corresponding iterated $\Pi_2$-reflection principles.
\begin{definition}
    The fast growing hierarchy is defined as follows:
    \begin{itemize}
        \item $F_0(x) = 2^x _x$;
        \item $F_{\alpha +1}(x) = F_\alpha ^{(x)}(x)$;
        \item $F_\lambda(x) = F_{\lambda[x]}(x).$
    \end{itemize}
\end{definition}

This fast-growing hierarchy can be compared to the Hardy hierarchy via the following.
The proof by transfinite induction is standard and can be formalized in $\ea$ along the lines of the proof of Theorem \ref{thm: Hardy on ordinals give hardy on tree ordinals}.

\begin{lemma}[EA]\label{lem: Fast growing bounding hardy functions}
If $F_\alpha{\downarrow} $, then $H_{\omega^{3+\alpha}} {\downarrow}$ and $F_\alpha (x) \leq H_{\omega^{3+\alpha}}(x+3) \leq F_\alpha (x+4)$ for every ordinal $\alpha$. 
\end{lemma}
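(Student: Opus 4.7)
The plan is to proceed by transfinite induction on $\alpha$, formalized in $\ea$ using evaluation sequences exactly as in the proof of Theorem~\ref{thm: Hardy on ordinals give hardy on tree ordinals}. The statements ``$F_\alpha{\downarrow}$'' and ``$H_{\omega^{3+\alpha}}{\downarrow}$ with value in the prescribed interval'' are both $\Pi_2$, and the idea is to show that given an evaluation sequence witnessing $F_\alpha(x+4) = M$ one can construct an evaluation sequence witnessing $H_{\omega^{3+\alpha}}(x+3)=N$ with $F_\alpha(x)\leq N \leq M$, by bounded induction on the former sequence. The shift $3+\alpha$ rather than $\alpha$ on the ordinal side is precisely what leaves room to absorb the super-exponential jump from the standard Hardy starting point $H_0(x)=x$ to the paper's $F_0(x)=2^x_x$.

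For the base case $\alpha=0$, I would unfold $H_{\omega^3}(x+3) = H_{\omega^2\cdot(x+3)}(x+3)$ via the limit clause, obtaining after $x+3$ steps a composition essentially of the form $H_{\omega^2}^{(x+3)}(x+3)$. A short direct computation gives $H_\omega(y)=2y$ and hence $H_{\omega^2}(y)\approx 2^y y$; iterating, one sees that $H_{\omega^3}(x+3)$ is essentially a tower of $x+3$ exponentials starting at $x+3$, sandwiched between $F_0(x)=2^x_x$ and $F_0(x+4)=2^{x+4}_{x+4}$ by monotonicity of $F_0$.

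For the successor step I would use that $\omega^{3+\alpha+1}[k]=\omega^{3+\alpha}\cdot k$, so that
\[
H_{\omega^{3+\alpha+1}}(x+3) \;=\; H_{\omega^{3+\alpha}\cdot(x+3)}(x+3)
\]
unfolds into $x+3$ successive applications of $H_{\omega^{3+\alpha}}$ on argument near $x+3$. Applying the induction hypothesis to each copy and using monotonicity of $F_\alpha$, the composition lies between $F_\alpha^{(x)}(x)=F_{\alpha+1}(x)$ and $F_\alpha^{(x+4)}(x+4)=F_{\alpha+1}(x+4)$. For the limit case $\lambda$, one exploits $(3+\lambda)[k]=3+\lambda[k]$ to write $H_{\omega^{3+\lambda}}(x+3)=H_{\omega^{3+\lambda[x+3]}}(x+3)$ and $F_\lambda(x)=F_{\lambda[x]}(x)$; the induction hypothesis at $\lambda[x+3]$, combined with monotonicity of the fundamental sequences (so that $F_{\lambda[x]}(x)\leq F_{\lambda[x+3]}(x)$), delivers both inequalities.

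The hard part will be the bookkeeping of the additive constants $+3$ and $+4$ through iteration: they must be chosen just tight enough that the gap $[F_\alpha(x), F_\alpha(x+4)]$ propagates verbatim through both the successor and the limit steps, rather than drifting with $\alpha$. I expect the formalization in $\ea$ to follow, as announced after the statement, by replacing the transfinite induction on $\alpha$ with a bounded induction on the evaluation sequences, exactly in the style of Theorem~\ref{thm: Hardy on ordinals give hardy on tree ordinals}, so that only $\Delta_0$-induction is actually invoked and every quantifier is bounded by the length of the ambient evaluation sequence.
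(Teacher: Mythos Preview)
Your approach is exactly what the paper does: it merely states that the proof is a standard transfinite induction, formalized in $\ea$ via bounded induction on evaluation sequences as in Theorem~\ref{thm: Hardy on ordinals give hardy on tree ordinals}, and you have correctly spelled this out.

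One small slip to fix in the limit step: the identity $(3+\lambda)[k]=3+\lambda[k]$ is not literally true when $\lambda=\omega$, since then $3+\lambda=\omega$ and $(3+\omega)[k]=k$ while $3+\omega[k]=3+k$. Concretely, $H_{\omega^{3+\omega}}(x+3)=H_{\omega^\omega}(x+3)=H_{\omega^{x+3}}(x+3)$, so the induction hypothesis you need here is at $\alpha=x$ (giving bounds on $H_{\omega^{3+x}}=H_{\omega^{x+3}}$), not at $\lambda[x+3]=x+3$; the two only coincide once $\lambda>\omega$. With that adjustment the bounds go through, using $F_\omega(x)=F_x(x)$ and $F_\omega(x+4)=F_{x+4}(x+4)\geq F_x(x+4)$.
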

Additionally, the fast growing functions correspond to the iterated reflection principles in the following manner \cite{Beklemishev:2003:ProofTheoreticAnalysisByIteratedReflection}:

\begin{theorem}[$\ea ^+$]\label{thm: itterated reflection to totality of F}
For every ordinal $\alpha < \varepsilon_0$ we have $ \prfn{2}{\alpha}(\ea ^+) \equiv \ea + \{ F_\beta {\downarrow} : \beta < \alpha\} $.
\end{theorem}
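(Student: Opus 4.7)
The plan is to prove the equivalence by transfinite induction on $\alpha < \varepsilon_0$, establishing each direction separately but letting them draw on a common induction hypothesis, and working throughout inside $\ea^+$.

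For the forward direction $\prfn{2}{\alpha}(\ea^+) \vdash F_\beta{\downarrow}$ for all $\beta < \alpha$, the driving lemma is Lemma~\ref{lem: f^x (x) halts iff 1-con that f halts}. At a successor stage $\alpha = \gamma + 1$ the inductive hypothesis gives $\prfn{2}{\gamma}(\ea^+) \vdash F_\delta{\downarrow}$ for every $\delta < \gamma$; moving to $\prfn{2}{\gamma+1}(\ea^+)$ adds $\Pi_2$-reflection over this, which applied to the non-decreasing, elementarily-presented function $f = F_{\gamma-1}$ yields the totality of $\lambda x.\, f^{(x)}(x)$, i.e.\ $F_\gamma{\downarrow}$. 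The boundary case $\gamma = 0$ is handled inside $\ea^+$ via Corollary~\ref{cor: ea^+ is equiv to 1-con(ea)}, noting $F_0(x) = 2^x_x$. At a limit $\lambda$, any $\beta < \lambda$ lies below some $\delta + 1 \leq \lambda$, so $F_\beta{\downarrow}$ is already provable in $\prfn{2}{\delta+1}(\ea^+) \subseteq \prfn{2}{\lambda}(\ea^+)$.

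For the converse direction $\ea + \{F_\beta{\downarrow} : \beta < \alpha\} \vdash \prfn{2}{\alpha}(\ea^+)$, the strategy is an ordinal analysis of iterated reflection. Concretely, I plan to show that every $\Pi_2$-sentence $\forall x\, \exists y\, \varphi(x,y)$ provable in $\prfn{2}{\gamma}(\ea^+)$ (for $\gamma < \alpha$) admits a witness $y$ majorized by $F_{\gamma + c}(x)$, where $c$ is elementary in the Gödel number of the derivation. Granted this, assuming $\{F_\beta{\downarrow} : \beta < \alpha\}$ one verifies each instance of $\Pi_2$-reflection for $\prfn{2}{\gamma}(\ea^+)$ internally, by computing $F_{\gamma+c}(x)$ and performing a bounded search for the witness $y$. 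The requisite bound is established through cut-elimination in an infinitary proof system associated with iterated $\Pi_2$-reflection, whose ordinal assignment tracks the $\alpha$-iteration through fundamental sequences.

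The main obstacle is clearly the backward direction: it is not self-contained from the lemmas introduced earlier in the paper, and a fully detailed proof would require writing out a Tait-style infinitary calculus together with a norm/bound argument on derivations. My plan is therefore to reduce to the corresponding result of \cite{Beklemishev:2003:ProofTheoreticAnalysisByIteratedReflection}, matching conventions (fundamental sequences, base theory $\ea^+$, indexing of $F_\beta$ with $F_0 = 2^x_x$) to the formulation here, and to verify the small-$\alpha$ base cases directly through Corollary~\ref{cor: ea^+ is equiv to 1-con(ea)} so that the indexing aligns. The forward direction, by contrast, requires nothing beyond the tools already developed in the preliminaries.
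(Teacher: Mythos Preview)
The paper does not give its own proof of this theorem: it is quoted directly from \cite{Beklemishev:2003:ProofTheoreticAnalysisByIteratedReflection} and used as a black box. Your plan for the backward direction is, in the end, to reduce to that same reference after matching conventions, so on that half you are in agreement with the paper's treatment.

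Your sketch of the forward direction, however, has a genuine gap. At a successor stage $\alpha = \gamma + 1$ you set $f = F_{\gamma - 1}$ and apply Lemma~\ref{lem: f^x (x) halts iff 1-con that f halts} to obtain $F_\gamma{\downarrow}$. This is fine when $\gamma$ is itself a successor, but when $\gamma$ is a limit there is no $F_{\gamma-1}$, and $F_\gamma(x) = F_{\gamma[x]}(x)$ is not of the form $\lambda x.\, f^{(x)}(x)$ for any single $f$ to which the lemma applies. The correct argument at such stages is different: one shows (uniformly in $\beta$, with an elementary proof-bound) that $\prfn{2}{\gamma}(\ea^+)$ proves $F_\beta{\downarrow}$ for each $\beta < \gamma$; then $\Pi_2$-reflection over $\prfn{2}{\gamma}(\ea^+)$, which is exactly what $\prfn{2}{\gamma+1}(\ea^+)$ adds, yields the \emph{internal} universal statement $\forall \beta {<} \gamma\ F_\beta{\downarrow}$, and in particular $\forall x\ F_{\gamma[x]}(x){\downarrow}$, i.e.\ $F_\gamma{\downarrow}$. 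So the forward direction is not quite as self-contained as you claim: it needs the uniformity of the induction hypothesis, not just its instance-by-instance truth, and Lemma~\ref{lem: f^x (x) halts iff 1-con that f halts} alone does not cover the limit-$\gamma$ case.
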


Therefore from $1\text{-}\con(\is_n)$, we can assert that the corresponding $H_{\omega^\alpha}$ functions are total and hence, through our theorems comparing the verious hardy functions, we can prove $\ewd^{n+1}$:
\begin{theorem}\label{theorem:fineStructureForWorms}
$\ea + 1\text{-}\con(\is_n) \vdash \ewd^{n+1}.$
\end{theorem}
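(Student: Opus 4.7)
The plan is to reduce the theorem to proving $\ewd^{n+1}$ inside the theory $\prfn{2}{o(0n)}(\ea^+)$ and then invoke the $\Pi_2$-conservativity asserted by Corollary \ref{cor: from ISn to alpha itterated reflection}; this suffices because $\ewd^{n+1}$ is itself a $\Pi_2$-sentence of the form $\forall A\,(A\in\w^{n+1}\to\exists k\, A_k=\top)$. The case $n=0$ is anyway routine since $\ewd^1$ merely asserts that worms $0^k$ die.

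Working inside $\prfn{2}{o(0n)}(\ea^+)$, we first note that $o(0n)=\omega_n+1$, so Theorem \ref{thm: itterated reflection to totality of F} furnishes the $\Pi_2$-axiom $F_{\omega_n}\downarrow$. Feeding this through Lemma \ref{lem: Fast growing bounding hardy functions} and using $3+\omega_n=\omega_n$ for $n\geq 1$, we obtain $H_{\omega_{n+1}}\downarrow$. We then argue uniformly that this single totality statement forces every $A\in\w^{n+1}$ to die, in four steps. Step one: an external induction on the decomposition $A=0^{k_0}A_1^+0^{k_1}\ldots A_m^+0^{k_m}$, formalizable in $\ea$, shows $o(A)<\omega_{n+1}$, because each $A_i\in\w^n$ inductively satisfies $o(A_i)<\omega_n$ and $\omega^{\omega_n}$ is additively principal. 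Step two: Theorem \ref{thm: Hardy on ordinals give hardy on tree ordinals}, applied with $t=\tau(A)$ and $\beta=\omega_{n+1}$ and $m$ chosen elementarily large in $A$ through the values $N(o(\tau(A)))$ and $Cr(\tau(A))$, yields $H_{\tau(A)}(n)\downarrow$ for every $n$, with the needed $H_{\omega_{n+1}}(m)\downarrow$ provided by the totality of $H_{\omega_{n+1}}$. Step three: Theorem \ref{thm: hardy on tree ordinals bounds worm based hardy} upgrades $H_{\tau(A)}\downarrow$ to $h_{\tau(A)}\downarrow$. Step four: Definition \ref{def: hardy on worm based tree ordinals} translates $h_{\tau(A)}\downarrow$ to $h_A\downarrow$, and since $h_A(1)$ is precisely the least $k$ for which $A_k=\top$ in the $\ewd$ rewrite sequence, we conclude $\exists k\, A_k=\top$.

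The main obstacle is the uniform formalization inside $\ea$: each of the four steps must go through with parameters (norms, evaluation sequences, correction-function values) that are elementary in the code of $A$, so that the chain $H_{\omega_{n+1}}\downarrow\leadsto H_{o(A)}\downarrow\leadsto H_{\tau(A)}\downarrow\leadsto h_{\tau(A)}\downarrow\leadsto h_A\downarrow$ collapses into a single $\Pi_2$-statement about $A$. The delicate hinge is choosing $m$ in Theorem \ref{thm: Hardy on ordinals give hardy on tree ordinals} as an elementary function of $A$ through $N$ and $Cr$; once that is in place, the totality hypothesis $H_{\omega_{n+1}}\downarrow$ supplies all required instances $H_{\omega_{n+1}}(m)\downarrow$, and the passage from $h_A\downarrow$ to $\exists k\, A_k=\top$ is a routine unfolding of the definitions that avoids any appeal to induction beyond what is already available in $\ea$.
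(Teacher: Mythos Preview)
Your proposal is correct and follows essentially the same route as the paper: pass through Corollary~\ref{cor: from ISn to alpha itterated reflection} and Theorem~\ref{thm: itterated reflection to totality of F} to obtain totality of the relevant $F_\alpha$, convert to Hardy functions via Lemma~\ref{lem: Fast growing bounding hardy functions}, and then use Theorems~\ref{thm: Hardy on ordinals give hardy on tree ordinals} and~\ref{thm: hardy on tree ordinals bounds worm based hardy} together with Definition~\ref{def: hardy on worm based tree ordinals} to reach $h_A{\downarrow}$ for all $A\in\w^{n+1}$. The only cosmetic difference is that you extract the single statement $F_{\omega_n}{\downarrow}$ (hence $H_{\omega_{n+1}}{\downarrow}$) and apply Theorem~\ref{thm: Hardy on ordinals give hardy on tree ordinals} with the fixed $\beta=\omega_{n+1}$, whereas the paper works with the schemas $\forall\alpha{<}\omega_n\,F_\alpha{\downarrow}$ and $\forall\alpha{<}\omega_{n+1}\,H_\alpha{\downarrow}$; this is a packaging choice, not a different argument.
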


\begin{proof}
By Corollary \ref{cor: from ISn to alpha itterated reflection} and Theorem \ref{thm: itterated reflection to totality of F},
\[
\ea + 1\textnormal{-} {\sf Con}(\is_n) \equiv_1 \ea+1\textnormal{-} {\sf Con}(\forall \alpha< \omega_n \ F_\alpha {\downarrow}).
\]
As the assertion of totality of the fast growing functions and the Hardy functions respectively are $\Pi_2$-sentences, and since $1\textnormal{-} {\sf Con}$ is equivalent to $\Pi_2$-reflection, we have that 
\[
\ea + 1\textnormal{-} {\sf Con}(\forall \alpha< \omega_n \ F_\alpha {\downarrow}) \vdash \forall \alpha< \omega_n \ F_\alpha {\downarrow}
\]
and consequently, using Lemma \ref{lem: Fast growing bounding hardy functions} we obtain
\[
\ea + 1\textnormal{-}\con(\is_n) \vdash \forall \alpha<\omega_n \ H_{\omega^\alpha} {\downarrow}.
\]
Then by Theorems \ref{thm: Hardy on ordinals give hardy on tree ordinals}, \ref{thm: hardy on tree ordinals bounds worm based hardy} and Definition \ref{def: hardy on worm based tree ordinals} give the following implications over $\ea$:
\[
\forall \alpha<\omega_n \ H_{\omega^\alpha} {\downarrow} \vdash \forall \alpha<\omega_{n+1} \ H_{\alpha} {\downarrow} \vdash \forall\alpha<\omega_{n+1} \ h_{\alpha} {\downarrow} \vdash \forall A\in W^{n+1} \ h_{A} {\downarrow}.
\]
The last one implying $\ewd^{n+1}$ by definition over $\ea$.
\end{proof}

\section{Concluding remarks}

We have shown that $\glp_\Lambda$ is sound for the transfinite notions of provability studied by Beklemishev and Pakhomov~\cite{BeklemishevPakhomov:2019:GLPforTheoriesOfTruth}, and with this we have shown that a natural estension of the {\em Every Worm Dies} principle is independent of $\aca$.
Likewise, we have shown that restricted versions of this principle are equivalent to the theories $\is_n$.
The proof of the latter required a detour through Hardy functions and fast-growing hierarchies, in particular yielding a non-trivial comparison between Hardy functions based on ordinals and those based on tree ordinals which should be of independent interest.

Stronger theories of second order arithmetic should also be proof-theoretically equivalent to reflection up to a suitable ordinal $\Lambda$.
These equivalences may then be used to provide new variants of $\ewd$ independent of stronger theories of second order arithmetic, including theories related to transfinite induction or iterated comprehension.
We expect that this work will be an important step in this direction.

%
%
%
\bibliographystyle{splncs04}
\bibliography{References}
%




\end{document}